\let\mathcaltmp\mathcal
\let\mathcal\mathscr
\let\mathscr\mathcaltmp
\newtheoremstyle{plain}
  {.5\baselineskip\@plus.2\baselineskip\@minus.2\baselineskip}
  {.5\baselineskip\@plus.2\baselineskip\@minus.2\baselineskip\@plus.5em}
  {\slshape}
  {}
  {\bfseries}
  {.}
  { }
  {}
\newtheoremstyle{definition}
  {.5\baselineskip\@plus.2\baselineskip\@minus.2\baselineskip}
  {0.8\baselineskip\@plus.2\baselineskip\@minus.2\baselineskip\@plus.5em}
  {}
  {}
  {\bfseries}
  {.}
  { }
  {}
\newcommand{\eqnum}{\refstepcounter{equation}\textup{\tagform@{\theequation}}}
\makeatletter \@addtoreset{equation}{section} \makeatother
\renewcommand{\theequation}{\thesection.\arabic{equation}}
\newtheorem{thm}[equation]{Theorem}
\newtheorem*{thm*}{Theorem}
\newtheorem{lem}[equation]{Lemma}
\newtheorem{cor}[equation]{Corollary}
\newtheorem{prop}[equation]{Proposition}
\newtheorem*{defthm*}{Definition/Theorem}
\theoremstyle{definition}
\newtheorem{defn}[equation]{Definition}
\newtheorem{rem}[equation]{Remark}
\newtheorem{exam}[equation]{Example}
\newtheorem{notat}[equation]{Notation}
\newtheorem*{exam*}{Example}
\newcommand\arXiv[1]{\href{http://arxiv.org/abs/#1}{arXiv:#1}}
\newcommand{\changelocaltocdepth}[1]{%
  \addtocontents{toc}{\protect\setcounter{tocdepth}{#1}}%
  \setcounter{tocdepth}{#1}}
\newcommand{\nc}{\newcommand}
\nc{\renc}{\renewcommand}
\nc{\ssec}{\subsection}
\nc{\sssec}{\subsubsection}
\nc{\on}{\operatorname}
\nc{\term}[1]{#1\xspace}
\tikzset{
  commutative diagrams/.cd,
  arrow style=tikz,
  diagrams={>=latex}}
\tikzset{
  column sep/.code=\def\pgfmatrixcolumnsep{\pgf@matrix@xscale*(#1)},
  row sep/.code   =\def\pgfmatrixrowsep{\pgf@matrix@yscale*(#1)},
  matrix xscale/.code=%
    \pgfmathsetmacro\pgf@matrix@xscale{\pgf@matrix@xscale*(#1)},
  matrix yscale/.code=%
    \pgfmathsetmacro\pgf@matrix@yscale{\pgf@matrix@yscale*(#1)},
  matrix scale/.style={/tikz/matrix xscale={#1},/tikz/matrix yscale={#1}}}
\def\pgf@matrix@xscale{1}
\def\pgf@matrix@yscale{1}
\setlist[enumerate,1]{label={(\alph*)},itemsep=\parskip}
\newlist{enumcompress}{enumerate}{1}
\setlist[enumcompress,1]{
  label={(\alph*)},
  itemsep=0.3\parskip,
  leftmargin=*,
  align=left,
  topsep=0em
}
\newlist{thmlist}{enumerate}{2}
\setlist[thmlist,1]{
  label={\em(\roman*)}, ref={(\roman*)},
  itemsep=0.5em,
  align=right,widest=vi)}
\setlist[thmlist,2]{
  label={\em(\alph*)}, ref={(\alph*)},
  itemsep=0.75em,
  labelsep=0em,labelindent=0em,leftmargin=*,align=left,widest=vi),
  topsep=0.75em}
\newlist{thmlistbis}{enumerate}{1}
\setlist[thmlistbis,1]{
  label={\em(\roman*~\textit{bis})},
  ref={(\roman*}~\textit{bis}\upshape{)},
  itemsep=0.5em,
  align=right, widest=vi)}
\newlist{defnlist}{enumerate}{2}
\setlist[defnlist,1]{
  label={(\roman*)}, ref={(\roman*)},
  itemsep=0.5em,
  align=right, widest=vi)}
\setlist[defnlist,2]{
  label={(\alph*)}, ref={(\alph*)},
  itemsep=0.75em,
  labelsep=0em,labelindent=0em,leftmargin=*,align=left,widest=vi),
  topsep=0.75em}
\newlist{inlinelist}{enumerate*}{1}
\setlist[inlinelist,1]{label={(\alph*)}}
\newlist{inlinedefnlist}{enumerate*}{1}
\definecolor{green}{HTML}{38550C}
\setlist[inlinedefnlist,1]{label={\color{green}(\roman*)}}
\nc{\cA}{\ensuremath{\mathcal{A}}\xspace}
\nc{\cB}{\ensuremath{\mathcal{B}}\xspace}
\nc{\cC}{\ensuremath{\mathcal{C}}\xspace}
\nc{\cD}{\ensuremath{\mathcal{D}}\xspace}
\nc{\cE}{\ensuremath{\mathcal{E}}\xspace}
\nc{\cF}{\ensuremath{\mathcal{F}}\xspace}
\nc{\cG}{\ensuremath{\mathcal{G}}\xspace}
\nc{\cH}{\ensuremath{\mathcal{H}}\xspace}
\nc{\cI}{\ensuremath{\mathcal{I}}\xspace}
\nc{\cJ}{\ensuremath{\mathcal{J}}\xspace}
\nc{\cK}{\ensuremath{\mathcal{K}}\xspace}
\nc{\cL}{\ensuremath{\mathcal{L}}\xspace}
\nc{\cM}{\ensuremath{\mathcal{M}}\xspace}
\nc{\cN}{\ensuremath{\mathcal{N}}\xspace}
\nc{\cO}{\ensuremath{\mathcal{O}}\xspace}
\nc{\cP}{\ensuremath{\mathcal{P}}\xspace}
\nc{\cQ}{\ensuremath{\mathcal{Q}}\xspace}
\nc{\cR}{\ensuremath{\mathcal{R}}\xspace}
\nc{\cS}{\ensuremath{\mathcal{S}}\xspace}
\nc{\cT}{\ensuremath{\mathcal{T}}\xspace}
\nc{\cU}{\ensuremath{\mathcal{U}}\xspace}
\nc{\cV}{\ensuremath{\mathcal{V}}\xspace}
\nc{\cW}{\ensuremath{\mathcal{W}}\xspace}
\nc{\cX}{\ensuremath{\mathcal{X}}\xspace}
\nc{\cY}{\ensuremath{\mathcal{Y}}\xspace}
\nc{\cZ}{\ensuremath{\mathcal{Z}}\xspace}
\nc{\sA}{\ensuremath{\mathscr{A}}\xspace}
\nc{\sB}{\ensuremath{\mathscr{B}}\xspace}
\nc{\sC}{\ensuremath{\mathscr{C}}\xspace}
\nc{\sD}{\ensuremath{\mathscr{D}}\xspace}
\nc{\sE}{\ensuremath{\mathscr{E}}\xspace}
\nc{\sF}{\ensuremath{\mathscr{F}}\xspace}
\nc{\sG}{\ensuremath{\mathscr{G}}\xspace}
\nc{\sH}{\ensuremath{\mathscr{H}}\xspace}
\nc{\sI}{\ensuremath{\mathscr{I}}\xspace}
\nc{\sJ}{\ensuremath{\mathscr{J}}\xspace}
\nc{\sK}{\ensuremath{\mathscr{K}}\xspace}
\nc{\sL}{\ensuremath{\mathscr{L}}\xspace}
\nc{\sM}{\ensuremath{\mathscr{M}}\xspace}
\nc{\sN}{\ensuremath{\mathscr{N}}\xspace}
\nc{\sO}{\ensuremath{\mathscr{O}}\xspace}
\nc{\sP}{\ensuremath{\mathscr{P}}\xspace}
\nc{\sQ}{\ensuremath{\mathscr{Q}}\xspace}
\nc{\sR}{\ensuremath{\mathscr{R}}\xspace}
\nc{\sS}{\ensuremath{\mathscr{S}}\xspace}
\nc{\sT}{\ensuremath{\mathscr{T}}\xspace}
\nc{\sU}{\ensuremath{\mathscr{U}}\xspace}
\nc{\sV}{\ensuremath{\mathscr{V}}\xspace}
\nc{\sW}{\ensuremath{\mathscr{W}}\xspace}
\nc{\sX}{\ensuremath{\mathscr{X}}\xspace}
\nc{\sY}{\ensuremath{\mathscr{Y}}\xspace}
\nc{\sZ}{\ensuremath{\mathscr{Z}}\xspace}
\nc{\bA}{\ensuremath{\mathbf{A}}\xspace}
\nc{\bB}{\ensuremath{\mathbf{B}}\xspace}
\nc{\bC}{\ensuremath{\mathbf{C}}\xspace}
\nc{\bD}{\ensuremath{\mathbf{D}}\xspace}
\nc{\bE}{\ensuremath{\mathbf{E}}\xspace}
\nc{\bF}{\ensuremath{\mathbf{F}}\xspace}
\nc{\bG}{\ensuremath{\mathbf{G}}\xspace}
\nc{\bH}{\ensuremath{\mathbf{H}}\xspace}
\nc{\bI}{\ensuremath{\mathbf{I}}\xspace}
\nc{\bJ}{\ensuremath{\mathbf{J}}\xspace}
\nc{\bK}{\ensuremath{\mathbf{K}}\xspace}
\nc{\bL}{\ensuremath{\mathbf{L}}\xspace}
\nc{\bM}{\ensuremath{\mathbf{M}}\xspace}
\nc{\bN}{\ensuremath{\mathbf{N}}\xspace}
\nc{\bO}{\ensuremath{\mathbf{O}}\xspace}
\nc{\bP}{\ensuremath{\mathbf{P}}\xspace}
\nc{\bQ}{\ensuremath{\mathbf{Q}}\xspace}
\nc{\bR}{\ensuremath{\mathbf{R}}\xspace}
\nc{\bS}{\ensuremath{\mathbf{S}}\xspace}
\nc{\bT}{\ensuremath{\mathbf{T}}\xspace}
\nc{\bU}{\ensuremath{\mathbf{U}}\xspace}
\nc{\bV}{\ensuremath{\mathbf{V}}\xspace}
\nc{\bW}{\ensuremath{\mathbf{W}}\xspace}
\nc{\bX}{\ensuremath{\mathbf{X}}\xspace}
\nc{\bY}{\ensuremath{\mathbf{Y}}\xspace}
\nc{\bZ}{\ensuremath{\mathbf{Z}}\xspace}
\nc{\bbA}{\ensuremath{\mathbb{A}}\xspace}
\nc{\bbB}{\ensuremath{\mathbb{B}}\xspace}
\nc{\bbC}{\ensuremath{\mathbb{C}}\xspace}
\nc{\bbD}{\ensuremath{\mathbb{D}}\xspace}
\nc{\bbE}{\ensuremath{\mathbb{E}}\xspace}
\nc{\bbF}{\ensuremath{\mathbb{F}}\xspace}
\nc{\bbG}{\ensuremath{\mathbb{G}}\xspace}
\nc{\bbH}{\ensuremath{\mathbb{H}}\xspace}
\nc{\bbI}{\ensuremath{\mathbb{I}}\xspace}
\nc{\bbJ}{\ensuremath{\mathbb{J}}\xspace}
\nc{\bbK}{\ensuremath{\mathbb{K}}\xspace}
\nc{\bbL}{\ensuremath{\mathbb{L}}\xspace}
\nc{\bbM}{\ensuremath{\mathbb{M}}\xspace}
\nc{\bbN}{\ensuremath{\mathbb{N}}\xspace}
\nc{\bbO}{\ensuremath{\mathbb{O}}\xspace}
\nc{\bbP}{\ensuremath{\mathbb{P}}\xspace}
\nc{\bbQ}{\ensuremath{\mathbb{Q}}\xspace}
\nc{\bbR}{\ensuremath{\mathbb{R}}\xspace}
\nc{\bbS}{\ensuremath{\mathbb{S}}\xspace}
\nc{\bbT}{\ensuremath{\mathbb{T}}\xspace}
\nc{\bbU}{\ensuremath{\mathbb{U}}\xspace}
\nc{\bbV}{\ensuremath{\mathbb{V}}\xspace}
\nc{\bbW}{\ensuremath{\mathbb{W}}\xspace}
\nc{\bbX}{\ensuremath{\mathbb{X}}\xspace}
\nc{\bbY}{\ensuremath{\mathbb{Y}}\xspace}
\nc{\bbZ}{\ensuremath{\mathbb{Z}}\xspace}
\nc{\mrm}[1]{\ensuremath{\mathrm{#1}}\xspace}
\nc{\mit}[1]{\ensuremath{\mathit{#1}}\xspace}
\nc{\mbf}[1]{\ensuremath{\mathbf{#1}}\xspace}
\nc{\mcal}[1]{\ensuremath{\mathcal{#1}}\xspace}
\nc{\msc}[1]{\ensuremath{\mathscr{#1}}\xspace}
\nc{\sub}{\subseteq}
\nc{\too}{\longrightarrow}
\nc{\hook}{\hookrightarrow}
\nc{\hooklongrightarrow}{\lhook\joinrel\longrightarrow}
\nc{\hooklong}{\hooklongrightarrow}
\nc{\hooklongleftarrow}{\longleftarrow\joinrel\rhook}
\nc{\twoheadlongrightarrow}{\relbar\joinrel\twoheadrightarrow}
\nc{\longrightleftarrows}{\ \raisebox{0.3ex}{\(\mathrel{\substack{\xrightarrow{\rule{1em}{0em}} \\[-1ex] \xleftarrow{\rule{1em}{0em}}}}\)}\ }
\renc{\ge}{\geqslant}
\renc{\le}{\leqslant}
\nc{\id}{\mathrm{id}}
\DeclareMathOperator{\rk}{\mathrm{rk}}
\DeclareMathOperator{\Hom}{\on{Hom}}
\nc{\uHom}{\underline{\smash{\Hom}}}
\DeclareMathOperator{\End}{\on{End}}
\DeclareMathOperator{\Sym}{\on{Sym}}
\nc{\uEnd}{\underline{\smash{\End}}}
\nc{\colim}{\varinjlim}
\renc{\lim}{\varprojlim}
\nc{\Cofib}{\on{Cofib}}
\nc{\Fib}{\on{Fib}}
\nc{\initial}{\varnothing}
\nc{\op}{\mathrm{op}}
\DeclareMathOperator*{\fibprod}{\times}
\renc{\setminus}{\smallsetminus}
\newcommand\restr[2]{{
  \left.\kern-\nulldelimiterspace 
  #1 
  \vphantom{\big|} 
  \right|_{#2} 
  }}
\newcommand{\thmref}[1]{Theorem~\ref{#1}}
\newcommand{\secref}[1]{Sect.~\ref{#1}}
\newcommand{\ssecref}[1]{Subsect. ~\ref{#1}}
\newcommand{\lemref}[1]{Lemma~\ref{#1}}
\newcommand{\propref}[1]{Proposition~\ref{#1}}
\newcommand{\corref}[1]{Corollary~\ref{#1}}
\renewcommand{\eqref}[1]{(\ref{#1})}
\newcommand{\examref}[1]{Example~\ref{#1}}
\newcommand{\itemref}[1]{\ref{#1}}
\nc{\A}{\bA}
\renc{\P}{\bP}
\nc{\V}{\bV}
\nc{\Spec}{\on{Spec}}
\nc{\cl}{\mrm{cl}}
\nc{\D}{\on{\mbf{D}}}
\nc{\Ex}{\mrm{Ex}}
\nc{\gys}{\mrm{gys}}
\nc{\un}{\mbf{1}}
\nc{\vb}[1]{\langle{#1}\rangle}
\nc{\pr}{\mrm{pr}}
\nc{\unit}{\mrm{unit}}
\nc{\counit}{\mrm{counit}}
\nc{\DVect}{\on{DVect}}
\nc{\Perf}{\on{Perf}}
\nc{\SH}{\on{\mbf{SH}}}
\nc{\DM}{\on{\mbf{DM}}}
\nc{\MGL}{\mrm{MGL}}
\nc{\MGLmod}{\on{\mbf{D}_\MGL}}
\nc{\Nis}{\mrm{Nis}}
\nc{\et}{\mrm{\acute{e}t}}
\nc{\FT}{\on{FT}}
\nc{\Gm}{{\bG_{m}}}
\nc{\ev}{\mrm{ev}}
\nc{\coev}{\mrm{coev}}
\nc{\act}{\mrm{act}}
\nc{\tw}[1]{\{#1\}}
\nc{\Chom}{\on{C}_\bullet}
\nc{\BM}{{\mrm{BM}}}
\nc{\oH}{{\mrm{H}}}
\newcommand{\quo}[1]{{{}^\circ {#1}}}
\nc{\inftyCat}{\term{$\infty$-category}}
\nc{\inftyCats}{\term{$\infty$-categories}}
\nc{\inftyGrpd}{\term{$\infty$-groupoid}}
\nc{\inftyGrpds}{\term{$\infty$-groupoids}}
\title{The derived homogeneous Fourier transform}
\author[A.\,A. Khan]{Adeel A. Khan}
\date{2024-09-25}
\def\l@subsection{\@tocline{2}{0pt}{4pc}{6pc}{}}
\begin{document}

\begin{abstract}
  We study a derived version of Laumon's homogeneous Fourier transform, which exchanges $\Gm$-equivariant sheaves on a derived vector bundle and its dual.
  In this context, the Fourier transform exhibits a duality between derived and stacky phenomena.
  This is the first in a series of papers on derived microlocal sheaf theory.
\end{abstract}

\maketitle

\renewcommand\contentsname{\vspace{-1cm}}
\tableofcontents

\setlength{\parindent}{0em}
\parskip 0.75em


\changelocaltocdepth{1}

\section*{Introduction}

Let $E$ be a vector bundle over a scheme $S$.
Laumon \cite{Laumon} introduced a geometric Fourier transform
\begin{equation*}
  \D^{\Gm}(E; \overline{\bQ}_\ell)
  \to \D^{\Gm}(E^\vee; \overline{\bQ}_\ell)
\end{equation*}
on homogeneous (= $\Gm$-equivariant) bounded constructible derived categories of $\ell$-adic sheaves.
It can be regarded as a uniform version of the $\ell$-adic Fourier transform of Deligne (for base fields of characteristic $p>0$) and the $D$-module 
Fourier transform of Brylinski--Malgrange--Verdier (for base fields of characteristic $0$).

In this paper we describe an extension of this construction from vector bundles to \emph{derived vector bundles}, i.e., total spaces of perfect complexes.
The total space of a perfect complex exists naturally as a derived Artin stack, which can exhibit both derived and stacky behaviour depending on the amplitude of the complex.
For a complex of vector bundles concentrated in nonpositive degrees, its realization as a derived vector bundle is non-stacky but singular and derived; for a complex in nonnegative degrees it is smooth and underived but stacky.

\begin{center}
  \begin{tikzpicture}
    \draw[<->, thick] (-3,0) -- (3,0);
    \foreach \x in {-2,-1,0,1,2}
        \draw (\x,0.1) -- (\x,-0.1) node[below] {$\x$};

    \draw[red, ultra thick] (-3,0) -- (0,0);
    \node[above, red] at (-1.5,0.2) {derived};

    \draw[blue, ultra thick] (0,0) -- (3,0);
    \node[above, blue] at (1.5,0.2) {stacky};
  \end{tikzpicture}
\end{center}

So, in this context, the Fourier transform manifests a duality between derived and stacky phenomena.
What we will see is that, perhaps surprisingly, the fundamental properties of the homogeneous Fourier transform persist to the derived setting.
Most importantly, we have the involutivity property: for every derived vector bundle $E$ there is a canonical isomorphism
\begin{equation}
  \FT_{E^\vee} \circ \FT_E \simeq \id
\end{equation}
up to a twist.

For classical vector bundles, the proof of involutivity rests on the computation
\begin{equation}
  \FT_{E^\vee}(\bQ_{\ell})
  \simeq 0_{E,*}(\bQ_{\ell})(-r)[-2r]
\end{equation}
where $0_E : S \to E$ is the zero section and $r=\rk(E)$; in particular, the \emph{Fourier transform of the constant sheaf is supported on the zero section}.
For an arbitrary derived vector bundle $E$, where $0_E$ need not be a closed immersion (unless $E$ is of nonpositive amplitude), the functors $0_{E,*}$ and $0_{E,!}$ give two possible generalizations of this.
It turns out that the version that is both correct and relevant is the $!$-version: the canonical morphism
\begin{equation*}
  0_{E,!} 0_E^! (\FT_{E^\vee}(\bQ_{\ell})) \to \FT_{E^\vee}(\bQ_{\ell})
\end{equation*}
is always invertible, regardless of the amplitude of $E$.
We call this the \emph{cosupport} property; it is the key technical insight in this paper, from which involutivity and all other desired properties of $\FT_E$ follow.

Along the way we will also show that the homogeneous Fourier transform is well-behaved in the context of any reasonable six functor formalism.
More precisely, we will work in the generality of any topological weave \cite{Weaves}.
This includes classical six functor formalisms such as the derived \inftyCat of sheaves of abelian groups (over $\bC$) or the derived \inftyCat of $\ell$-adic sheaves (over a base on which $\ell$ is invertible), but also various motivic \inftyCats: Voevodsky motives, MGL-modules, or motivic spectra (not even orientability is required).

One consequence of derived Fourier duality is the existence of a canonical isomorphism in $\Gm$-equivariant Borel--Moore homology
\begin{equation}
  \oH^{\BM,\Gm}_*(E)
  \simeq \oH^{\BM,\Gm}_{*-2r}(E^\vee[-1])
\end{equation}
for every derived vector bundle $E \to S$ of rank $r$ (see \corref{cor:fouriso}).
This generalizes Kashiwara's Fourier isomorphism (\examref{exam:subtext}).
Following \cite{MirkovicRicheChern}, one expects that (when $S$ is a quasi-smooth derived scheme and $E$ is of amplitude $[-1,0]$) this is a decategorification of the \emph{linear Koszul duality} equivalence between the stable \inftyCats of $\Gm$-equivariant ind-coherent sheaves on $E$ and $E^\vee[-1]$ (see \cite{MirkovicRiche}, \cite[\S 4.6]{TodaCatLocal}).

The derived Fourier transform was conceived in late 2019, as part of a program on derived microlocal sheaf theory, and first announced in a talk at Kavli IPMU in February 2020.
The first version of this paper was written in January 2021, but the current proof using the cosupport property (whence the path to generalizing beyond the case discussed in \propref{prop:gluetwist}) was only discovered recently.
The argument using the cosupport property can also be applied to other sheaf-theoretic Fourier transforms, such as the version for monodromic sheaves in \cite{Wang}.
A derived version of Deligne's original $\ell$-adic Fourier transform is recently considered in \cite{FYZ}; our arguments give a simpler proof in that context as well.
They also apply to the derived version of the \emph{Fourier--Sato} transform \cite{KashiwaraSchapira}, which we consider in forthcoming joint work with T.~Kinjo \cite{dimredcoha}.

\ssec{Outline}

We begin in \secref{sec:main} by defining the homogeneous Fourier transform for derived vector bundles and stating our results about it.
In \secref{sec:easy} we prove the easier properties that are independent of involutivity.
Sections~\ref{sec:cosuppinvol}, \ref{sec:supp}, and \ref{sec:cosupp} achieve the proof of involutivity through the cosupport property.
Finally the remaining properties of the Fourier transform are derived from involutivity in \secref{sec:funct2}.
Appendix~\ref{sec:equivariant} recalls the contraction lemma and its consequences in our context.
In Appendix~\ref{sec:j} we redo some straightforward computations from \cite{Laumon} that don't involve derived vector bundles, adapting the arguments from \emph{op. cit.} to the generality we work in here.

\ssec{Acknowledgments}

I would like to Denis-Charles Cisinski for introducing me to sheaf-theoretic Fourier transforms and his suggestion to consider its motivic analogue during a seminar we coorganized in Fall 2018.
Thanks also to Charanya Ravi and Masoud Zargar for discussions during that seminar.
I am extremely grateful to Tasuki Kinjo for invaluable discussions about derived Fourier transforms and for his collaboration on derived microlocal sheaf theory in the complex-analytic context \cite{dimredcoha}; thanks to him in particular for pointing out the sign issue in \propref{prop:gluetwist}.
I also thank Tony Feng for his interest, questions, and feedback on the 2021 version of this paper.
Finally, I thank RIMS/Kyoto University for the ideal working conditions, where the new proof using the cosupport property was conceived.

I acknowledge support from the grants AS-CDA-112-M01 (Academia Sinica), NSTC 110-2115-M-001-016-MY3, and NSTC 112-2628-M-001-0062030.

\ssec{Conventions and notation}

  \sssec{Sheaves}

    Throughout the paper we fix a \emph{topological weave}, which is an axiomatization of a sheaf theory with the six functor formalism introduced in \cite{Weaves}.
    Roughly speaking this is a six-functor formalism with the following properties:
    \begin{enumcompress}
      \item\label{item:weave/loc}
      \emph{Localization:}
      The \inftyCat $\D(\initial)$ is zero.
      For any derived Artin stack $X$ and any closed-open decomposition $i : Z \to X$, $j : X\setminus Z \to X$, there is a canonical exact triangle of functors
      \begin{equation}\label{eq:loc}
        j_! j^* \to \id \to i_*i^*.
      \end{equation}

      \item\label{item:weave/htp}
      \emph{Homotopy invariance:}
      For any derived Artin stack $X$ and vector bundle $\pi : E \to X$, the unit morphism $\id \to \pi_*\pi^*$ is invertible.
    \end{enumcompress}
    Note that localization implies \emph{derived invariance}: for a derived Artin stack $X$, the inclusion of the classical truncation $X_\cl \hook X$ induces an equivalence $\D(X) \simeq \D(X_\cl)$ which commutes with each of the six functors.
    We also have \emph{Poincaré duality}, which for a topological weave gives a canonical isomorphism
    \begin{equation}\label{eq:Poin}
      f^!(-) \simeq f^*(-)\vb{T_f}
    \end{equation}
    for $f$ any smooth morphism with relative tangent bundle $f$.
    If $\D$ admits an orientation (as in the first five examples below), we may identify $\vb{T_f} \simeq \vb{d} := (d)[2d]$ where $d$ is the relative dimension of $f$.

    On (derived) schemes, examples of weaves are as follows:
    \begin{enumcompress}
      \item\emph{Betti sheaves (over $\bC$):}
      The assignment $X \mapsto \D(X)$ sending $X$ to the derived \inftyCat $\on{D}(X(\bC), \bZ)$ of sheaves of abelian groups on the topological space $X(\bC)$.

      \item\emph{Étale sheaves (finite coefficients, over $k$ with $n \in k^\times$):}
      The assignment $X \mapsto \D(X)$ sending $X$ to the derived \inftyCat $\on{D}_\et(X, \bZ/n\bZ)$ of sheaves of $\bZ/n\bZ$-modules on the small étale site of $X$.

      \item\emph{Étale sheaves ($\ell$-adic coefficients, over $k$ with $\ell \in k^\times$):}
      The assignment $X \mapsto \D(X)$ sending $X$ to the $\ell$-adic derived \inftyCat $\on{D}_\et(X, \bZ_\ell)$ of sheaves on the small étale site of $X$, i.e., the limit $\on{D}_\et(X, \bZ/\ell^n\bZ)$ over $n>0$.

      \item\label{item:weave/motives}\emph{Motives:}
      The assignment $X \mapsto \D(X)$ sending $X$ to the \inftyCat $\on{DM}(X) := \on{D_{H\bZ}}(X)$ of modules over the integral motivic Eilenberg--MacLane spectrum $H\bZ_X$.

      \item\emph{Cobordism motives:}
      The assignment $X\mapsto \D(X)$ sending $X$ to the \inftyCat $\MGLmod(X)$ of modules over Voevodsky's algebraic cobordism spectrum $\MGL_X$.

      \item\emph{Motivic spectra:}
      The assignment $X\mapsto \D(X)$ sending $X$ to the \inftyCat $\SH(X)$ of motivic spectra over $X$.
    \end{enumcompress}

  \sssec{Sheaves on stacks}

    When the weave satisfies étale descent, as in the first three examples, it can be extended to (derived) Artin stacks following the method of \cite{LiuZheng} (see also \cite[App.~A]{KhanVirtual}).
    In general, $\D$ only satisfies Nisnevich descent, so we need to restrict our attention to $\Nis$-Artin stacks (see \cite[\S 4]{Weaves}, \cite[\S 1]{equilisse}).
    For $\tau\in\{\Nis,\et\}$ we define $(\tau,n)$-Artin and $\tau$-Artin stacks as in \cite[0.2.2]{equilisse}:
    \begin{defnlist}
      \item
      A stack is $(\et,0)$-Artin, resp. $(\Nis,0)$-Artin, if it is an algebraic space (resp. a decent algebraic space).
      Here an algebraic space is \emph{decent} if it is Zariski-locally quasi-separated, or equivalently Nisnevich-locally a scheme.
      
      \item
      For $n>0$, $X$ is \emph{$(\tau,n)$-Artin} if it has $(\tau,n-1)$-representable diagonal and admits a smooth morphism $U \twoheadrightarrow X$ with $\tau$-local sections from some scheme $U$.
      A stack is \emph{$\tau$-Artin} if it is $(\tau,n)$-Artin for some $n$.
    \end{defnlist}
    For $\tau=\et$, these are the usual notions of $n$-Artin stacks and Artin stacks, while e.g. $(\Nis,1)$-Artin stacks are the same as quasi-separated $1$-Artin stacks with separated diagonal.

    If our chosen weave $\D$ does not satisfy étale descent, then we adopt the convention that \emph{Artin} means ``$\Nis$-Artin''.

  \sssec{\texorpdfstring{$\bG_m$}{Gm}-Equivariant sheaves}

    If $X$ is a derived Artin stack with $\Gm$-action, we define the \inftyCat of $\Gm$-equivariant sheaves on $X$ as the \inftyCat of sheaves on the quotient stack:
    \begin{equation*}
      \D^{\Gm}(X) := \D([X/\Gm]).
    \end{equation*}
    There is a forgetful functor
    \begin{equation*}
      \D^{\Gm}(X) \to \D(X)
    \end{equation*}
    defined by $*$-inverse image along the (smooth) quotient morphism $X \twoheadrightarrow [X/\Gm]$.

    For any $\Gm$-equivariant morphism $f : X \to Y$ the four operations $f^*$, $f_*$, $f_!$, $f^!$ are defined on the $\Gm$-equivariant category using the induced morphism $[X/\Gm] \to [Y/\Gm]$.
    Since each operation commutes with smooth $*$-inverse image, they each commute with forgetting equivariance.

    Note that, when the weave $\D$ does not satisfy étale descent, we are implicitly using the fact that the group scheme $\Gm$ is ``special'', so that $X \twoheadrightarrow [X/\Gm]$ admits Nisnevich-local sections and hence the quotients are $\Nis$-Artin whenever $X$ is.

  \sssec{Derived vector bundles}

    Let $S$ be a derived Artin stack.
    Given a perfect complex $\sE \in \Perf(S)$, we denote by $\V(\sE)$ the stack of cosections of $\sE$, or equivalently sections of $\sE^\vee$.
    That is, given a derived scheme $T$ over $S$, the $T$-points of $\V(\sE)$ over $S$ are morphisms $\sE|_T \to \sO_T$ in $\Perf(T)$.
    This agrees with Grothendieck's convention for vector bundles.

    \begin{defn}
      The stable \inftyCat $\DVect(S)$ of \emph{derived vector bundles} over $S$ is the essential image of the fully faithful functor $\sE \mapsto \V(\sE)$ from $\Perf(S)^\op$ to the \inftyCat of derived stacks over $S$ with $\bG_m$-action.
    \end{defn}

    Since $\sE \mapsto \V(\sE)$ is contravariant, we will adopt the following conventions:
    \begin{enumerate}
      \item If $E = \V(\sE)$, we write $E[n] := \V(\sE[-n])$ for every integer $n$.
      \item If $E = \V(\sE)$, we say $E$ is \emph{of amplitude $\ge 0$} (resp. $\le 0$, $[a,b]$) if $\sE$ is of Tor-amplitude $\le 0$ (resp. $\ge 0$, $[-b, -a]$) in homological grading.
      (Since $\sE$ is perfect, $E$ is of some finite amplitude.)
    \end{enumerate}

    Now that these are in place, we will avoid making any further reference to the construction $\sE \mapsto \V(\sE)$ (so that it makes no difference whether we took $\sE$ or $\sE^\vee$).

    \begin{notat}
      Given a derived vector bundle $E$ over a derived Artin stack $S$, we denote by $\pi_E : E \to S$ the projection and $0_E : S \to E$ the zero section.
    \end{notat}

    \begin{notat}
      For every $E \in \DVect(S)$, it will be convenient to denote the quotient by the $\bG_m$-scaling action by:
      \begin{equation}
        \quo{E} := [E/\bG_m].
      \end{equation}
      Given a morphism of derived vector bundles $\phi : E' \to E$, we also write $\quo{\phi} : \quo{E'} \to \quo{E}$ for the induced morphism.
      We will also use the same notation for $\bG_m$-invariant subsets of $E$, e.g. $\quo{\bG_{m,S}} = S$.
      Note that points of $\quo{E}$ are pairs $(L,\phi)$ where $L$ is a line bundle and $\phi : L \to E^\vee$ is a morphism of derived vector bundles.
    \end{notat}

    If the weave $\D$ does not satisfy étale descent, we need the following:

    \begin{prop}
      Let $S$ be a derived stack and $E \in \DVect(S)$ a derived vector bundle over $S$.
      If $E$ is of amplitude $\le 0$, then the projection $\pi_E : E \to S$ is affine.
      If $E$ is of amplitude $\le n$, where $n>0$, then $\pi_E$ is $(n,\Nis)$-Artin.
    \end{prop}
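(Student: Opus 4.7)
My plan is to induct on the amplitude upper bound, working Nisnevich-locally on $S$ since being $(n,\Nis)$-Artin is a Nisnevich-local property on the base. For amplitude $\le 0$, $\sE$ is a connective perfect complex, so the functor $T \mapsto \Maps_{\Perf(T)}(\sE|_T, \sO_T)$ defining $\V(\sE)$ is co-represented by the derived symmetric algebra $\Sym_{\sO_S}\sE$ via the standard adjunction between $\Sym$ and the forgetful functor on connective perfect complexes. Hence $E \simeq \Spec_S \Sym_{\sO_S}\sE$ and $\pi_E$ is affine.

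For the inductive step at amplitude $\le n$ with $n > 0$, I would choose Nisnevich-locally a strict representative of $\sE$ as a bounded complex of vector bundles $[V^a \to \cdots \to V^n]$ and apply stupid truncation in cohomological degree $n$ to obtain a fiber sequence $W[-n] \to \sE \to \sE'$ in $\Perf(S)$, where $W = V^n$ is a vector bundle and $\sE'$ has amplitude $\le n-1$. Applying the contravariant functor $\V$ produces a fiber sequence of stacks over $S$:
\[
  \V(\sE') \to \V(\sE) \to \V(W[-n]).
\]
I would identify $\V(W[-n]) \simeq B^n \V(W)$, the $n$-fold classifying stack of the classical vector bundle $\V(W)$, which is smooth $(n,\Nis)$-Artin over $S$ because $\V(W)$ is ``special'' for the Nisnevich topology (its torsors are Zariski-locally, hence Nisnevich-locally, trivial). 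The zero section $S \to B^n \V(W)$ is then smooth with Nisnevich-local sections, and pulling back $\V(\sE) \to \V(W[-n])$ along it recovers the smooth morphism $\V(\sE') \to \V(\sE)$ with Nisnevich-local sections.

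Combining with the inductive hypothesis (that $\V(\sE')$ is $(n-1,\Nis)$-Artin, or affine, over $S$), I would obtain a smooth atlas of $\V(\sE)$ by a scheme via composition. For the diagonal condition, I would use that $\V(\sE)$ is naturally an abelian group stack over $S$, with loop space $\Omega \V(\sE) \simeq \V(\sE[1])$ of amplitude $\le n-1$, so the diagonal of $\V(\sE)$ is $\Omega \V(\sE)$-representable after translation; by the inductive hypothesis applied to $\sE[1]$, it is therefore $(n-1,\Nis)$-representable. The main technical obstacle will be verifying that $B^n \V(W)$ is $(n,\Nis)$-Artin with the required smooth cover, which inductively reduces to the Nisnevich-local vanishing of higher cohomology for the quasi-coherent sheaf $\V(W)$ and to tracking representability through compositions.
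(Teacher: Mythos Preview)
Your proposal is correct and follows essentially the same inductive scheme as the paper: reduce to $S$ affine (you say ``Nisnevich-locally''), pick a presentation, peel off the degree-$n$ term, and invoke that $B^n$ of a vector bundle is $(\Nis,n)$-Artin because vector bundles are special.  The base case $n=0$ is identical.

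The organizational difference is in how the inductive step is packaged.  You use the fiber sequence $\V(\sE') \to \V(\sE) \to B^n\V(W)$, so that $\V(\sE')$ sits \emph{over} $\V(\sE)$ as a smooth atlas, and then you handle the diagonal separately.  The paper instead (after a preliminary reduction to amplitude $[0,n]$ via the closed immersion $E \hookrightarrow E_{\ge 0}$) uses the rotated triangle $E \to E_n[n] \to E'$ to write $E$ directly as the fiber product $E_n[n] \times_{E'} S$, and then simply quotes that fiber products of $(\Nis,n)$-Artin stacks are $(\Nis,n)$-Artin.  This buys the paper a one-line conclusion with no separate diagonal verification.

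Your diagonal argument is the one place that needs more than you wrote.  Knowing $\Omega\V(\sE) \simeq \V(\sE[1])$ is $(\Nis,n-1)$-Artin only computes the fiber of the zero section $0_E : S \to \V(\sE)$ over the basepoint; $(n-1)$-representability of $0_E$ (hence of the diagonal, via shearing) requires that $T \times_{\V(\sE)} S$ be $(\Nis,n-1)$-Artin for \emph{every} $T \to \V(\sE)$.  The fix is to observe that this fiber product is a pseudo-torsor under $\V(\sE[1])|_T$ and that such torsors are Nisnevich-locally trivial (again by specialness of derived vector bundles, which you already invoke for $B^n\V(W)$); alternatively, use your atlas to reduce to computing $\V(\sE') \times_{\V(\sE)} S \simeq B^{n-1}\V(W)$.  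Either way it works, but the paper's fiber-product formulation avoids this extra step entirely.
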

    \begin{proof}
      The proof is the same as that of $n$-Artinness which is well-known.
      We recall it anyway for the reader's convenience.
      Suppose $E$ is of amplitude $\le n$ where $n=0$ (resp. $n>0$).
      It is enough to show that if $S$ is affine, then $E$ is affine (resp. $(n,\Nis)$-Artin).
      In the $n=0$ case we have $E = \V(\sE) \simeq \Spec(\Sym(\sE))$ since $\sE$ is connective.
      Thus assume $n>0$.

      Choose a presentation of $E$ as a chain complex of vector bundles and let $E_{\ge 0}$ and $E_{<0}$ denote the stupid truncations so that we have an exact triangle $E_{<0} \to E \to E_{\ge 0}$, or in other words a homotopy cartesian square
      \begin{equation*}
        \begin{tikzcd}
          E \ar{r}{i}\ar{d}
          & E_{\ge 0} \ar{d}
          \\
          S \ar{r}{0}
          & E_{<0}[1].
        \end{tikzcd}
      \end{equation*}
      Since $E_{<0}[1]$ is of amplitude $\le 0$ and hence affine, its zero section is a closed immersion and thus so is $i$.
      Replacing $E$ by $E_{\ge 0}$, we may therefore assume that $E$ is of amplitude $[0, n]$.
      There is a canonical morphism $E \to E_{n}[n]$ whose cofibre $E'$ is of amplitude $[0, n-1]$.
      Thus we have a homotopy cartesian square
      \begin{equation*}
        \begin{tikzcd}
          E \ar{r}\ar{d}
          & E_{n}[n] \ar{d}
          \\
          S \ar{r}{0}
          & E'.
        \end{tikzcd}
      \end{equation*}
      By induction on $n$ we may assume $E'$ is $(\Nis,n-1)$-Artin.
      It will then suffice to show that $E_{n}[n]$ is $(\Nis,n)$-Artin for every $n>0$ (because then $E$ is a fibred product of $(\Nis,n)$-Artin stacks).
      In fact, the zero section $0 : S \twoheadrightarrow E_{n}[n]$ is smooth and has a global section because $S$ is affine and $E_{n}[n]$ is of amplitude $>0$.
      Thus it is a $(\Nis,n)$-Artin atlas for $E_{n}[n]$.
    \end{proof}

    \begin{rem}
      In fact one can easily show: $E$ is of amplitude $\le 0$ if and only if $\pi_E$ is affine, if and only if $\pi_E$ is representable, if and only if $0_E$ is a closed immersion.
      Similarly, $E$ is of amplitude $\ge 0$ if and only if $\pi_E$ is smooth, if and only if $\pi_{E^\vee}$ is affine.
    \end{rem}

\changelocaltocdepth{2}


\section{Definition and properties}
\label{sec:main}
  
\ssec{Definition}
\label{ssec:main/def}

  Let $S$ be a derived Artin stack and $E \in \DVect(S)$ a derived vector bundle over $S$.

  \sssec{Kernel}

    Consider the homogeneous evaluation morphism
    \begin{equation}
      \ev_E : \quo{E^\vee} \fibprod_S \quo{E} \to \quo{\A^1_S}
    \end{equation}
    sending a pair
    \begin{equation*}
      \big((L, \phi : L \to E^\vee), (L', \psi : L' \to E)\big)
    \end{equation*}
    to
    \begin{equation*}
      \big( L \otimes L', L \otimes L' \xrightarrow{\phi \otimes \psi} E^\vee \otimes_S E \xrightarrow{\ev} \A^1_S \big).
    \end{equation*}
    We set
    \begin{equation}
      \sP_E := \ev_E^* \quo{j_*}(\un) \in \D(\quo{E^\vee} \fibprod_S \quo{E})
    \end{equation}
    where $j : \bG_{m,S} \to \A^1_S$ is the inclusion and $\quo{j} : S = \quo{\bG_{m,S}} \hook \quo{\A^1_S}$ is scaling quotient.

  \sssec{Transform}
  
    Let $\pr_1$ and $\pr_2$ denote the respective projections
    \begin{equation*}
      \begin{tikzcd}
        & {\quo{E^\vee} \fibprod_S \quo{E}} \ar{rd}{\pr_2}\ar[swap]{ld}{\pr_1} &
        \\
        {\quo{E^\vee}} & & {\quo{E}}
      \end{tikzcd}
    \end{equation*}

    The \emph{homogeneous Fourier transform} on $E$ is the functor
    \begin{equation*}
      \FT_E : \D^{\Gm}(E) \to \D^{\Gm}(E^\vee)
    \end{equation*}
    defined by
    \begin{equation}
      \sF \mapsto \pr_{1,!}(\pr_2^*(\sF) \otimes \sP_E)[-1].
    \end{equation}

\ssec{Zero bundle}
\label{ssec:main/zero}

  By abuse of notation, we denote the zero bundle $\V_S(0) = S$ by $0_S \in \DVect(S)$.

  \begin{prop}\label{prop:zero}
    Let $o : B\bG_{m,S} \to B\bG_{m,S}$ be the involution $L \mapsto L^\vee$.
    There are canonical isomorphisms $\FT_{0_S} \simeq o^* \simeq o_* \simeq o_! \simeq o^!$.
  \end{prop}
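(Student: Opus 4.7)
The plan is to exhibit a canonical isomorphism $\FT_{0_S} \simeq o^*$; the remaining isomorphisms $o^* \simeq o_* \simeq o_! \simeq o^!$ are then formal consequences of $o$ being an involution, hence an isomorphism of stacks: for any iso we have $o_* \simeq o_!$ and $o^* \simeq o^!$, while $o^2 \simeq \id$ gives $(o^*)^{-1} \simeq o^*$, which combined with the adjunction identity $(o^*)^{-1} \simeq o_*$ yields $o^* \simeq o_*$. Note that both source and target of $\FT_{0_S}$ are $\D(B\Gm_{,S})$, since $(0_S)^\vee = 0_S$.

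The first substantive step is to compute the kernel $\sP_{0_S}$. For $E = E^\vee = 0$ the pairing $\ev_{0_S}$ is forced to be the zero morphism, so it factors through the closed stratum as $\ev_{0_S} = \quo{i_0} \circ m$, where $m : B\Gm_{,S} \fibprod_S B\Gm_{,S} \to B\Gm_{,S}$, $(L_1, L_2) \mapsto L_1 \otimes L_2$, is the multiplication on the Picard stack. Thus $\sP_{0_S} \simeq m^* K$ with $K := \quo{i_0}^* \quo{j}_* \un$.

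Next, identify $K$. The key observation is that $\quo{\A^1_S}$ is the total space $\pi_L : \V(L) \to B\Gm_{,S}$ of the tautological line bundle, with $\pi_L \circ \quo{j} = e : S \to B\Gm_{,S}$ the neutral element. Applying $\pi_{L,!}$ to the localization triangle $\quo{j}_! \un \to \un \to \quo{i_0}_* \un$ and invoking the Thom identification $\pi_{L,!} \un \simeq \un\vb{-L}$ produces a triangle $e_! \un \to \un\vb{-L} \to \un$ on $B\Gm_{,S}$. Rotating and matching this against the triangle $\un\vb{-L} \to \un \to K$ (obtained by applying $\quo{i_0}^*$ to the complementary localization triangle, using $\quo{i_0}^! \un \simeq \un\vb{-L}$) identifies $K \simeq e_! \un [1]$: in other words, $K$ is the \emph{delta function at the identity} of the Picard stack, up to shift. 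The step that will require the most care is verifying that the connecting maps in the two triangles match up to the required sign, since both arise from the same line bundle $L$ but via two different localization sequences.

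The final step rewrites $\FT_{0_S}$ as a convolution on the Picard stack $B\Gm_{,S}$. The change of variables $(L_1, L_2) \mapsto (L_1 \otimes L_2^\vee, L_2)$ on $B\Gm_{,S} \fibprod_S B\Gm_{,S}$ is an isomorphism exchanging $m$ with $\pr_1$ (introducing an $o$ on the factor carrying $\sF$), converting the Fourier integral into $\FT_{0_S}(\sF) \simeq K \star o^* \sF [-1]$, where $\sG_1 \star \sG_2 := m_!(\pr_1^* \sG_1 \otimes \pr_2^* \sG_2)$ denotes convolution. Substituting $K \simeq e_! \un [1]$ absorbs the shift, reducing the claim to the unit-of-convolution identity $(e_! \un) \star \sG \simeq \sG$ for every $\sG$. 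This follows by base change and the projection formula applied to the Cartesian square pairing $e : S \to B\Gm_{,S}$ with $\pr_1 : B\Gm_{,S} \fibprod_S B\Gm_{,S} \to B\Gm_{,S}$, together with the elementary identities $m \circ (e \times \id) = \pr_2 \circ (e \times \id) = \id$ on $B\Gm_{,S}$.
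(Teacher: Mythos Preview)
Your argument is correct and follows essentially the same route as the paper's: factor $\ev_{0_S} = \quo{i}\circ m$, identify $K := \quo{i}^*\quo{j}_*(\un)$ with $q_!(\un)[1]$ (your $e$ is the paper's $q$), and then reduce the integral transform to $o^*$ by base change and the projection formula---the paper does this last step via the cartesian square exhibiting $(\id,o):B\Gm\to B\Gm\times B\Gm$ as the fibre of $m$ over $q$, which is precisely your convolution-unit identity in disguise. The one place the paper is more streamlined is the computation of $K$: instead of producing two triangles $\un\vb{-L}\to\un\to(-)$ and matching their boundary maps, it applies $\quo{p}_!$ to the localization triangle $\quo{j}_!\to\quo{j}_*\to\quo{i}_*K$ and uses the contraction lemma $\quo{p}_!\quo{j}_*\simeq\quo{i}^!\quo{j}_*\simeq 0$ to read off $K\simeq q_!(\un)[1]$ in one step, sidestepping the sign-matching concern you flag.
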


  See \ssecref{ssec:easy/zero}.

\ssec{Involutivity}
\label{ssec:main/invol}

  In the most general case, our statement of involutivity is up to twisting with a canonical $\otimes$-invertible object.

  \begin{lem}\label{lem:L}
    Let $E \in \DVect(S)$.
    The object
    \begin{equation}\label{eq:L}
      \sL^{E} :=
      \pi_{E,!} \FT_{E^\vee}(\un_{E^\vee})
      \simeq 0_{E}^! \FT_{E^\vee}(\un_{E^\vee})
      \in \D^{\Gm}(S)
    \end{equation}
    is $\otimes$-invertible.
  \end{lem}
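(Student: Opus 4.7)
My plan proceeds in two parts.

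The identification $\pi_{E,!} \FT_{E^\vee}(\un_{E^\vee}) \simeq 0_E^! \FT_{E^\vee}(\un_{E^\vee})$ is immediate from the cosupport property, the key technical result of the paper to be established in Sections~\ref{sec:supp}--\ref{sec:cosupp}. That property asserts the counit $0_{E,!}\, 0_E^!\, \FT_{E^\vee}(\un_{E^\vee}) \to \FT_{E^\vee}(\un_{E^\vee})$ is an equivalence, so applying $\pi_{E,!}$ and using $\pi_E \circ 0_E = \id_S$ (whence $\pi_{E,!}\, 0_{E,!} \simeq (\pi_E \circ 0_E)_! = \id$) yields the claimed identification.

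For $\otimes$-invertibility, the plan is to compute $\sL^E = 0_E^!\, \FT_{E^\vee}(\un_{E^\vee})$ explicitly by base change. Unwinding the definition, $\FT_{E^\vee}(\un_{E^\vee}) = \rho_{1,!}\, \sP_{E^\vee}[-1]$ where $\rho_1 : \quo{E} \fibprod_S \quo{E^\vee} \to \quo{E}$ is the projection and $\sP_{E^\vee} = \ev_{E^\vee}^*\, \quo{j_*}(\un)$. Applying the $!$-base-change isomorphism along $\quo{0_E} : S \times B\Gm \to \quo{E}$ reduces the computation to a pushforward over $\quo{E^\vee}$ of the $!$-restriction of $\sP_{E^\vee}$ to $(S \times B\Gm) \fibprod_S \quo{E^\vee}$. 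On this locus the evaluation morphism $\ev_{E^\vee}$ factors through the zero section $\quo{0_S} : S \times B\Gm \to \quo{\A^1_S}$ (since pairing against the zero cosection is identically zero), so the restriction of $\sP_{E^\vee}$ is pulled back from $\quo{0_S}^*\, \quo{j_*}(\un) \in \D(S \times B\Gm)$, up to the Gysin twist measuring the discrepancy between $\tilde{0}_E^*$ and $\tilde{0}_E^!$ in the pulled-back square.

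Combining this with the projection formula, $\sL^E$ is exhibited as a tensor product of two invertible objects: (i) the Thom class $\pi_{E^\vee,!}(\un_{E^\vee})$ of the dual derived vector bundle $E^\vee$, which is $\otimes$-invertible by Poincaré duality applied to $\quo{E^\vee} \to S \times B\Gm$ --- reducible, via an iterated extension of $E^\vee$ by shifted trivial line bundles, to the homotopy-invariance axiom of the weave --- and (ii) an explicit equivariant twist arising from $\quo{0_S}^*\, \quo{j_*}(\un)$, computed directly in Appendix~\ref{sec:j} following Laumon's classical arguments and seen to be $\otimes$-invertible via the localization triangle $\quo{0_S}^!(\un) \to \un \to \quo{0_S}^*\, \quo{j_*}(\un)$ together with Poincaré duality for the smooth map $\quo{\A^1_S} \to S \times B\Gm$. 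The main obstacle will be organizing the base change carefully and correctly tracking the Gysin twist from $\tilde{0}_E^!$, which is delicate because $0_E$ is neither a closed immersion nor smooth in general (the two behaviours occur respectively for $E$ of nonpositive or nonnegative amplitude). Once this bookkeeping is in place, invertibility of $\sL^E$ follows immediately from the invertibility of each tensor factor.
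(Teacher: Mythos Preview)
Your first paragraph is correct, though the identification $\pi_{E,!} \simeq 0_E^!$ on $\Gm$-equivariant sheaves already follows directly from the contraction lemma (\corref{cor:contractder}) and does not require the cosupport property.

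The second part contains a genuine gap. The central claim---that $\pi_{E^\vee,!}(\un_{E^\vee})$ is $\otimes$-invertible---is false in general. Take $S = \A^1$ with coordinate $t$ and let $E^\vee = \Fib(t : \A^1_S \to \A^1_S) \in \DVect(S)$, so that $(E^\vee)_\cl = \{tx=0\} \subset \A^2$; by proper base change the $*$-stalk of $\pi_{E^\vee,!}(\un)$ at $t_0 \neq 0$ is $\un$ while at $t_0 = 0$ it is $\un\vb{-1}$, so this object is not invertible. Your proposed reduction ``via iterated extension by shifted trivial line bundles'' cannot repair this: a fiber sequence $F \to E^\vee \to E''$ in $\DVect(S)$ yields a smooth fibration $E^\vee \to E''$ of stacks only when $F$ is of amplitude $\ge 0$; when $F$ has negative-amplitude pieces the map is a closed immersion rather than a bundle, and the induction breaks. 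There are further problems upstream: the exchange $0_E^!\,\pr_{1,!} \simeq \pr'_{1,!}\,\tilde{0}_E^!$ you invoke is not a valid base-change isomorphism unless one of $0_E$ or $\pr_1$ is smooth, and there is no ``Gysin twist'' relating $\tilde{0}_E^!$ to $\tilde{0}_E^*$ for arbitrary $E$. The paper's argument proceeds differently: it first reduces smooth-locally on $S$ to the case where $E$ admits a presentation $E = \Fib(d : E_+ \to E_-)$ with $E_+$ of amplitude $\ge 0$ and $E_-$ of amplitude $\le 0$, then pulls $\sL^{E^\vee}$ back along the smooth surjection $\pi_{E_-^\vee}$ and, using the formula $\FT_{E_+}(i_!\un) \simeq d^\vee_!(\un)\vb{E_-^\vee}\vb{-E_+}$ established during the cosupport proof together with the contraction lemma, identifies the result with $\un\vb{E_-^\vee}\vb{-E_+}$, which is manifestly invertible.
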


  See \ssecref{ssec:cosupp/L}.

  \begin{thm}\label{thm:invol}
    For every $E \in \DVect(S)$, there is a canonical isomorphism
    \begin{equation}\label{eq:hygrothermal}
      (-) \otimes \pi_{E}^*(\sL^{E}) \to \FT_{E^\vee} \FT_{E}(-)
    \end{equation}
    of functors $\D^{\Gm}(E) \to \D^{\Gm}(E)$.
  \end{thm}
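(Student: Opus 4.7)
The plan is to reduce the theorem to an identity of convolution kernels on $W := \quo{E}\times_S\quo{E}$, and then to derive that identity from the cosupport property. First, by repeated base change and projection formula manipulations in the six-functor formalism of the weave, I would rewrite the composition as an integral transform
\[\FT_{E^\vee}\FT_E(\sF) \simeq r_{1,!}\bigl(r_3^*\sF \otimes \sK\bigr),\]
where $r_1, r_3 : W \to \quo{E}$ are the two projections, and
\[\sK := \pi_{13,!}(\pi_{12}^*\sP_{E^\vee} \otimes \pi_{23}^*\sP_E)[-2]\]
is the convolution kernel computed from the threefold fibre product $Z := \quo{E}\times_S\quo{E^\vee}\times_S\quo{E}$. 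Applying the projection formula along the diagonal $\Delta : \quo{E} \hookrightarrow W$ shows that the source $\sF\otimes\pi_E^*\sL^E$ is itself an integral transform with kernel $\Delta_!\pi_E^*\sL^E$. Hence the theorem reduces to constructing a canonical morphism of kernels $\Delta_!\pi_E^*\sL^E \to \sK$ on $W$ and proving that it is invertible.

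To establish this kernel identity I would show two things: (a) $\sK$ is $!$-supported on the diagonal, meaning the counit $\Delta_!\Delta^!\sK \to \sK$ is invertible; and (b) $\Delta^!\sK \simeq \pi_E^*\sL^E$. For (b), I would use base change along the Cartesian square
\[\begin{tikzcd}
\quo{E}\times_S\quo{E^\vee} \ar[r, "\delta"] \ar[d, "q_1"'] & Z \ar[d, "\pi_{13}"] \\
\quo{E} \ar[r, "\Delta"'] & W,
\end{tikzcd}\]
with $\delta(e,e^\vee) = (e,e^\vee,e)$. The symmetry of the evaluation pairing identifies both $\delta^*\pi_{23}^*\sP_E$ and $\delta^*\pi_{12}^*\sP_{E^\vee}$ with $\sP_{E^\vee}$ on $\quo{E}\times_S\quo{E^\vee}$, and the resulting computation reduces $\Delta^!\sK$ to an expression built from $\FT_{E^\vee}(\un_{E^\vee})$ pulled back along $\pi_E$. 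Combined with the cosupport identification $\FT_{E^\vee}(\un_{E^\vee}) \simeq 0_{E,!}\sL^E$ and the $\otimes$-invertibility of $\sL^E$ from \lemref{lem:L}, this yields $\Delta^!\sK \simeq \pi_E^*\sL^E$.

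The main step, and the main obstacle, is (a): showing that $\sK$ is $!$-supported on the diagonal. The plan is to reinterpret $\sK$ as a Fourier transform in a family. Viewing one factor of $W$ as a base, the pulled-back derived vector bundle $\cE := E^\vee \times_S \quo{E} \to \quo{E}$ has $\quo{\cE^\vee}\cong W$; after a shear isomorphism (using the abelian group structure on $E/S$) that exchanges the diagonal with the zero section of $\cE^\vee/\quo{E}$, $\sK$ is identified — up to shift and twist by a pulled-back $\otimes$-invertible object — with the family Fourier transform $\FT_{\cE/\quo{E}}(\un_\cE)$. Cosupport, applied in the family, then forces $!$-support on the zero section, which, undoing the shear, is the diagonal of $W$. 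The technical difficulty lies in carrying out this identification precisely: one must track the shift and the $\otimes$-invertible twist (the latter being closely related to the sign issue of \propref{prop:gluetwist}), and exploit the multiplicative structure of the homogeneous kernel $\quo{j_*}\un$ in place of the additive simplification $\exp(a+b)=\exp(a)\exp(b)$ available for the classical additive Fourier transform.
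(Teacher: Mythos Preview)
Your overall strategy---reducing involutivity to a kernel identity $\sK\simeq\Delta_!\pi_E^*\sL^E$ on $W=\quo{E}\times_S\quo{E}$ and deriving that identity from the cosupport property---matches the paper exactly; this is the content of \lemref{lem:cosuppinvol} and \secref{sec:cosuppinvol}. The divergence is in how you establish the kernel identity.

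Your split into (a) and (b) has concrete problems. For (b), proper base change along your square yields $\Delta^*\pi_{13,!}\simeq q_{1,!}\delta^*$, not $\Delta^!\pi_{13,!}$; there is no general exchange isomorphism for the latter. Moreover the resulting expression is $q_{1,!}(\sP_{E^\vee}^{\otimes 2})[-2]$, which involves the kernel \emph{quadratically} and is not $\pi_E^*\sL^E$. For (a), your shear $(x,y)\mapsto(x,x-y)$ is an automorphism of $E\times_S E$ that is equivariant only for the \emph{diagonal} $\Gm$-action; it does not descend to $W=\quo{E}\times_S\quo{E}$, so a family cosupport statement over $\quo{E}$ cannot be transported to $\sK$ on $W$ without further work.

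The paper handles both issues at once by introducing the intermediary $\quo{(E\times_S E)}\xrightarrow{c}W$ (the quotient by the diagonal scaling, on which the shear \emph{does} act). The key technical step is \lemref{lem:P''}: one proves directly that $\sP''=\sK[2]\simeq c_!\,\quo{e}^*\FT_{E^\vee}(\un)[2]$, where $e:E\times_S E\to E$ is the difference map. This reduces to a universal computation on $\quo{\A^1_S}\times_S\quo{\A^1_S}$ combining the K\"unneth formula $\quo{j}_*(\un)\boxtimes\quo{j}_*(\un)\simeq(\quo{j}\times\quo{j})_*(\un)$ (\lemref{lem:kunn}) with the identity $(\quo{j}\times\quo{j})_*(\un)\simeq c_!\,\quo{j}_{\Delta,*}(\un)[1]$ (\lemref{lem:pnpqnq}); this is the precise form of the ``multiplicative structure'' you allude to. Once \lemref{lem:P''} is in hand, cosupport $\FT_{E^\vee}(\un)\simeq 0_{E,!}\sL^E$ and the cartesian square relating $0_E$, $e$, $\quo{\Delta}$, $c$, and $\Delta$ (diagram~\eqref{eq:lanthana}) give $\sP''\simeq\Delta_!\pi_E^*\sL^E[2]$ immediately---the identification of the twist and the diagonal support come simultaneously, with no need to compute $\Delta^!\sK$ separately.
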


  See \secref{sec:cosupp}.

  \begin{notat}
    Let $E \in \DVect(S)$.
    Tensoring with $\sL^E$ defines an auto-equivalence
    \begin{equation*}
      (-)\tw{E} := (-) \otimes \sL^E
      \quad \text{of}~\D^{\Gm}(S).
    \end{equation*}
    We denote its inverse by $(-)\tw{-E} := (-) \otimes \sL^{E,\otimes -1}$.
    We also write $(-)\tw{E} := (-) \otimes f^*(\sL^E)$ of $\D^{\Gm}(X)$ where $f : X \to S$ is any derived Artin stack over $S$ with $\Gm$-action.
    In this notation \eqref{eq:hygrothermal} reads:
    \begin{equation}
      (-)\tw{E} \to \FT_{E^\vee} \FT_{E}(-).
    \end{equation}
  \end{notat}

  \begin{cor}\label{cor:inverse}
    For every $E \in \DVect(S)$, the functor $\FT_{E^\vee}(\bullet)\tw{-E}$ determines a canonical inverse to $\FT_E$.
  \end{cor}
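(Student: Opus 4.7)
The plan is to deduce the corollary formally from Theorem~\ref{thm:invol} applied both to $E$ and to its dual $E^\vee$; no further geometric input should be needed.

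First I would apply Theorem~\ref{thm:invol} directly to $E$: it supplies a canonical natural isomorphism $\FT_{E^\vee}\FT_E(-) \simeq (-)\tw{E}$ of endofunctors of $\D^{\Gm}(E)$. Since $\sL^E$ is $\otimes$-invertible by Lemma~\ref{lem:L}, the endofunctor $(-)\tw{E}$ is an auto-equivalence with inverse $(-)\tw{-E}$, and composing with the latter yields a canonical natural identification
\[
  G \circ \FT_E(\sF)
  \;=\; \FT_{E^\vee}(\FT_E(\sF))\tw{-E}
  \;\simeq\; \sF\tw{E}\tw{-E}
  \;\simeq\; \sF,
\]
where $G := \FT_{E^\vee}(\bullet)\tw{-E}$. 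Thus $G$ is a canonical left inverse to $\FT_E$.

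Next I would apply Theorem~\ref{thm:invol} to the dual bundle $E^\vee \in \DVect(S)$ (using $(E^\vee)^\vee \simeq E$): the resulting natural isomorphism $\FT_E \FT_{E^\vee}(-) \simeq (-)\tw{E^\vee}$ on $\D^{\Gm}(E^\vee)$ is also an auto-equivalence by Lemma~\ref{lem:L}, so a symmetric argument produces a right inverse to $\FT_E$ up to natural isomorphism. Any functor of \inftyCats with both a left and a right one-sided inverse is an equivalence, and any one-sided inverse to an equivalence is canonically identified with its two-sided inverse. Hence $G$ is promoted to a canonical two-sided inverse of $\FT_E$.

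The only substantive thing to check, and the main place where care is needed, is the bookkeeping of the twists: one must match the $\tw{E}$ produced by Theorem~\ref{thm:invol} on $\D^{\Gm}(E)$ with the convention $(-)\tw{E} = (-) \otimes \pi_E^*(\sL^E)$ fixed in the notation paragraph preceding the corollary, and likewise for $E^\vee$. Once these identifications are in place the argument is purely formal, so I do not anticipate any genuine obstacle beyond invoking involutivity in both directions.
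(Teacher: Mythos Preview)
Your argument is correct and is precisely the formal deduction the paper has in mind: the corollary is stated without proof immediately after \thmref{thm:invol}, and your derivation---applying involutivity once to $E$ to get a left inverse and once to $E^\vee$ to get a right inverse, then invoking the standard fact that a functor with one-sided inverses on both sides is an equivalence with all such inverses canonically identified---is exactly the intended one. The only additional remark worth making is that your ``symmetric'' right inverse is literally $\FT_{E^\vee}((-)\tw{-E^\vee})$ rather than $\FT_{E^\vee}(-)\tw{-E}$, but as you correctly observe this is immaterial once $\FT_E$ is known to be an equivalence.
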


\ssec{Base change}
\label{ssec:main/funbase}

  \begin{prop}\label{prop:funbase}
    For every morphism $f : S' \to S$, denote by $f_E : E' \to E$ and $f_{E^\vee} : E'^\vee \to E^\vee$ its base changes.
    Then there are canonical isomorphisms
    \begin{align}
      f_{E^\vee}^* \circ \FT_{E} &\simeq {\FT_{E'}} \circ f_E^*,\tag{BC$^*$}\label{eq:BC^*}\\
      f_{E^\vee,*} \circ {\FT_{E'}} &\simeq {\FT_{E}} \circ f_{E,*},\tag{BC$_*$}\label{eq:BC_*}\\
      f_{E^\vee,!} \circ \FT_{E'} &\simeq {\FT_{E}} \circ f_{E,!}\tag{BC$_!$}\label{eq:BC_!}\\
      f_{E^\vee}^! \circ \FT_{E} &\simeq {\FT_{E'}} \circ f_E^!\tag{BC$^!$}\label{eq:BC^!}.
    \end{align}
  \end{prop}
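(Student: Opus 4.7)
I would reduce all four statements to a single kernel-compatibility isomorphism, then apply the standard base-change and projection formulas of the weave. Write $g : \quo{E'^\vee}\fibprod_{S'}\quo{E'} \to \quo{E^\vee}\fibprod_S\quo{E}$ for the canonical map induced by $f$, and note that $g$ sits in cartesian squares both with the projections $\pr_1,\pr_2$ (over $f_{E^\vee}$ and $f_E$ respectively) and with the evaluation maps $\ev_E,\ev_{E'}$ (over the induced morphism $\bar f : \quo{\A^1_{S'}} \to \quo{\A^1_S}$).

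\textbf{Step 1: kernel compatibility.} The crux is $g^*\sP_E \simeq \sP_{E'}$. By the cartesian square of evaluations this reduces to $\bar f^*\quo{j_*}(\un) \simeq \quo{j'_*}(\un)$. Both sides fit in the localization triangle \eqref{eq:loc} for the open–closed decomposition $\quo{\Gm_S}\hook\quo{\A^1_S}\hookleftarrow B\Gm_S$; their restrictions to the open stratum agree tautologically, so the task is to identify their restrictions to $B\Gm_S$. This restriction of $\quo{j_*}(\un)$ is a universal object depending only on the pair $(\Gm,\A^1)$ over $\Spec\bZ$, and the comparison is thus tautological.

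\textbf{Step 2: BC$^*$ and BC$_!$.} These follow directly. For \eqref{eq:BC^*}, proper base change for $\pr_1$ along $f_{E^\vee}$ combined with Step 1 gives
\begin{equation*}
  f_{E^\vee}^*\FT_E(\sF)
  \simeq \pr_{1,!}'\,g^*\bigl(\pr_2^*\sF\otimes\sP_E\bigr)[-1]
  \simeq \pr_{1,!}'\bigl(\pr_2'^* f_E^*\sF\otimes\sP_{E'}\bigr)[-1]
  = \FT_{E'}(f_E^*\sF).
\end{equation*}
For \eqref{eq:BC_!}, the identity $f_{E^\vee}\pr_1' = \pr_1 g$ gives $f_{E^\vee,!}\pr_{1,!}' = \pr_{1,!} g_!$; then the projection formula for $g_!$ together with proper base change for $\pr_2$ along $f_E$ yields
\begin{equation*}
  f_{E^\vee,!}\FT_{E'}(\sF')
  \simeq \pr_{1,!}\bigl(g_!\pr_2'^*\sF' \otimes \sP_E\bigr)[-1]
  \simeq \pr_{1,!}\bigl(\pr_2^* f_{E,!}\sF' \otimes \sP_E\bigr)[-1]
  = \FT_E(f_{E,!}\sF').
\end{equation*}

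\textbf{Step 3: BC$_*$ and BC$^!$; main obstacle.} The hard part is that these two cannot be obtained from the kernel compatibility alone, since base change of $*$-pushforward along a $!$-pushforward is not a formal consequence of the six functor formalism. I would defer them to after involutivity: once \thmref{thm:invol} is proven, \corref{cor:inverse} identifies $\FT_E^{-1}$ with $\FT_{E^\vee}(-)\tw{-E}$, so $\FT_E$ has a right adjoint that is itself (up to twist) a Fourier transform. Taking right adjoints of \eqref{eq:BC^*} (applied to $E^\vee$) and verifying that $\sL^E$ pulls back to $\sL^{E'}$ along $f$ then yields \eqref{eq:BC_*}, and the analogous manipulation starting from \eqref{eq:BC_!} gives \eqref{eq:BC^!}. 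Consequently only the first two isomorphisms belong in \secref{sec:easy}; the latter two are naturally situated in \secref{sec:funct2}.
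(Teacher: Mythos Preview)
Your overall strategy matches the paper's exactly: \eqref{eq:BC^*} and \eqref{eq:BC_!} are proven in \secref{sec:easy} from kernel compatibility plus base change and projection formulas, while \eqref{eq:BC_*} and \eqref{eq:BC^!} are deferred to \secref{sec:funct2} and deduced from the first two by passing to right adjoints using involutivity and the base change $f^*\sL^E \simeq \sL^{E'}$.

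There is one gap in Step~1. The claim that $\bar f^*\quo{j}_*(\un) \simeq \quo{j'}_*(\un)$ is not ``tautological'' from a universal-object argument: asserting that $\quo{j}_*(\un)$ over $S$ is pulled back from the analogous object over $\Spec\bZ$ is precisely the base-change statement you are trying to prove. Matching restrictions to the closed stratum $B\Gm$ is not enough either; you need the natural comparison map $\Ex^*_* : \bar f^*\quo{j}_*(\un) \to \quo{j'}_*(\un)$ to be invertible, and $\quo{i}^*\quo{j}_*$ does not commute with arbitrary base change for formal reasons. The paper handles this in \propref{prop:j_*(1)} by using the contraction lemma (\propref{prop:contract}) to compute $\quo{i}^*\quo{j}_*(\un) \simeq \quo{q}_!(\un)[1]$ explicitly, so that the localization triangle for $\quo{j}_*(\un)$ has outer terms built from $!$-pushforwards, which do satisfy base change. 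You should invoke that computation (or redo it) rather than declare the comparison tautological.
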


  See \ssecref{ssec:basechange1} and \secref{sec:funct2}.

  \begin{lem}\label{lem:twistbase}
    Let $E \in \DVect(S)$.
    For every morphism $f : S' \to S$, we have a canonical isomorphism
    \begin{equation}
      f^* \sL^E \simeq \sL^{E'}.
    \end{equation}
    Moreover, if $f_E : E' \to E$ denotes the base change, there are canonical isomorphisms
    \begin{align}
      f^*_E((-)\tw{E}) &\simeq f_E^*(-)\tw{E'},\\
      f^!_E((-)\tw{E}) &\simeq f_E^!(-)\tw{E'},\\
      f_{E,*}((-)\tw{E'}) &\simeq f_{E,*}(-)\tw{E},\\
      f_{E,!}((-)\tw{E'}) &\simeq f_{E,!}(-)\tw{E},
    \end{align}
    and similarly for $f_{E^\vee} : E'^\vee \to E^\vee$.
  \end{lem}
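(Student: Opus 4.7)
The plan is to first establish part~(1), the base change identification $f^* \sL^E \simeq \sL^{E'}$, and then deduce the four twist-compatibility statements as formal consequences via standard six-functor projection formulas. By Lemma~\ref{lem:L} the object $\sL^E$ is $\otimes$-invertible, which will be crucial for the manipulations involving $f_E^!$ and $f_{E,*}$.

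For part~(1): the base change square
\[
  \begin{tikzcd}
    E' \ar{r}{f_E}\ar{d}{\pi_{E'}} & E \ar{d}{\pi_E} \\
    S' \ar{r}{f} & S
  \end{tikzcd}
\]
is Cartesian, and its dualization gives the analogous square with $E^\vee, E'^\vee$. Starting from the presentation $\sL^E = \pi_{E,!} \FT_{E^\vee}(\un_{E^\vee})$, I apply $f^*$ and commute it successively past $\pi_{E,!}$ using proper base change, and past $\FT_{E^\vee}$ using \eqref{eq:BC^*} from \propref{prop:funbase}. Since $\ast$-pullback is monoidal, $f_{E^\vee}^* \un_{E^\vee} \simeq \un_{E'^\vee}$, and the composition yields the desired isomorphism $f^* \sL^E \simeq \pi_{E',!} \FT_{E'^\vee}(\un_{E'^\vee}) = \sL^{E'}$. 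Equivalently, one could start from the $0_E^!$-presentation and use \eqref{eq:BC^!} instead; the two resulting isomorphisms agree by naturality.

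For the four twist identities: by definition the twist on $\D^{\Gm}(E)$ (resp. $\D^{\Gm}(E')$) is tensoring with $\pi_E^* \sL^E$ (resp. $\pi_{E'}^* \sL^{E'}$), and by part~(1) together with $\pi_{E'}^* f^* \simeq f_E^* \pi_E^*$ these two invertible objects are identified under $f_E^*$. Hence:
\begin{itemize}
\item[] the identity for $f_E^*$ is immediate from monoidality of $\ast$-pullback;
\item[] the identity for $f_{E,!}$ follows from the projection formula $f_{E,!}(A \otimes f_E^* B) \simeq f_{E,!}(A) \otimes B$;
\item[] the identity for $f_E^!$ follows from the fact that $f_E^!$ commutes with tensoring by an invertible object pulled back along $f_E$, applied to the invertible $\pi_E^* \sL^E$;
\item[] the identity for $f_{E,*}$ follows from the analogous projection formula $f_{E,*}(A \otimes f_E^* B) \simeq f_{E,*}(A) \otimes B$, which also holds for invertible $B$.
\end{itemize}
The assertion for $f_{E^\vee}$ is obtained by the same argument applied to the dual bundle.

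The argument contains no real obstacle: it is entirely formal given \propref{prop:funbase} and the invertibility of $\sL^E$. The only non-automatic point is the use of $\otimes$-invertibility of $\sL^E$ in the $f_E^!$ and $f_{E,*}$ cases, where the relevant projection formulas do not hold in full generality but do hold for invertible coefficients; this is precisely what \lemref{lem:L} provides.
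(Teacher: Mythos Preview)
Your proof is correct and follows essentially the same approach as the paper: part~(1) via proper base change and \eqref{eq:BC^*}, the $f_E^*$ and $f_{E,!}$ identities via monoidality and the projection formula, and the $f_{E,*}$ and $f_E^!$ identities via $\otimes$-invertibility of $\sL^E$ (the paper phrases the latter two as passage to adjoints from the former two, which is formally the same as your projection-formula argument). One caveat: your parenthetical remark that one could alternatively prove part~(1) using the $0_E^!$-presentation and \eqref{eq:BC^!} is circular in the paper's logical order, since \eqref{eq:BC^!} is only established after involutivity, which in turn relies on \lemref{lem:L}, whose proof already invokes $f^*\sL^E \simeq \sL^{E'}$; stick to the \eqref{eq:BC^*} route.
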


  See Subsects.~\ref{ssec:basechangetwist1} and \ref{ssec:basechangetwist2}.

\ssec{Functoriality}
\label{ssec:main/fun}

  \begin{prop}\label{prop:fun}
    For every morphism of derived vector bundles $\phi : E' \to E$ over $S$, there are canonical isomorphisms
    \begin{align}
      \Ex^{*,\FT} &: \phi^{\vee,*} \circ {\FT_{E'}} \to {\FT_{E}} \circ \phi_!\tag{Fun$^*$}\label{eq:funA}\\
      \Ex^{\FT,!} &: {\FT_{E'}} \circ \phi^! \to \phi^\vee_* \circ {\FT_{E}}\tag{Fun$_*$}\label{eq:funA'}\\
      \Ex^{!,\FT} &: \phi^{\vee,!} \circ {\FT_{E'}\tw{-E'}} \to \FT_{E}\tw{-E} \circ \phi_*\tag{Fun$^!$}\label{eq:funB}\\
      \Ex^{\FT,*} &: {\FT_{E'}\tw{-E'}} \circ \phi^* \to \phi^\vee_! \circ {\FT_{E}\tw{-E}} \tag{Fun$_!$}\label{eq:funB'}
    \end{align}
  \end{prop}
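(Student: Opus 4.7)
The plan is to prove \eqref{eq:funA} directly from a correspondence-diagram argument, deduce \eqref{eq:funA'} by adjunction, and then obtain the twisted identities \eqref{eq:funB} and \eqref{eq:funB'} by applying the first two to the dual morphism $\phi^\vee: E^\vee\to E'^\vee$ and converting via the involutivity of the Fourier transform.

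For \eqref{eq:funA}, the relevant geometry is the pair of cartesian squares
\begin{equation*}
\begin{tikzcd}[column sep=large]
M \ar{r}{\id\times\quo\phi}\ar{d}[swap]{\pr_2} & N\ar{d}{\pr_2}\\
\quo{E'}\ar{r}{\quo\phi} & \quo{E}
\end{tikzcd}
\qquad
\begin{tikzcd}[column sep=large]
M \ar{r}{\quo{\phi^\vee}\times\id}\ar{d}[swap]{\pr_1} & N'\ar{d}{\pr_1}\\
\quo{E^\vee}\ar{r}{\quo{\phi^\vee}} & \quo{E'^\vee}
\end{tikzcd}
\end{equation*}
with $M:=\quo{E^\vee}\times_S\quo{E'}$, $N:=\quo{E^\vee}\times_S\quo{E}$, and $N':=\quo{E'^\vee}\times_S\quo{E'}$. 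Beyond proper base change and the projection formula attached to each square, the key input is the identification $(\id\times\quo\phi)^*\sP_E \simeq (\quo{\phi^\vee}\times\id)^*\sP_{E'}$ of the two pullbacks of the Fourier kernels on $M$, which is forced by the coincidence $\ev_E\circ(\id\times\quo\phi) \simeq \ev_{E'}\circ(\quo{\phi^\vee}\times\id)$ of evaluation morphisms $M\to\quo{\A^1_S}$ (the defining property of $\phi^\vee$ as the transpose of $\phi$). Granted this, a mechanical diagram chase combining base change in both squares with the projection formula for $\id\times\quo\phi$ converts $\FT_E(\phi_!\sF)$ into $\phi^{\vee,*}\FT_{E'}(\sF)$, producing \eqref{eq:funA}. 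Taking right adjoints — valid since $\FT_E$ and $\FT_{E'}$ are equivalences by Corollary~\ref{cor:inverse} — then yields $\FT_{E'}^{-1}\phi^\vee_* \simeq \phi^!\FT_E^{-1}$, which rearranges to \eqref{eq:funA'}.

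For \eqref{eq:funB'}, I apply \eqref{eq:funA} to $\phi^\vee$ (using $\phi^{\vee\vee}\simeq\phi$) to obtain $\phi^*\FT_{E^\vee}\simeq\FT_{E'^\vee}\phi^\vee_!$. Substituting $\FT_{E^\vee}(-)\simeq\FT_E^{-1}(-)\tw{E}$ and $\FT_{E'^\vee}(-)\simeq\FT_{E'}^{-1}(-)\tw{E'}$ from Theorem~\ref{thm:invol}, applying $\FT_{E'}$ to both sides, and redistributing twists via the $S$-linearity $\FT_E(\sF\otimes\pi_E^*\sG)\simeq\FT_E(\sF)\otimes\pi_{E^\vee}^*\sG$ — an immediate consequence of the projection formula for $\pr_1:N\to\quo{E^\vee}$ — produces \eqref{eq:funB'}, after using the projection formula for $\phi^\vee_!$ together with $\pi_E\circ\phi=\pi_{E'}$ and $\pi_{E^\vee}=\pi_{E'^\vee}\circ\phi^\vee$ to reassemble the twists. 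The parallel derivation starting from \eqref{eq:funA'} for $\phi^\vee$ yields \eqref{eq:funB}. The twist bookkeeping requires the identification $\sL^{E^\vee}\simeq\sL^E$, which follows from combining the two involutivity equivalences $\FT_{E^\vee}\FT_E\simeq(-)\tw{E}$ and $\FT_E\FT_{E^\vee}\simeq(-)\tw{E^\vee}$ with $S$-linearity and pullback along the zero section $0_{E^\vee}:S\to E^\vee$.

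The substantive step is the kernel identification on $M$ and its coherence at the $\infty$-categorical level, which comes down to tracing the equivalence of the two evaluation morphisms through the functor $\quo{j_*}$. Everything else is mechanical once involutivity is available; in particular the twist-bookkeeping in the derivation of \eqref{eq:funB} and \eqref{eq:funB'} is routine.
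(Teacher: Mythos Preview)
Your proposal is correct and follows essentially the same approach as the paper: \eqref{eq:funA} is proven directly via the kernel identification $(\id\times\quo\phi)^*\sP_E \simeq (\quo{\phi^\vee}\times\id)^*\sP_{E'}$ together with base change and projection formulas, and the remaining three identities are then deduced by passing to adjoints and invoking involutivity (\corref{cor:inverse}). The only differences are cosmetic---the paper orders the derivations slightly differently and leaves the identification $\sL^E\simeq\sL^{E^\vee}$ implicit rather than spelling it out as you do.
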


  See \ssecref{ssec:easy/fun} and \secref{sec:funct2}.

  \begin{exam}
    Let $E \in \DVect(S)$.
    Since the projection $\pi_{E} : E \to S$ and zero section $0_{E} : S \to E$ are dual to $0_{E^\vee} : S \to E^\vee$ and $\pi_{E^\vee} : E^\vee \to S$, respectively, we get canonical isomorphisms
    \begin{align}
      \FT_{0_S} \circ ~\pi_{E,!} &\simeq 0_{E^\vee}^* \circ \FT_{E}\\
      \FT_{E} \circ ~0_{E,!} &\simeq \pi_{E^\vee}^* \circ \FT_{0_S}.
    \end{align}
  \end{exam}

\ssec{Outline of proof: support and cosupport properties}
\label{ssec:main/cosupp}

  We will see that the proof of involutivity (\thmref{thm:invol}) eventually boils down to what we call the ``cosupport property'' for the object $\FT_{E^\vee}(\un_{E^\vee}) \in \D^{\Gm}(E)$.

  When $E$ is of amplitude $\le 0$, the object $\FT_{E^\vee}(\un_{E^\vee})$ is supported on the zero section (which is a closed immersion):

  \begin{prop}[Support property]\label{prop:suppinvol}
    Let $E \in \DVect(S)$.
    If $E$ is of amplitude $\le 0$, then we have:
    \begin{thmlist}
      \item\label{item:Ghent}
      There is a canonical isomorphism
      \begin{equation}
        {0}_E^* \FT_{E^\vee}(\un)
        \simeq \un_S\vb{-E^\vee}.
      \end{equation}

      \item\label{item:journalize}
      The canonical morphism
      \begin{equation}\label{eq:doctrinally}
        \FT_{E^\vee}(\un)
        \xrightarrow{\mrm{unit}} {0_{E,*}} {0_E^*} \FT_{E^\vee}(\un)
        \simeq {0_{E,*}}(\un_S)\vb{-E^\vee}
      \end{equation}
      is invertible.
    \end{thmlist}
    In particular, there is a canonical isomorphism
    \begin{equation}\label{eq:conntwist}
      \un_S\tw{E}
      \simeq 0_E^* \FT_{E^\vee}(\un)
      \simeq \un_S \vb{-E^\vee}.
    \end{equation}
  \end{prop}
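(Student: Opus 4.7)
The plan is to establish part~(i) by a direct base-change computation, then to deduce part~(ii) from a vanishing statement on the open complement of the zero section, and finally to derive the \emph{in particular} statement formally.

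For part~(i), apply proper base change to $\FT_{E^\vee}(\un_{E^\vee}) = \pr_{1,!}(\sP_{E^\vee})[-1]$ along the closed immersion $\quo{0_E} : B\bG_{m,S} \to \quo{E}$ (which is indeed a closed immersion because $E$ has amplitude $\le 0$). The restriction of the evaluation pairing $\ev_{E^\vee}$ to $B\bG_{m,S} \fibprod_S \quo{E^\vee}$ is identically zero, since $\langle 0, \xi\rangle = 0$ for all $\xi$, and hence factors through the zero section $\quo{0} : B\bG_{m,S} \hook \quo{\A^1_S}$. The pulled-back Fourier kernel therefore identifies with the pullback of $\sM := \quo{0}^*\quo{j_*}(\un)$ from $B\bG_{m,S}$. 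Now $E^\vee$ has amplitude $\ge 0$, so the projection $\pi_{E^\vee}$ is smooth, and Poincaré duality yields $\pi_{E^\vee,!}(\un) \simeq \un_S\vb{-E^\vee}$. Combined with the stalk identification $\sM[-1] \simeq \un_{B\bG_{m,S}}$---a $\bG_m$-equivariant version of the standard computation of $j_*\un_{\bG_m}$ near the origin, treated in Appendix~\ref{sec:j}---this yields the claimed isomorphism $0_E^* \FT_{E^\vee}(\un) \simeq \un_S\vb{-E^\vee}$.

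For part~(ii), it suffices to show that $j_E^* \FT_{E^\vee}(\un_{E^\vee}) = 0$, where $j_E : U := E \setminus 0_E(S) \hook E$ is the open complement; this, together with (i) and the localization triangle $j_{E,!}j_E^* \to \id \to 0_{E,*}0_E^*$, forces the unit morphism to be invertible. Over $U$ the $\bG_m$-action is free and the restricted evaluation pairing is fiberwise a non-zero linear form on $E^\vee$; I would apply the contraction principle from Appendix~\ref{sec:equivariant} to reduce the vanishing to the elementary statement that the total push-forward of $\quo{j_*}(\un)$ over $\quo{\A^1_S}$ vanishes, which itself follows from the localization triangle for $\quo{j}$ and homotopy invariance applied to $\A^1_S \to S$. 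Once (i) and (ii) are established, the \emph{in particular} statement is formal: $\sL^E = 0_E^! \FT_{E^\vee}(\un_{E^\vee}) \simeq 0_E^! 0_{E,*}(\un_S \vb{-E^\vee}) \simeq \un_S \vb{-E^\vee}$, using that $0_E^! 0_{E,*} \simeq \id$ for the closed immersion $0_E$.

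The main obstacle is the vanishing in~(ii): in Laumon's classical setting it follows from a dimension count on a projective bundle, but in the derived/stacky context one must carefully deploy the contraction principle over the potentially-stacky quotient $\quo{U}$, and track the reduction through to the $\quo{\A^1_S}$-integral of $\quo{j_*}\un$. A secondary technicality is the identification of $\sM[-1]$ with $\un_{B\bG_{m,S}}$, which is a purely $\bG_m$-equivariant computation on $\quo{\A^1_S}$, independent of $E$, and which is the essential input making the $[-1]$ shift in the definition of $\FT$ absorb cleanly into the twist $\vb{-E^\vee}$.
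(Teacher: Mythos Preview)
Your overall strategy matches the paper's: compute $0_E^*\FT_{E^\vee}(\un)$ by base change and Poincar\'e duality for the smooth $\pi_{E^\vee}$, then show vanishing away from zero by reducing to $\quo{p}_!\,\quo{j}_*(\un)=0$. However, there is a genuine error in your execution of part~(i).

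The restricted evaluation $B\bG_{m,S} \fibprod_S \quo{E^\vee} \to \quo{\A^1_S}$ does factor through the zero section $\quo{i}:B\bG_{m,S}\hook\quo{\A^1_S}$, but the resulting map to $B\bG_{m,S}$ is \emph{not} the first projection: it is the composite
\[
  B\bG_{m,S}\fibprod_S\quo{E^\vee}\xrightarrow{\id\times a}B\bG_{m,S}\fibprod_S B\bG_{m,S}\xrightarrow{m}B\bG_{m,S}
\]
through the tensor-product map $m$. So the restricted kernel is not pulled back along $\pr_1$, and you cannot simply combine $\pi_{E^\vee,!}(\un)\simeq\un\vb{-E^\vee}$ with a putative identity $\sM[-1]\simeq\un_{B\bG_{m,S}}$. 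In fact that identity is false: by \propref{prop:j_*(1)}\itemref{item:retolerate} one has $\sM[-1]\simeq\quo{q}_!(\un_S)$ for $\quo{q}:S\to B\bG_{m,S}$, and pulling this back along $\quo{q}$ gives the compactly supported cohomology of $\bG_{m}$, not the unit. The paper sidesteps this by packaging the whole base-change step as the functoriality isomorphism $0_E^*\FT_{E^\vee}\simeq\FT_{0_S}\circ\pi_{E^\vee,!}$ (an instance of \eqref{eq:funA} for $\pi_{E^\vee}:E^\vee\to 0_S$, whose dual is $0_E$), and then applying $\FT_{0_S}\simeq o^*$ from \propref{prop:zero}. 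The involution $o$ happens to be invisible on the unit and on Thom twists, which is why the final answer looks as though $\sM[-1]$ were trivial --- but the intermediate identification you wrote does not hold.

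For part~(ii) the paper follows exactly your route, but it first reduces to residue fields so that $S$ becomes the spectrum of a field. This is what guarantees that the evaluation-at-$s$ map $\ev_s:E^\vee\to\A^1_S$ admits a section and can be identified with a vector-bundle projection $F\fibprod_S\A^1_S\to\A^1_S$, to which the homotopy-invariance axiom applies directly. Your global argument over $U=E\setminus 0$ would need either this pointwise reduction or an additional justification that homotopy invariance holds for torsors under derived vector bundles of amplitude $\ge 0$.
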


  See \ssecref{ssec:supp/proof}.
  In general the zero section $0_E$ is not a closed immersion, so that $0_{E,!}$ does not agree with $0_{E,*}$.
  Nevertheless, the following dual version of \propref{prop:suppinvol} holds for $E$ of arbitrary amplitude:

  \begin{prop}[Cosupport property]\label{prop:main/cosupp}
    For every $E \in \DVect(S)$, the object $\FT_{E^\vee}(\un_{E^\vee}) \in \D^{\Gm}(E)$ lies in the essential image of the fully faithful functor $0_{E,!}$.
    Equivalently, the canonical morphism
    \begin{equation}\label{eq:urodelan}
      0_{E,!} (\un_S\tw{E})
      \simeq 0_{E,!} 0_{E}^!(\FT_{E^\vee}(\un_{E^\vee}))
      \xrightarrow{\counit} \FT_{E^\vee}(\un_{E^\vee})
    \end{equation}
    is invertible.
  \end{prop}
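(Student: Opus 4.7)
The strategy I would take is induction on the amplitude of $E$, using the support property (\propref{prop:suppinvol}) as the base case. When $E$ has amplitude $\le 0$, the zero section $0_E : S \to E$ is a closed immersion, so $0_{E,!}$ coincides with $0_{E,*}$. The support property then directly yields $\FT_{E^\vee}(\un_{E^\vee}) \simeq 0_{E,*}(\un_S)\vb{-E^\vee} \simeq 0_{E,!}(\un_S\vb{-E^\vee})$, establishing the cosupport property and identifying $\sL^E = \un_S\vb{-E^\vee}$ in this case.

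For the inductive step, I would decompose $E$ via a fiber sequence $E' \to E \to V[b]$ of derived vector bundles over $S$, obtained by isolating the topmost degree of a presentation of $E$ by a complex of vector bundles: here $V$ is a vector bundle, $b \ge 1$ is the top of the amplitude of $E$, and $E'$ has strictly smaller amplitude. Dually this gives $V^\vee[-b] \to E^\vee \to E'^\vee$ with $V^\vee[-b]$ of amplitude $\le -1 \le 0$. The fiber sequence yields a Cartesian square
\begin{equation*}
\begin{tikzcd}
E \ar{r}\ar{d} & S \ar{d}{0_{V[b]}} \\
E' \ar{r} & V[b]
\end{tikzcd}
\end{equation*}
By the inductive hypothesis, the cosupport property holds for $E'$. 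I would then combine the base change isomorphisms of \propref{prop:funbase} with the untwisted functoriality isomorphisms $\Ex^{*,\FT}$ and $\Ex^{\FT,!}$ of \propref{prop:fun} (which are available at this stage, since they do not involve the twists $\sL^E$) to express $\FT_{E^\vee}(\un_{E^\vee})$ in terms of $\FT_{E'^\vee}(\un_{E'^\vee})$ and $\FT_{V^\vee[-b]}(\un)$, reducing the cosupport property for $E$ to the inductive hypothesis for $E'$ together with the base case applied to the amplitude $\le 0$ bundle $V^\vee[-b]$.

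The main obstacle is the purely stacky direction $V[b]$ for $b \ge 1$: since $0_{V[b]}$ is not a closed immersion, the naive localization argument over the complement of the zero section is unavailable. Here the $\Gm$-equivariance and the contraction lemma from Appendix~\ref{sec:equivariant} are essential: they provide canonical identifications between $0_{V[b]}^!$ and $\pi_{V[b],!}$ on $\Gm$-equivariant objects (up to a Tate-type twist), enabling the stacky contribution to be absorbed into the twist $\sL^E$. The main technical work is then to carefully track how the twists accumulate through the inductive step, ensuring consistency with the definition $\sL^E \simeq 0_E^! \FT_{E^\vee}(\un_{E^\vee})$ from \lemref{lem:L}.
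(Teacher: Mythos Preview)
Your proposal has a genuine gap: you claim that $\Ex^{\FT,!}$ (that is, \eqref{eq:funA'}) is ``available at this stage, since [it does] not involve the twists $\sL^E$''. This is false. Although the \emph{statement} of \eqref{eq:funA'} is twist-free, its proof (\secref{sec:funct2}) proceeds by passing to right adjoints from \eqref{eq:funA}, and the right adjoint of $\FT_E$ is $\FT_{E^\vee}(-)\tw{-E}$ by \corref{cor:inverse}. Thus $\Ex^{\FT,!}$ for a morphism $\phi : E' \to E$ requires involutivity for \emph{both} $E'$ and $E$. In your inductive setup, involutivity for $E'$ would follow from the inductive hypothesis via \lemref{lem:cosuppinvol}, but involutivity for $E$ is exactly what you are trying to establish. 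The only functoriality genuinely available beforehand is $\Ex^{*,\FT}$ \eqref{eq:funA}, which gives information about $\phi^*\FT_{E^\vee}(\un)$ or $\FT_{E^\vee}(\psi_!\un)$ but not about $\FT_{E^\vee}(\un)$ directly; so your proposed reduction does not go through as stated. (The same issue affects the base change isomorphisms: only \eqref{eq:BC^*} and \eqref{eq:BC_!} are available prior to involutivity.)

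The paper avoids this circularity by a different mechanism. Rather than induction on amplitude, it makes a single decomposition $E = \Fib(d : E_+ \to E_-)$ with $E_+$ of amplitude $\ge 0$ and $E_-$ of amplitude $\le 0$ (after localizing on $S$). In the dual square, $i^\vee : E_+^\vee \twoheadrightarrow E^\vee$ is a smooth surjection (an $E_-^\vee$-torsor), so invertibility of the counit can be checked after applying $i^{\vee,*}$; base change transforms it into the counit for $d^\vee_!$. Then, using only $\Ex^{*,\FT}$ and the support property for $E_-$, one computes $\FT_{E_+^\vee}(d^\vee_!\un) \simeq i_!(\un)\vb{-E_-^\vee}$, and applies involutivity \emph{for $E_+^\vee$} (\corref{cor:conninvol}, valid because $E_+^\vee$ has amplitude $\le 0$) to invert this and obtain $\FT_{E_+}(i_!\un) \simeq d^\vee_!(\un)$ up to twist. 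The key point is that the only Fourier transforms that need to be inverted are $\FT_{E_+}$ and $\FT_{E_+^\vee}$, which live entirely in the amplitude $\le 0$ regime where involutivity is already in hand. Your inductive scheme, by contrast, never lands in a situation where involutivity is known on both sides of the relevant morphism.
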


  See \ssecref{ssec:cosupp/proof}.
  Involutivity will then follow from:

  \begin{lem}\label{lem:cosuppinvol}
    Let $E \in \DVect(S)$.
    If $E$ satisfies the cosupport property, then there is a canonical isomorphism
    \begin{equation*}
      (-)\tw{E} \to \FT_{E^\vee} \FT_{E}(-).
    \end{equation*}
  \end{lem}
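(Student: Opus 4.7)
My approach is to write both $\FT_{E^\vee}\FT_E(-)$ and $(-)\tw{E}$ as kernel transforms on $\quo E \times_S \quo E$ and produce a canonical identification of the two kernels.

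Applying proper base change (specifically \eqref{eq:BC_!}, which is proved in \propref{prop:funbase} independently of involutivity) and the projection formula iteratively to the composition $\FT_{E^\vee} \circ \FT_E$, I obtain
\begin{equation*}
  \FT_{E^\vee}\FT_E(\sF) \simeq \tilde p_{1,!}\bigl(\tilde p_2^*\sF \otimes \sK\bigr),
\end{equation*}
where $\tilde p_1, \tilde p_2 : \quo E \times_S \quo E \to \quo E$ are the two projections and $\sK := q_{13,!}(q_{12}^*\sP_{E^\vee} \otimes q_{23}^*\sP_E)[-2]$ is a convolution kernel built from the triple product $\quo E \times_S \quo{E^\vee} \times_S \quo E$ with projections $q_{ij}$. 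Applied to the diagonal $\Delta : \quo E \to \quo E \times_S \quo E$ (using $\tilde p_i \circ \Delta = \id$), the projection formula analogously gives $\sF\tw{E} \simeq \tilde p_{1,!}(\tilde p_2^*\sF \otimes \Delta_!\pi_{\quo E}^*\sL^E)$. Thus the lemma reduces to producing a canonical isomorphism of kernels $\Delta_!\pi_{\quo E}^*\sL^E \simeq \sK$.

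To establish this identification, I will view $\sK$ as a family Fourier transform. Setting $T := \quo E_{(1)}$ and $E_T := E \times_S T$, base-change compatibility identifies $q_{23}^*\sP_E$ with the family kernel $\sP_{E_T^\vee}$ and recognizes $q_{12}^*\sP_{E^\vee}$ as a ``universal character'' sheaf on $\quo{E_T^\vee} \simeq T \times_S \quo{E^\vee}$, so that unwinding the definitions yields $\sK \simeq \FT_{E_T^\vee}(\sP_{E^\vee})[-1]$. The cosupport hypothesis for $E$ transfers to $E_T$ via \eqref{eq:BC^*}, the compatibility of the zero section with base change, and the identification $\sL^{E_T} \simeq \pi_T^*\sL^E$ of \lemref{lem:twistbase}, giving $\FT_{E_T^\vee}(\un_{\quo{E_T^\vee}}) \simeq 0_{E_T,!}(\pi_T^*\sL^E)$. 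Under the natural isomorphism $\quo{E_T} \simeq \quo E \times_S T$, which sends the zero section $0_{E_T}$ to the diagonal $\Delta$, this becomes exactly $\Delta_!\pi_{\quo E}^*\sL^E$.

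The remaining ingredient, and the main obstacle, is a ``character shift'' identity relating $\FT_{E_T^\vee}(\sP_{E^\vee})$ to $\FT_{E_T^\vee}(\un_{\quo{E_T^\vee}})$. Classically this is the statement that the Fourier transform of a nontrivial character is a delta function at the corresponding point; in the family picture over $T$, the ``corresponding point'' traces out the diagonal. Since the kernel $\quo{j_*}(\un)$ on $\quo{\A^1}$ does not satisfy a naive multiplicativity with respect to addition on $\A^1$, this step cannot be read off mechanically from the cosupport hypothesis: one must construct a natural $\Gm$-equivariant intertwiner -- a form of ``twisted translation'' on $E_T^\vee$ relative to $T$ -- that matches $\sP_{E^\vee}$ with $\un$ in a way compatible with the Fourier kernel, thereby transferring the base-changed cosupport property from the zero section of $E_T$ to the diagonal of $\quo E \times_S \quo E$. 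Once this compatibility is in place, the three steps combine to give the canonical isomorphism asserted in the lemma.
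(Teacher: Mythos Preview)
Your overall strategy---reducing to an identification of kernels $\sK \simeq \Delta_!\pi_{\quo E}^*\sL^E$ on $\quo E \times_S \quo E$---matches the paper's, and your reduction of the right-hand side via the projection formula is fine. The gap is precisely where you flag it: the ``character shift'' identity relating $\FT_{E_T^\vee}(\sP_{E^\vee})$ to $\FT_{E_T^\vee}(\un)$ is not supplied, and as you note there is no naive multiplicativity of $\quo{j}_*(\un)$ under addition on $\A^1$ to appeal to. Without this, the argument does not close.

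The paper circumvents this entirely by working with the \emph{difference map} rather than a translation. Concretely, it shows (\lemref{lem:P''}) that the convolution kernel satisfies
\[
  \sP'' \simeq c_!\,\quo{e}^*\,\FT_{E^\vee}(\un)[2],
\]
where $e : E \times_S E \to E$ is $(x,y)\mapsto x-y$ and $c : \quo{(E\times_S E)} \to \quo E \times_S \quo E$. The point is that the needed ``character identity'' is established not on $E$ but on $\A^2$: one first writes $\sP'' \simeq \pr_{13,!}\ev''^*(\quo{j}_*(\un)\boxtimes\quo{j}_*(\un))$, then uses the K\"unneth formula (\lemref{lem:kunn}) and the computation $(\quo{j}\times\quo{j})_*(\un) \simeq c_!\,\quo{e}^*\,\quo{j}_*(\un)[1]$ on $\quo{\A^1}\times\quo{\A^1}$ (\lemref{lem:pnpqnq}). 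These are concrete calculations on $\quo{\A^2}$ that replace your missing intertwiner. Once $\sP''$ is expressed through $\quo{e}^*\FT_{E^\vee}(\un)$, the cosupport hypothesis $\FT_{E^\vee}(\un)\simeq 0_{E,!}(\sL^E)$ plus base change along the cartesian square relating $\quo{\Delta}$, $\quo{0}$, and $\quo{e}$ immediately yields $\sP'' \simeq \Delta_!\pi_E^*(\sL^E)[2]$, and the lemma follows. Your family-Fourier-transform reformulation is not wrong, but it trades the difference map for a translation, and the latter is harder to make sense of $\Gm$-equivariantly for the kernel $\quo{j}_*(\un)$.
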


  See \secref{sec:cosuppinvol}.

\ssec{Identification of the twist}

  We do not know whether the twist $\sL^E \simeq \un_S\tw{E}$ can be identified with the Thom twist $\un_S\vb{-E^\vee}$ in general.
  If $E$ admits a global presentation as a chain complex of vector bundles, one can with some care build such an isomorphism from the vector bundle case.
  To glue together these local isomorphisms (choosing presentations locally on $S$) we would need to show they are compatible up to coherent homotopy.
  Assuming the existence of a suitable t-structure this question reduced to the heart, where we just need to check a cocycle condition.
  This line of argument leads to a proof of the following statement.
  
  \begin{prop}\label{prop:gluetwist}
    Suppose that the weave $\D$ admits an orientation and a t-structure in which the unit is discrete, i.e., $\un_S \in \D(S)^\heartsuit$ for all derived Artin stacks $S$.
    Then for every $E \in \DVect(S)$ there exists a canonical isomorphism
    \begin{equation}\label{eq:gluetwist}
      \sL^E \simeq \un_S\tw{E} \simeq \un_S\vb{-r}
    \end{equation}
    where $r=\rk(E)$.
  \end{prop}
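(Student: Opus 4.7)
The plan is to construct the isomorphism locally, where $E$ admits a global presentation as a bounded chain complex of vector bundles, and then glue using the t-structure hypothesis.

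I would proceed by induction on the length of a presentation $E = [E_n \to \cdots \to E_0]$. The base case of a classical vector bundle $V$ of rank $r$ is handled directly by \propref{prop:suppinvol}\ref{item:Ghent} combined with the orientation, yielding $\sL^V \simeq \un_S\vb{-V^\vee} \simeq \un_S\vb{-r}$. The induction step rests on a \emph{multiplicativity} property: for every exact triangle $E' \to E \to E''$ in $\DVect(S)$, a canonical isomorphism
\[
  \sL^E \simeq \sL^{E'} \otimes \sL^{E''}.
\]
This should follow from a Fubini-type factorization of $\FT_E$ along the triangle together with involutivity (\thmref{thm:invol}); the compatibilities needed to extract $\sL$ come from Propositions \ref{prop:fun} and \ref{prop:funbase}. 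Applied to the triangle $V \to 0 \to V[1]$ with $\sL^{0_S} \simeq \un$ (from \propref{prop:zero}), multiplicativity gives $\sL^{V[1]} \simeq \un_S\vb{-\rk(V[1])} = \un_S\vb{r}$, and iterating across the Postnikov filtration of the presentation yields $\sL^E \simeq \un_S\vb{-r}$ globally in the presented case.

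For a general $E \in \DVect(S)$, perfectness provides a Zariski cover $\{U_\alpha \to S\}$ over which $E$ admits a bounded presentation, so by \lemref{lem:twistbase} the local step produces trivializations $\sL^E|_{U_\alpha} \simeq \un_{U_\alpha}\vb{-r}$. Two presentations on an overlap $U_\alpha \cap U_\beta$ differ by an automorphism of $\un\vb{-r}$, and gluing requires these transition automorphisms to assemble into a coherent descent datum. The hypothesis that $\un_S$ is discrete collapses $\Aut(\un_S\vb{-r}) \simeq \Aut(\un_S)$ to a discrete sheaf of abelian groups, so $\infty$-categorical descent reduces to a 1-cocycle condition in the heart $\D(S)^{\heartsuit}$. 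This cocycle is then verified classically: the discrepancy between two local trivializations is a K-theoretic determinant invariant of the change-of-presentation, whose class is determined by $r = \rk(E)$ and is therefore well-defined independent of the chosen presentation.

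The main obstacle, alluded to by the author's phrase ``with some care'', is establishing the multiplicativity isomorphism canonically and with the correct sign and shift conventions, so that iteration through the Postnikov filtration produces exactly $\un_S\vb{-r}$ and not a different Thom twist. The necessity of tracking such signs coherently across an $\infty$-categorical Fubini-type decomposition is precisely what motivates the auxiliary t-structure hypothesis: computing in the discrete heart permits the sign-tracking to be carried out as a classical cocycle problem and avoids engaging the full $\infty$-categorical coherence that the general story would otherwise demand.
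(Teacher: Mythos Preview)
The paper does not actually supply a proof of \propref{prop:gluetwist}: immediately after the statement the author writes ``We will give the details in a future revision'' and points to \cite[Lem.~2.20]{dimredcoha}. What the paper does offer is a sketch that matches yours almost exactly: build the isomorphism locally where $E$ admits a global presentation, ``with some care'', from the vector bundle case; then glue by using the t-structure to reduce the $\infty$-categorical coherence problem to a cocycle condition in the heart.

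Your proposal adds one concrete mechanism the paper leaves implicit, namely the multiplicativity isomorphism $\sL^E \simeq \sL^{E'} \otimes \sL^{E''}$ for exact triangles. This is plausible---the computation in \ssecref{ssec:cosupp/L} is essentially this for the triangle coming from a two-term presentation---but you have not proved it, and the ``Fubini-type factorization of $\FT_E$'' you invoke is not developed anywhere in the paper. More importantly, the paper's warning after the proposition is sharper than your closing paragraph suggests. The author flags that for a vector bundle $E$ the isomorphisms \eqref{eq:conntwist} for $E$ and for $E^\vee$ differ by a sign, that this sign must be accounted for \emph{even in the presence of a global resolution}, and that consequently \eqref{eq:gluetwist} disagrees with \eqref{eq:conntwist} even when $E$ is of amplitude $\le 0$. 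Your base case invokes \eqref{eq:conntwist} directly, and your inductive passage through shifted bundles via the triangle $V \to 0 \to V[1]$ is exactly where such signs accumulate. So the assertion that the transition automorphisms form a cocycle ``determined by $r = \rk(E)$'' is precisely what the sign subtlety calls into question; your outline acknowledges the difficulty but, like the paper itself, does not actually resolve it.
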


  The gluing is more subtle than we have suggested.
  This is due to the fact that if $E$ is a vector bundle, the isomorphisms \eqref{eq:conntwist} for $E$ and $E^\vee$ only agree up to a sign.
  This sign needs to be taken into account into constructing the isomorphism \eqref{eq:gluetwist} even in the presence of a global resolution.
  In particular, when $E$ is of amplitude $\le 0$, \emph{the isomorphism of \eqref{eq:gluetwist} only agrees with that of \eqref{eq:conntwist} up to a sign} in general.
  We will give the details in a future revision (see also \cite[Lem.~2.20]{dimredcoha}).

\ssec{Fourier isomorphism in Borel--Moore homology}

  For simplicity we assume the weave $\D$ is oriented and admits a t-structure as in \propref{prop:gluetwist}.
  Given $E \in \DVect(S)$, which we regard as usual with the weight $1$ scaling $\Gm$-action, we write $E^\diamond$ to denote the same derived vector bundle but with $\Gm$-action by weight $-1$ scaling.

  \begin{thm}\label{thm:fouriso}
    Let $S$ be a derived Artin stack.
    For every derived vector bundle $E \in \DVect(S)$ of rank $r$, there is a canonical isomorphism
    \begin{equation*}
      \pi_{E,*} \pi_E^!(-)
      \simeq \pi_{E^\vee[-1]^\diamond,*} \pi_{E^\vee[-1]^\diamond}^!(-) \vb{r}
    \end{equation*}
    of functors $\D^{\Gm}(S) \to \D^{\Gm}(S)$.
  \end{thm}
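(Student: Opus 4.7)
The plan is to Fourier-transform the left-hand side and reduce to a base change identity along the derived self-intersection of the zero section $0_{E^\vee} : S \to E^\vee$. Fix $\cF \in \D^{\Gm}(S)$, and write $(-)^\diamond$ for the involution $o^*$ on $\D^{\Gm}(S)$.

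First I would apply the functoriality isomorphisms $\Ex^{\FT,!}$ and $\Ex^{!,\FT}$ of \propref{prop:fun} to the morphism $\pi_E : E \to 0_S$ in $\DVect(S)$, whose dual is $0_{E^\vee} : S \to E^\vee$. Combining these with \propref{prop:zero} (identifying $\FT_{0_S} \simeq (-)^\diamond$) and \propref{prop:gluetwist} (identifying $(-)\tw{-E} \simeq (-)\vb{r}$), one obtains
\begin{equation*}
  \FT_E(\pi_E^!(\cF)) \simeq 0_{E^\vee,*}(\cF^\diamond),
  \qquad
  0_{E^\vee}^!(\FT_E(\cG))\vb{r} \simeq \pi_{E,*}(\cG)^\diamond.
\end{equation*}
Setting $\cG := \pi_E^!(\cF)$ in the second and applying $(-)^\diamond$ to both sides yields
\begin{equation*}
  \pi_{E,*}\pi_E^!(\cF) \simeq \bigl[0_{E^\vee}^! \, 0_{E^\vee,*}(\cF^\diamond)\bigr]^\diamond \vb{r}.
\end{equation*}

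Next I would identify $0_{E^\vee}^! \circ 0_{E^\vee,*}$ with $\pi_{Y,*}\pi_Y^!$ for $Y = E^\vee[-1]$. A Koszul calculation (locally on $S$, from a presentation of $\sE^\vee$) identifies $\sO_S \otimes^L_{\Sym\sE^\vee} \sO_S \simeq \Sym(\sE^\vee[1])$, so that $S \times_{E^\vee} S \simeq \V(\sE^\vee[1]) = E^\vee[-1]$; tracking $\Gm$-weights shows this identification is equivariant for the natural action on $E^\vee[-1]$. The two projections $S \times_{E^\vee} S \to S$ are canonically identified with $\pi_{E^\vee[-1]}$ (using $\pi_{E^\vee} \circ 0_{E^\vee} \simeq \id_S$), so applying the base change $f^! g_* \simeq g'_* f'^!$ --- the right adjoint of the fundamental proper base change $g^* f_! \simeq f'_! g'^*$, valid for any Cartesian square in the six-functor formalism --- to the square with all four sides equal to $0_{E^\vee}$ yields $0_{E^\vee}^! \, 0_{E^\vee,*} \simeq \pi_{E^\vee[-1],*}\, \pi_{E^\vee[-1]}^!$.

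Finally, to convert the outer $(-)^\diamond$ into the $\diamond$ decorating the stack in the statement, I would use that $o^*$ intertwines $\pi_{Y,*}\pi_Y^!$ with $\pi_{Y^\diamond,*}\pi_{Y^\diamond}^!$: that is, $[\pi_{Y,*}\pi_Y^!(\cF^\diamond)]^\diamond \simeq \pi_{Y^\diamond,*}\pi_{Y^\diamond}^!(\cF)$ for any $\Gm$-equivariant $Y/S$. Applied with $Y = E^\vee[-1]$, and using that $\vb{r}$ is $\diamond$-invariant, this combines with the previous steps to give the claimed isomorphism. The main subtlety is the $\Gm$-weight bookkeeping: one must verify that the two independent $(-)^\diamond$'s produced by the $\Ex$-isomorphisms of step one combine, through the self-intersection identification, to yield the single $\diamond$ decorating $E^\vee[-1]$ in the statement --- the same sign phenomenon as that noted after \propref{prop:gluetwist}.
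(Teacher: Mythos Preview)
Your argument is correct and uses exactly the same ingredients as the paper's proof: the functoriality isomorphisms \eqref{eq:funA'} and \eqref{eq:funB}, \propref{prop:zero}, \propref{prop:gluetwist}, and the base-change identity $0^! 0_* \simeq \pi_*\pi^!$ for the self-intersection of a zero section. The only difference is the order of the two main steps. The paper first applies base change to the square exhibiting $E$ as $S \fibprod_{E[1]} S$, obtaining $\pi_{E,*}\pi_E^! \simeq 0_{E[1]}^! 0_{E[1],*}$, and then applies Fourier functoriality to the morphism $0_{E[1]} : 0_S \to E[1]$ (with dual $\pi_{E^\vee[-1]}$). You instead apply Fourier functoriality first, to the morphism $\pi_E : E \to 0_S$ (with dual $0_{E^\vee}$), and then base change to the square exhibiting $E^\vee[-1]$ as $S \fibprod_{E^\vee} S$; but this is the same cartesian square up to the relabeling $E \leftrightarrow E^\vee[-1]$, and the functoriality isomorphisms you invoke for $\pi_E$ are those the paper invokes for $0_{E[1]}$, read from the other side. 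One simplification: your local Koszul computation is unnecessary, since the identification $S \fibprod_{E^\vee} S \simeq E^\vee[-1]$ holds globally and $\Gm$-equivariantly just because the functor $\V : \Perf(S)^\op \to \DVect(S)$ preserves finite limits (equivalently, $\DVect(S)$ is stable) --- this is what the paper uses without comment.
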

  \begin{proof}
    By base change for the homotopy cartesian square
    \begin{equation*}
      \begin{tikzcd}
        E \ar{r}{\pi_E}\ar{d}{\pi_E}
        & S \ar{d}{0_{E[1]}}
        \\
        S \ar{r}{0_{E[1]}}
        & E[1],
      \end{tikzcd}
    \end{equation*}
    we have a canonical isomorphism
    \begin{equation*}
      \pi_{E,*} \pi_E^!(-)
      \simeq 0_{E[1]}^! 0_{E[1],*}(-).
    \end{equation*}
    By \eqref{eq:funA'}, \eqref{eq:funB}, and \propref{prop:gluetwist} we have
    \begin{align*}
      \FT_{0_S} 0_{E[1]}^! 0_{E[1],*}(-)
      &\simeq \pi_{E^\vee[-1],*} \FT_{E[1]} 0_{E[1],*}(-)\\
      &\simeq \pi_{E^\vee[-1],*} \pi_{E^\vee[-1]}^! \FT_{0_S}(-) \tw{E[1]}\\
      &\simeq \pi_{E^\vee[-1],*} \pi_{E^\vee[-1]}^! \FT_{0_S}(-) \vb{r}.
    \end{align*}
    Recall that $\FT_{0_S} \simeq o^! \simeq o_*$ where $o : B\bG_{m,S} \to B\bG_{m,S}$ is as in \propref{prop:zero}.
    We conclude that
    \begin{equation*}
      \pi_{E,*} \pi_E^!(-)
      \simeq o_* \pi_{E^\vee[-1],*} \pi_{E^\vee[-1]}^! o^!
      \simeq \pi_{E^\vee[-1]^\diamond,*} \pi_{E^\vee[-1]^\diamond}^!
    \end{equation*}
    as claimed.
  \end{proof}

  We can interpret this in Borel--Moore homology as follows.
  Fix a base commutative ring $k$.
  For a derived Artin stack $X$ with $\Gm$-action, locally of finite type over $k$, we define the complex of $\Gm$-equivariant Borel--Moore chains on $X$ by
  \begin{equation*}
    \Chom^{\BM,\Gm}(X) :=
    \Chom^{\BM}([X/\Gm]_{/B\Gm}) :=
    R\Gamma(B\Gm, \quo{a}_*\quo{a}^!(\un))
  \end{equation*}
  where $a : X \to \Spec(k)$ is the projection and $\quo{a} : [X/\Gm] \to B\Gm := [\Spec(k)/\Gm]$ is the induced morphism.

  \begin{cor}\label{cor:fouriso}
    In the situation of \thmref{thm:fouriso}, there is a canonical isomorphism
    \begin{equation*}
      \Chom^{\BM,\Gm}(E) \simeq \Chom^{\BM,\Gm}(E^\vee[-1])\vb{r}.
    \end{equation*}
  \end{cor}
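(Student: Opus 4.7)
The corollary is an essentially formal consequence of \thmref{thm:fouriso}, obtained by integrating the isomorphism of functors over the base $S$ and then over $B\Gm$. Let $a : S \to \Spec(k)$ denote the structure morphism.

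First I would rewrite the $\Gm$-equivariant Borel--Moore chains of $E$ using the factorization of its structure morphism through $\pi_E : E \to S$. Functoriality of $(-)_*$ and $(-)^!$, applied equivariantly (so that $\pi_E$ stands throughout for the induced morphism $[E/\Gm] \to [S/\Gm]$), gives
\[
  \Chom^{\BM,\Gm}(E)
  \simeq R\Gamma\bigl(B\Gm,\; \quo{a}_*\, \pi_{E,*}\, \pi_E^!\, \quo{a}^!(\un_k)\bigr),
\]
and analogously for $E^\vee[-1]^\diamond$.

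Next I would apply \thmref{thm:fouriso} to the object $\sF := \quo{a}^!(\un_k) \in \D^{\Gm}(S)$, obtaining a canonical isomorphism
\[
  \pi_{E,*}\,\pi_E^!(\sF)
  \simeq \pi_{E^\vee[-1]^\diamond,*}\, \pi_{E^\vee[-1]^\diamond}^!(\sF)\,\vb{r}.
\]
Applying $R\Gamma(B\Gm,\,\quo{a}_*(-))$ to both sides and noting that the Thom twist $\vb{r}$ commutes with these functors (being $\otimes$-invertible and pulled back from $\Spec(k)$) yields the provisional identification
\[
  \Chom^{\BM,\Gm}(E) \simeq \Chom^{\BM,\Gm}(E^\vee[-1]^\diamond)\,\vb{r}.
\]

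The final step, which is the only part not formally supplied by \thmref{thm:fouriso}, is to identify $\Chom^{\BM,\Gm}(X^\diamond) \simeq \Chom^{\BM,\Gm}(X)$ for any $\Gm$-stack $X$ locally of finite type over $k$. I would achieve this via the inversion automorphism $t \mapsto t^{-1}$ of $\Gm$, which induces an equivalence $[X^\diamond/\Gm] \simeq [X/\Gm]$ lying over the involution $o : B\Gm \to B\Gm$ of \propref{prop:zero}; since $o$ is an automorphism, there is a canonical isomorphism $R\Gamma(B\Gm,-) \simeq R\Gamma(B\Gm, o^*(-))$, delivering the desired isomorphism $\Chom^{\BM,\Gm}(E^\vee[-1]^\diamond) \simeq \Chom^{\BM,\Gm}(E^\vee[-1])$. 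I do not expect any serious obstacle; the mild subtlety is precisely this last step, which is essentially a naturality statement for the six functors along group automorphisms.
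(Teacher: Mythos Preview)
Your proposal is correct and follows essentially the same route as the paper: apply \thmref{thm:fouriso} to the $!$-pullback of the unit, push forward to $B\Gm$, and then remove the ${}^\diamond$ using the involution $o$. The paper's proof is a one-liner recording only this last step (``it only remains to observe that $o$ induces an isomorphism $\Chom^{\BM,\Gm}(E)\simeq\Chom^{\BM,\Gm}(E^\diamond)$''), whereas you have helpfully unwound the factorization through $S$ and the compatibility of $\vb{r}$ with $R\Gamma(B\Gm,\quo{a}_*(-))$; but the content is the same.
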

  \begin{proof}
    It only remains to observe that $o$ induces an isomorphism
    \begin{equation*}
      \Chom^{\BM,\Gm}(E)
      \simeq \Chom^{\BM,\Gm}(E^\diamond)
    \end{equation*}
    for every $E \in \DVect(S)$.
  \end{proof}

  \begin{exam}\label{exam:subtext}
    Let $F_1$ and $F_2$ be vector subbundles of a vector bundle $V$ over a scheme $S$.
    If $E := F_1 \fibprod^R_V F_2 \in \DVect(S)$ then $E^\vee[-1] \simeq F_1^\perp \fibprod^R_{V^\vee} F_2^\perp$.
    Indeed, we have the commutative diagram of homotopy cartesian squares
    \begin{equation*}
      \begin{tikzcd}
        F_1^\perp \fibprod^R_{V^\vee} F_2^\perp \ar{r}\ar{d}
        & F_2^\perp \ar{r}\ar{d}
        & S \ar{d}{0}
        \\
        F_1^\perp \ar{r}\ar{d}
        & V^\vee \ar{r}\ar{d}
        & F_2^\vee \ar{d}
        \\
        S \ar{r}{0}
        & F_1^\vee \ar{r}
        & E^\vee.
      \end{tikzcd}
    \end{equation*}
    In this case, \corref{cor:fouriso} recovers Kashiwara's Fourier isomorphism (see \cite{EvensMirkovic}, \cite[\S 1.4]{MirkovicRicheChern}):
    \begin{equation*}
      \Chom^{\BM,\Gm}(F_1 \fibprod^R_V F_2)
      \simeq \Chom^{\BM,\Gm}(F_1^\perp \fibprod^R_{V^\vee} F_2^\perp)\vb{r}
    \end{equation*}
    where $r = \rk(E) = \rk(F_1)+\rk(F_2)-\rk(V)$.
  \end{exam}

  \begin{rem}
    Let $X$ be a derived Artin stack, locally homotopically of finite presentation over $k$.
    The Fourier isomorphism for the $1$-shifted tangent bundle reads:
    \begin{equation}\label{eq:lock}
      \oH^{\BM,\Gm}_0(T_X[1])
      \simeq \oH^{\BM,\Gm}_{2\mrm{vd}}(T^*_X[-2])
    \end{equation}
    where $\mrm{vd}$ is the virtual dimension of $X$ and we omit the Tate twist for simplicity.
    Via deformation to the derived normal bundle $N_{X/\Spec(k)} \simeq T_X[1]$ (cf. \cite[\S 3]{KhanVirtual}, \cite[\S 2.1]{virloc}) one has a canonical $\Gm$-equivariant Borel--Moore class on $T_X[1]$ (which roughly speaking is the cycle class of the intrinsic normal cone).
    This corresponds under \eqref{eq:lock} to an element
    \begin{equation}\label{eq:vfchfp}
      [X]^\mrm{vir} \in \oH^{\BM,\Gm}_{2\mrm{vd}}(T^*_X[-2]),
    \end{equation}
    which we may regard as a ``generalized'' virtual fundamental class; when $X$ is quasi-smooth we have $\oH^{\BM,\Gm}_{2\mrm{vd}}(T^*_X[-2]) \simeq \oH^{\BM}_{2\mrm{vd}}(X)$ and \eqref{eq:vfchfp} recovers the usual virtual fundamental class \cite{KhanVirtual}.
    The existence of such a class, living on a natural bundle over $X$ and of the expected dimension, can be regarded as confirmation of an expectation stated by Ciocan-Fontantine and Kapranov in the introduction of \cite{CiocanFontanineKapranov}.
    If we work over $k=\bC$ and $X$ is $1$-Artin with cotangent complex of amplitude $[-1, 2]$ (homologically graded), then by Borisov--Joyce and Oh--Thomas \cite{BorisovJoyce,OhThomas} there is another canonical class in $\oH^{\BM,\Gm}_{2\mrm{vd}}(T^*_X[-2])$ constructed using the $(-2)$-shifted symplectic structure on $T^*_X[-2]$, and the two should coincide.\footnote{%
      Thanks to Tasuki Kinjo for helpful discussions about this.
    }
  \end{rem}


\section{Easy proofs}
\label{sec:easy}

\ssec{Zero bundle}
\label{ssec:easy/zero}

  We prove \propref{prop:zero}.

  The evaluation map $\ev_{0_S} : [0^\vee_S/\Gm] \times [0_S/\Gm] \to [\A^1_S/\Gm]$ factors as the composite
  \[ B\bG_{m,S} \times B\bG_{m,S} \xrightarrow{m} B\bG_{m,S} \xrightarrow{\quo{i}} [\A^1_S/\bG_{m,S}] \]
  where the first map $m$ is $(L, L') \mapsto L \otimes L'$ and $i$ is the zero section.
  By localization, one computes ${i^*} {j_{*}}(\un) \simeq {q_{!}}(\un)[1]$, where $q : S \to B\bG_{m,S}$.
  Note that we have a cartesian square
  \[ \begin{tikzcd}
    B\bG_{m,S} \ar{r}\ar{d}{(\id,o)}
    & S \ar{d}{q}
    \\
    B\bG_{m,S} \times B\bG_{m,S} \ar{r}{m}
    & B\bG_{m,S}
  \end{tikzcd} \]
  where the left-hand vertical map is $L \mapsto (L, L^\vee)$.
  Thus the kernel of $\FT_0$ is
  \[
    \sP_0 = \ev_{0_S}^* j_*(\un)
    \simeq m^* q_!(\un)[1]
    \simeq (\id,o)_!(\un)[1]
  \]
  and $\FT_0$ itself is given by
  \begin{align*}
    \FT_0(\sF)
    &\simeq \pr_{1,!}(\pr_2^*(\sF) \otimes (\id,o)_!(\un))[1][-1]\\
    &\simeq \pr_{1,!}(\id,o)_!(\id,o)^*\pr_2^*(\sF)\\
    &\simeq o^*(\sF)
  \end{align*}
  by the projection formula.

  For the remaining isomorphisms, note that $o$ is finite étale so that $o_* \simeq o_!$ and $o^* \simeq o^!$.
  Since $o$ is an involution, we also have $o^*o^* \simeq \id$, hence $o^* \simeq o_*$ and $o_! \simeq o^!$.

\ssec{Base change, part 1}
\label{ssec:basechange1}

  Let the notation be as in \propref{prop:funbase}.

  The base change property for $\quo{j}_*(\un)$ (\propref{prop:j_*(1)}) implies that there is a canonical isomorphism
  \begin{equation}
    f_{E^\vee \fibprod_S E}^* (\sP_E) \simeq \sP_{E'}
  \end{equation}
  where $f_{E^\vee \fibprod_S E} : E'^\vee \fibprod_{S'} E' \to E^\vee \fibprod_S E$ is the base change of $f$.
  The isomorphisms \eqref{eq:BC^*} and \eqref{eq:BC_!} follow immediately using the base change and projection formulas.

  The proofs of the remaining two isomorphisms will be done after proving involutivity.

\ssec{Functoriality, part 1}
\label{ssec:easy/fun}

  We prove the isomorphism \eqref{eq:funA} of \propref{prop:fun}.

  Let $\phi : E' \to E$ be a morphism of derived vector bundles.
  We have the commutative square
  \[ \begin{tikzcd}
    \quo{E^\vee} \times \quo{E'} \ar{r}{\id \times {\phi}}\ar{d}{{\phi^\vee} \times \id}
    & \quo{E^\vee} \times \quo{E} \ar{d}{\ev_E}
    \\
    \quo{E'^\vee} \times \quo{E'} \ar{r}{\ev_{E'}}
    & \quo{\A^1}
  \end{tikzcd} \]
  whence a canonical isomorphism
  \[ (\id \times {\phi})^*\sP_{E}^* \simeq ({\phi^\vee} \times \id)^*(\sP_{E'}). \]

  We use the following commutative diagram, where all squares are cartesian.
  \[ \begin{tikzcd}[matrix xscale=0.2]
    & & \quo{E'} \times \quo{E'} \ar[leftarrow,swap]{ld}{\phi^\vee\times\id}\ar{rd}{\id\times f} \ar[bend right=50,swap]{lldd}{\pr_2}\ar[bend left=50]{rrdd}{\pr_1} & &
    \\
    & \quo{E^\vee} \times \quo{E'} \ar[swap]{ld}{\pr_2}\ar{rd}{\id\times f} & & \quo{E'^\vee} \times \quo{E} \ar[leftarrow,swap]{ld}{\phi^\vee \times \id}\ar{rd}{\pr_1} &
    \\
    \quo{E'} \ar{rd}{\phi} & & \quo{E^\vee} \times \quo{E} \ar[swap]{ld}{\pr_2}\ar{rd}{\pr_1} & & \quo{E'^\vee} \ar[leftarrow,swap]{ld}{\phi^\vee}
    \\
    & \quo{E} & & \quo{E^\vee} & &
  \end{tikzcd} \]

  By the base change and projection formulas we have:
  \begin{align*}
    \FT_{E} {\phi_!}(\sF)[1]
    &\simeq \pr_{1,!} (\pr_2^*({\phi_!}\sF) \otimes \sP_{E})\\
    &\simeq \pr_{1,!} ((\id \times {\phi})_!(\pr_2^*\sF) \otimes \sP_{E})\\
    &\simeq \pr_{1,!}(\id \times {\phi})_! (\pr_2^*\sF \otimes (\id \times {\phi})^*(\sP_{E}))\\
    &\simeq \pr_{1,!} (\pr_2^*\sF \otimes ({\phi^\vee} \times \id)^*(\sP_{E'}))
  \end{align*}
  Similarly we have:
  \begin{align*}
    {\phi^{\vee,*}} \FT_{E'}(\sF)[1]
    &\simeq {\phi^{\vee,*}} \pr_{1,!} (\pr_2^*(\sF) \otimes \sP_{E'})\\
    &\simeq \pr_{1,!} (\id \times {\phi})_! ({\phi^\vee} \times \id)^* (\pr_2^*(\sF) \otimes \sP_{E'})\\
    &\simeq \pr_{1,!} ({\phi^\vee} \times \id)^* (\pr_2^*(\sF) \otimes \sP_{E'})\\
    &\simeq \pr_{1,!} (\pr_2^*(\sF) \otimes ({\phi^\vee} \times \id)^* \sP_{E'}).
  \end{align*}
  The claim follows.
  
\ssec{Base change for the twist, part 1}
\label{ssec:basechangetwist1}

  Given a morphism $f : S' \to S$ and $E' := E \fibprod_S S' \in \DVect(S')$, we show the isomorphisms
  \begin{equation*}
    f^* \sL^E \simeq \sL^{E'}
  \end{equation*}
  and
  \begin{equation*}
    f^*_E((-)\tw{E}) \simeq f_E^*(-)\tw{E'},
    \qquad
    f_{E,!}(-)\tw{E} \simeq f_{E,!}((-)\tw{E'}).
  \end{equation*}
  of \lemref{lem:twistbase}.
  The remaining parts of these statements will be proven in \ssecref{ssec:basechangetwist2}.

  By \eqref{eq:BC^*} we have:
  \begin{align*}
    f^*\sL^E
    &= f^* \pi_{E,!} \FT_{E^\vee}(\un)\\
    &\simeq \pi_{E',!} f_{E^\vee}^* \FT_{E^\vee}(\un)\\
    &\simeq \pi_{E',!} \FT_{E'^\vee}(\un)\\
    &= \sL^{E'}.
  \end{align*}

  From $f^* \sL^E \simeq \sL^{E'}$ we now deduce
  \begin{align*}
    f_E^*((-)\tw{E})
    &= f_E^*(-\otimes\pi_E^*(\sL^E))\\
    &\simeq f_E^*(-)\otimes f_E^*\pi_E^*(\sL^E)\\
    &\simeq f_E^*(-)\otimes \pi_{E'}^*f^*(\sL^E)\\
    &\simeq f_E^*(-)\otimes \pi_{E'}^*(\sL^{E'})\\
    &= f_E^*(-)\tw{E'}.
  \end{align*}

  Similarly, using the projection formula:
  \begin{align*}
    f_{E,!}(-)\tw{E}
    &= f_{E,!}(-) \otimes \pi_E^*(\sL^E)\\
    &\simeq f_{E,!}(- \otimes f_E^*\pi_E^*(\sL^E))\\
    &\simeq f_{E,!}(- \otimes \pi_{E'}^*f^*(\sL^E))\\
    &\simeq f_{E,!}(- \otimes \pi_{E'}^*(\sL^{E'}))\\
    &= f_{E,!}((-)\tw{E'}).
  \end{align*}


\section{Proof of involutivity assuming cosupport}
\label{sec:cosuppinvol}

In this section we prove \lemref{lem:cosuppinvol}.

\ssec{Kernel of the square}

  Consider the following commutative diagram:
  \[\begin{tikzcd}[matrix xscale=0.1, matrix yscale=1.3]
    && \quo{E} \fibprod_S \quo{E} \\
    && \quo{E} \fibprod_S \quo{E^\vee} \fibprod_S \quo{E}  \\
    & \quo{E} \fibprod_S \quo{E^\vee} && \quo{E^\vee} \fibprod_S \quo{E} \\
    \quo{E} && \quo{E^\vee} && \quo{E}
    \arrow["\pr_1", swap, from=3-2, to=4-1]
    \arrow["\pr_2", from=3-2, to=4-3]
    \arrow["\pr_1", swap, from=3-4, to=4-3]
    \arrow["\pr_2", from=3-4, to=4-5]
    \arrow["\pr_{12}", swap, from=2-3, to=3-2]
    \arrow["\pr_{23}", from=2-3, to=3-4]
    \arrow["\pr_{13}", from=2-3, to=1-3]
    \arrow["\pr_1", swap, bend right, from=1-3, to=4-1]
    \arrow["\pr_2", bend left, from=1-3, to=4-5]
  \end{tikzcd}\]
  A straightforward application of base change and projection formulas yields an identification
  \begin{equation}\label{eq:FT2}
    {\FT_{E^\vee}} \circ {\FT_{E}}(-)
    \simeq \pr_{1,!}(\pr_2^*(-) \otimes \sP'')[-2]
  \end{equation}
  where
  \[ \sP'' := \pr_{13,!}(\pr_{12}^*(\sP_{E^\vee}) \otimes \pr_{23}^*(\sP_E)) \in \D(\quo{E} \fibprod_S \quo{E}). \]

  Consider the $\Gm$-action scaling both coordinates of $E \fibprod_S E$.
  The two projections $\pr_1, \pr_2 : E \fibprod_S E \to E$ are $\Gm$-equivariant.
  We let $c : \quo{(E \fibprod_S E)} \to \quo{E} \fibprod_S \quo{E}$ denote the induced morphism $(\quo{\pr_1}, \quo{\pr_2})$.
  We denote by $e : E \fibprod_S E \to E$ the ``difference'' morphism, given informally by $(x,y) \mapsto x-y$; this is also $\Gm$-equivariant.

  \begin{lem}\label{lem:P''}
    For any $E \in \DVect(S)$, there is a canonical isomorphism
    \begin{equation*}
      \sP''
      \simeq c_! \quo{e}^* \FT_{E^\vee}(\un)[2].
    \end{equation*}
  \end{lem}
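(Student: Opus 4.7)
The plan is to compute $\sP''$ via a pair of base changes, after identifying the tensor product of the two pulled-back kernels with a pushforward of a single kernel from an intermediate space.

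Set $T := \quo{E} \fibprod_S \quo{E^\vee} \fibprod_S \quo{E}$ (where the convolution integrand lives) and $T' := \quo{(E \fibprod_S E)} \fibprod_S \quo{E^\vee}$, where $E \fibprod_S E$ carries the diagonal $\bG_m$-action. Let $\mu : T' \to T$ be the natural map identifying the two line bundles of the $\quo{E}$-factors, let $\tilde\pr : T' \to \quo{(E \fibprod_S E)}$ be the projection forgetting the $\quo{E^\vee}$-factor, and let $(\quo{e}, \id) : T' \to \quo{E} \fibprod_S \quo{E^\vee}$ denote the corresponding composite. Both squares
\begin{equation*}
\begin{tikzcd}
T' \ar{r}{\mu} \ar{d}{\tilde\pr} & T \ar{d}{\pr_{13}} \\
\quo{(E \fibprod_S E)} \ar{r}{c} & \quo{E} \fibprod_S \quo{E}
\end{tikzcd}
\qquad
\begin{tikzcd}
T' \ar{r}{(\quo{e}, \id)} \ar{d}{\tilde\pr} & \quo{E} \fibprod_S \quo{E^\vee} \ar{d}{\pr_1} \\
\quo{(E \fibprod_S E)} \ar{r}{\quo{e}} & \quo{E}
\end{tikzcd}
\end{equation*}
are Cartesian.

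The key step is to establish, on $T$, the identity
\begin{equation*}
\pr_{12}^* \sP_{E^\vee} \otimes \pr_{23}^* \sP_E \simeq \mu_!\bigl((\quo{e}, \id)^* \sP_{E^\vee}\bigr)[1].
\end{equation*}
Morally, the two independent evaluations $\xi(v_1)$ and $\xi(v_3)$ defining the left-hand side take values in distinct line-bundle-twisted copies of $\A^1_S$; only over the image of $\mu$, where the two line bundles agree, do these scalars combine into a single evaluation $\xi(\quo{e}(v_1, v_3)) = \xi(v_1) \pm \xi(v_3)$, producing $\sP_{E^\vee}$ pulled back via $(\quo{e}, \id)$. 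This is the geometric incarnation of the additive-character property of the Fourier kernel $\quo{j}_*(\un)$, and the shift reflects the non-immersiveness of $\mu$ (which fibers as a $\bG_m$-torsor).

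Granting this, successive base changes in the two Cartesian squares yield
\begin{equation*}
\sP''
\simeq \pr_{13,!} \mu_! (\quo{e}, \id)^* \sP_{E^\vee}[1]
\simeq c_! \tilde\pr_! (\quo{e}, \id)^* \sP_{E^\vee}[1]
\simeq c_! \quo{e}^* \pr_{1,!} \sP_{E^\vee}[1]
= c_! \quo{e}^* \FT_{E^\vee}(\un_{E^\vee})[2],
\end{equation*}
as claimed. The main obstacle is the central identity: it is the only step where anything beyond formal base change and projection formula manipulations enters, and its proof requires careful bookkeeping of how "addition" on $\A^1_S$ relates to the $\bG_m$-quotient $\quo{\A^1_S}$ (on which no honest addition map exists), comparing $\quo{j}_*(\un) \boxtimes \quo{j}_*(\un)$ on $\quo{\A^1_S} \fibprod_S \quo{\A^1_S}$ with $\quo{\sigma}^* \quo{j}_*(\un)$ on $\quo{(\A^1_S \fibprod_S \A^1_S)}$, where $\sigma$ denotes the sum map.
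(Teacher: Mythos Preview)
Your outline is correct and follows essentially the same route as the paper's proof. The paper organizes things by first factoring the integrand through the ``universal'' evaluation map $\ev'' : T \to \quo{\A^1_S}\fibprod_S\quo{\A^1_S}$, and then proving your ``key identity'' there in the form $\quo{j}_*(\un)\boxtimes_S\quo{j}_*(\un)\simeq c_!\,\quo{e}^*\quo{j}_*(\un)[1]$ (split into a K\"unneth step and a separate lemma on $\quo{\A^2_S}$), before pulling back and base-changing exactly as you do; your intermediate space $T'$ and map $\mu$ are the base change of $c:\quo{\A^2_S}\to\quo{\A^1_S}\fibprod_S\quo{\A^1_S}$ along $\ev''$, so the two arguments unwind to the same diagram chase.
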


\ssec{Proof of Lemma~\ref{lem:P''}}

  Consider the morphism
  \[ \ev'' : \quo{E} \fibprod_S \quo{E^\vee} \fibprod_S \quo{E} \to \quo{\A^1_S} \fibprod_S \quo{\A^1_S} \]
  given on points by
  \begin{multline*}
    \big((L, x : L \to E), (L', \phi : L' \to E^\vee), (L'', y : L'' \to E)\big)\\
    \,\mapsto\,
    \big(
    ( L' \otimes L, L' \otimes L \xrightarrow{\phi \otimes x} E^\vee \fibprod_S E \xrightarrow{\ev} \A^1_S ),
    ( L' \otimes L'', L' \otimes L'' \xrightarrow{\phi \otimes y} E^\vee \otimes E \xrightarrow{\ev} \A^1_S )
    \big).
  \end{multline*}

  We have commutative squares
  \[ \begin{tikzcd}
    \quo{E} \fibprod_S \quo{E^\vee} \fibprod_S \quo{E} \ar{r}{\pr_{23}}\ar{d}{\ev''}
    & \quo{E} \fibprod_S \quo{E^\vee} \ar{d}{\ev}
    \\
    \quo{\A^1_S} \fibprod_S \quo{\A^1_S} \ar{r}{\pr_1}
    & \quo{\A^1_S},
  \end{tikzcd}
  \quad
  \begin{tikzcd}
    \quo{E} \fibprod_S \quo{E^\vee} \fibprod_S \quo{E} \ar{r}{\pr_{12}}\ar{d}{\ev''}
    & \quo{E} \fibprod_S \quo{E^\vee} \ar{d}{\ev'}
    \\
    \quo{\A^1_S} \fibprod_S \quo{\A^1_S} \ar{r}{\pr_2}
    & \quo{\A^1_S}.
  \end{tikzcd} \]
  This yields
  \begin{align}
    \sP''
    &\simeq \pr_{13,!} \ev''^*(\quo{j}_*(\un) \boxtimes_S \quo{j}_*(\un))\\
    &\simeq \pr_{13,!} \ev''^* c_! \quo{j}_{\Delta,*}(\un)[1]
  \end{align}
  where the second isomorphism comes from Lemmas~\ref{lem:kunn} and \ref{lem:pnpqnq}.

  Next observe that we have a commutative diagram
  \begin{equation}\label{eq:lanthana}
    \begin{tikzcd}[matrix xscale=0.5]
      \quo{S} \ar{r}{\quo{0}}\ar[phantom]{rd}{\square}
      & \quo{E}\ar[phantom]{rd}{\square}
      & \quo{E^\vee} \fibprod_S \quo{E} \ar[swap]{l}{\pr_2}\ar{r}{\ev}
      & \quo{\A^1_S} \ar[leftarrow]{r}{\quo{j}}\ar[phantom]{rd}{\square}
      & \quo{\Gm}\ar[leftarrow]{d}{\quo{e}}
      \\
      \quo{E} \ar{r}{\quo{\Delta}}\ar[swap]{u}{\quo{\pi_E}}\ar[equals]{d}
      & \quo{(E\fibprod_S E)} \ar[swap]{u}{\quo{e}}\ar{d}{c}\ar[phantom]{rd}{\square}
      & \quo{E^\vee} \fibprod_S \quo{(E\fibprod_S E)} \ar[swap]{l}{\pr_2}\ar{r}\ar[swap]{u}{\id\times\quo{e}}\ar{d}\ar[phantom]{rd}{\square}
      & \quo{\A^2_S}\ar[swap]{u}{\quo{e}}\ar{d}{c}\ar[leftarrow]{r}{\quo{j_\Delta}}
      & \quo{(\A^2\setminus\Delta)}
      \\
      \quo{E} \ar{r}{\Delta}
      & \quo{E}\fibprod_S \quo{E}
      & \quo{E}\fibprod_S\quo{E^\vee}\fibprod_S\quo{E} \ar[swap]{l}{\pr_{13}}\ar{r}{\ev''}
      & \quo{\A^1_S}\fibprod_S\quo{\A^1_S}
      &
    \end{tikzcd}
  \end{equation}
  where the indicated squares are homotopy cartesian.
  The notation is as follows:
  \begin{itemize}
    \item
    The two projections $\pr_1, \pr_2 : E \fibprod_S E \to E$ are $\Gm$-equivariant.
    We let $c : \quo{(E \fibprod_S E)} \to \quo{E} \fibprod_S \quo{E}$ denote the induced morphism $(\quo{\pr_1}, \quo{\pr_2})$.
    Similarly for $c : \quo{\A^2_S} \to \quo{\A^1_S} \fibprod_S \quo{\A^1_S}$.

    \item 
    $\Delta$ is the diagonal of $\quo{E}$ and $\quo{\Delta}$ is the quotient of the diagonal of $E$.

    \item
    $e : E \fibprod_S E \to E$ is the ``difference'' morphism, given informally by $(x,y) \mapsto x-y$.
    Similarly for $e : \A^2 \to \A^1$.
  \end{itemize}

  We thus get
  \begin{align*}
    \sP''
    &\simeq \pr_{13,!} \ev''^* c_! \quo{j}_{\Delta,*} e^*(\un)[1]\\
    &\simeq c_! \quo{e}^* \pr_{2,!} \ev_E^* \quo{j}_*(\un)[1]
  \end{align*}
  by base change formulas.

  Under the automorphism of $\quo{E}^\vee \fibprod_S \quo{E}$ which swaps the factors, the morphism $\ev_E : \quo{E}^\vee \fibprod_S \quo{E} \to \quo{\A^1_S}$ is identified with $\ev_{E^\vee}$ and the projection $\pr_2 : \quo{E}^\vee \fibprod_S \quo{E} \to \quo{E}$ is identified with $\pr_1 : \quo{E} \fibprod_S \quo{E}^\vee \to \quo{E}$.
  Thus by definition
  \[ \FT_{E^\vee}(\un)
    \simeq \pr_{2,!} \ev_E^* \quo{j}_*(\un)[-1]. \]
  Hence we conclude
  \begin{equation*}
    \sP'' \simeq c_! \quo{e}^* \FT_{E^\vee}(\un)[2].
  \end{equation*}
  as claimed.

\ssec{Proof of Lemma~\ref{lem:cosuppinvol}}

  Since $E$ satisfies the cosupport property, we have the canonical isomorphism
  \begin{equation*}
    0_{E,!}(\cL^{E})
    := 0_{E,!}0_E^! \FT_{E^\vee}(\un)
    \to \FT_{E^\vee}(\un).
  \end{equation*}
  Applying $c_! \quo{e}^*$ yields a canonical isomorphism
  \begin{equation*}
    c_! \quo{e}^* 0_{E,!}(\cL^{E})[2]
    \to c_! \quo{e}^* \FT_{E^\vee}(\un)[2]
    \simeq \sP''.
  \end{equation*}
  By base change, using the diagram \eqref{eq:lanthana} again, we obtain a canonical isomorphism
  \begin{equation*}
    \Delta_! {\pi_E^*}(\cL^{E})[2]
    \to \sP''.
  \end{equation*}

  Finally plugging this into \eqref{eq:FT2} yields
  \begin{align*}
    \FT_{E^\vee}\FT_E(\sF)
    &\simeq \pr_{1,!}(\pr_2^*(\sF) \otimes \sP'')[-2]\\
    &\simeq \pr_{1,!}(\pr_2^*(\sF) \otimes \Delta_! {\pi_E^*}(\cL^{E}))\\
    &\simeq \pr_{1,!}(\Delta_!(\Delta^*\pr_2^*(\sF) \otimes {\pi_E^*}(\cL^{E})))\\
    &\simeq \sF \otimes {\pi_E^*}(\cL^{E})
  \end{align*}
  where the second-to-last step is the projection formula.


\section{Co/support and involutivity for \texorpdfstring{$E \le 0$}{E<=0}}
\label{sec:supp}

In this section we fix $E \in \DVect(S)$ of amplitude $\le 0$.
We prove the support property for $E$ (which implies the cosupport property too) and deduce the optimal form of involutivity in this case.

\ssec{Proof of \propref{prop:suppinvol}}
\label{ssec:supp/proof}

  \sssec{Restriction to zero}

    We first compute the inverse image along $\quo{0}_E : \quo{S} \to \quo{E}$:
    \[
      \quo{0}_E^* \FT_{E^\vee}(\un)
      \simeq \FT_{0_S} (\quo{\pi}_{E^\vee,!} (\un))
      \simeq \FT_{0_S} (\un\vb{-E^\vee})
      \simeq \un\vb{-E^\vee},
    \]
    using functoriality as well as the isomorphisms $\quo{\pi}_{E^\vee}^! \simeq \quo{\pi}_{E^\vee}^*\vb{E^\vee}$ (Poincaré duality) and $\quo{\pi}_{E^\vee,!} \quo{\pi}_{E^\vee}^! \simeq \id$ (homotopy invariance) for the morphism $\quo{\pi}_{E^\vee} : \quo{E^\vee} \to \quo{S}$ (smooth because $E^{\vee}$ is of amplitude $\ge 0$).

  \sssec{Support and cosupport properties}

    Since $E$ is of amplitude $\le 0$, $0_E$ is a closed immersion and $0_{E,*} \simeq 0_{E,!}$.
    By localization, invertibility of either \eqref{eq:doctrinally} or \eqref{eq:urodelan} is equivalent to the assertion that $\FT_{E^\vee}(\un)$ is supported on the image of $\quo{0}_E : \quo{S} \to \quo{E}$.

    It will suffice to show that for every residue field $v : \Spec(\kappa) \to \quo{E}$ that factors through the complement of $0_E$, $v^* \FT_{E^\vee}(\un)$ vanishes.
    Without loss of generality, we may replace $S$ by $\Spec(\kappa)$ and show that for every nowhere zero section $s$ of $E \to S$, the inverse image along $s' : S \xrightarrow{s} E \twoheadrightarrow \quo{E}$ vanishes.

    We have the following commutative diagram:
    \[ \begin{tikzcd}
      S\ar[leftarrow]{r}{a_{E^\vee}}\ar{d}{s'}
      & \quo{E^\vee} \ar{r}{\quo{\ev_s}}\ar{d}{\id \times s'}
      & \quo{\A^1_S} \ar[equals]{d}
      \\
      \quo{E}\ar[leftarrow]{r}{\pr_2}
      & \quo{E^\vee}\times \quo{E} \ar{r}{\ev}
      & \quo{\A^1_S}
    \end{tikzcd} \]
    where the left-hand square is cartesian.
    Here $\quo{\ev_s}$ is the $\Gm$-quotient of the ``evaluation at $s$'' morphism $\ev_s : E^\vee \to \A^1_S$, and $a_{E^\vee}$ is the projection.
    Hence we have
    \begin{equation}\label{eq:quadrans}
      s'^* \pr_{2,!} \ev^*
      \simeq a_{E^\vee,!} (\id\times s')^* \ev^*
      \simeq a_{E^\vee,!} \quo{\ev_s^*}.
    \end{equation}
    Since $s$ is nowhere zero, $\ev_s$ is surjective and its fibre $F = \Fib(\ev_s : E^\vee \to \A^1_S)$ is of amplitude $\ge 0$.
    Since $S$ is affine, it follows that $\ev_s$ admits a section, hence can be identified with the projection $F \fibprod_S \A^1_S \to \A^1_S$.
    Using Poincaré duality and homotopy invariance for the latter we have
    \begin{align*}
      a_{E^\vee,!} \,\quo{\ev_s^*}
      &\simeq a_{!} \,\quo{\ev_{s,!}} \quo{\ev_s^*}\\
      &\simeq a_{!} \,\quo{\ev_{s,!}} \quo{\ev_s^!} \vb{-F}\\
      &\simeq a_{!} \vb{-F}
    \end{align*}
    where $a : \quo{\A^1_S} \to S$ is the projection.
    Combining with \eqref{eq:quadrans} we deduce
    \begin{equation}
      s'^* \FT_{E^\vee}(\un)
      := s'^* \pr_{2,!} \ev^*(\quo{j}_*(\un))
      \simeq a_{!}\,\quo{j}_*(\un) \vb{-F}.
    \end{equation}
    Since $a$ factors through $\quo{p} : \quo{\A^1_S} \to \quo{S}$, and $\quo{p}_!\,\quo{j}_* \simeq 0$ (\propref{prop:j_*(1)}), this vanishes.

\ssec{Proof of involutivity}

  At this point, combining \propref{prop:suppinvol} with \lemref{lem:cosuppinvol} yields the following form of involutivity:

  \begin{cor}[Involutivity]\label{cor:conninvol}
    Let $E \in \DVect(S)$ be of amplitude $\le 0$.
    Then there is a canonical isomorphism
    \begin{equation*}
      (-)\vb{-E^\vee} \to \FT_{E^\vee} \FT_{E}(-).
    \end{equation*}
  \end{cor}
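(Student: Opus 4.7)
The plan is to assemble two facts already in hand: \lemref{lem:cosuppinvol}, which yields involutivity up to the canonical twist $\tw{E}$ as soon as the cosupport property holds, and \propref{prop:suppinvol}, which in the amplitude $\le 0$ case both establishes that property and identifies the twist explicitly.

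First I would observe that since $E$ has amplitude $\le 0$ the zero section $0_E$ is a closed immersion, so $0_{E,*} \simeq 0_{E,!}$, and consequently the support statement \propref{prop:suppinvol}\itemref{item:journalize} is literally the same as the cosupport property \propref{prop:main/cosupp}. Invoking \lemref{lem:cosuppinvol} then supplies a canonical isomorphism
\begin{equation*}
  (-)\tw{E} \xrightarrow{\sim} \FT_{E^\vee}\FT_{E}(-).
\end{equation*}

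It remains to trivialize the twist. This is taken care of by the ``in particular'' clause of \propref{prop:suppinvol}, namely \eqref{eq:conntwist}, which provides a canonical identification $\sL^E \simeq \un_S\tw{E} \simeq \un_S\vb{-E^\vee}$. Pulling back along $\pi_E : E \to S$ and tensoring with an arbitrary $\sF \in \D^{\Gm}(E)$ then converts $(-)\tw{E} = (-) \otimes \pi_E^*(\sL^E)$ into $(-)\vb{-E^\vee}$; composing with the preceding display yields the stated isomorphism.

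No serious obstacle arises once these two inputs are granted. The only bookkeeping point is to confirm that the identification of $\sL^E$ issuing from \propref{prop:suppinvol} agrees with the copy of $\sL^E$ that appears inside the proof of \lemref{lem:cosuppinvol}; but both are extracted from the same cosupport counit $0_{E,!} 0_E^! \FT_{E^\vee}(\un) \xrightarrow{\sim} \FT_{E^\vee}(\un)$, so this compatibility is automatic.
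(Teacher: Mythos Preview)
Your proof is correct and follows essentially the same route as the paper: use \propref{prop:suppinvol} to verify the cosupport property for $E$ (since $0_{E,*}\simeq 0_{E,!}$ when $E$ has amplitude $\le 0$), apply \lemref{lem:cosuppinvol}, and then identify $\sL^E\simeq\un_S\vb{-E^\vee}$. The only cosmetic difference is that the paper recomputes $\sL^E$ directly from \propref{prop:suppinvol}\itemref{item:journalize} via $\pi_{E,!}0_{E,!}(\un)\simeq\un_S$, whereas you quote the packaged identification \eqref{eq:conntwist}; these are the same computation.
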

  \begin{proof}
    By \lemref{lem:cosuppinvol}, the cosupport property for $E^\vee$ yields the canonical isomorphism
    \begin{equation*}
      (-) \otimes \pi_{E}^*(\sL^{E}) \to \FT_{E^\vee} \FT_{E}(-).
    \end{equation*}
    By \propref{prop:suppinvol}\itemref{item:journalize}, we have the canonical isomorphism
    \begin{equation*}
      \sL^{E}
      \simeq \pi_{E,!} 0_{E,*}(\un)\vb{-E^\vee}
      \simeq \pi_{E,!} 0_{E,!}(\un)\vb{-E^\vee}
      \simeq \un_S\vb{-E^\vee}.
    \end{equation*}
  \end{proof}


\section{Cosupport and involutivity for general \texorpdfstring{$E$}{E}}
\label{sec:cosupp}

Let $E \in \DVect(S)$ be an arbitrary derived vector bundle.
We will now prove \propref{prop:main/cosupp}, which implies involutivity (\thmref{thm:invol}) by \lemref{lem:cosuppinvol}.
We will also show that the twist $\sL^E = \un\tw{E}$ is $\otimes$-invertible in general (\lemref{lem:L}).

\ssec{Proof of Proposition~\ref{prop:main/cosupp}}
\label{ssec:cosupp/proof}

  Let $E \in \DVect(S)$ and let us show that the canonical morphism
  \begin{equation}\label{eq:ideoplastia}
    \counit : 0_{E^\vee,!} 0_{E^\vee}^! \FT_E(\un_E)
    \to \FT_E(\un_E)
  \end{equation}
  is invertible.
  Equivalently, $\FT_E(\un_E)$ lies in the essential image of the fully faithful functor $0_{E^\vee,!}$ (\corref{cor:chlorine}).

  Let $f : S' \to S$ be a smooth surjection and adopt the notation of \propref{prop:funbase}.
  Applying $f_{E^\vee}^*$ on the left to the morphism \eqref{eq:ideoplastia} yields, by the base change formula $\Ex^*_! : f_{E^\vee}^* 0_{E^\vee,!} \simeq 0_{E'^\vee,!} f^*$, the exchange isomorphism $\Ex^{*!} : f^* 0_{E^\vee}^! \simeq 0_{E'^\vee}^! f_{E^\vee}^*$, and \eqref{eq:BC^*}, a canonical morphism
  \begin{equation}
    0_{E'^\vee,!} 0_{E'^\vee}^! \FT_{E'/S'}(\un_{E'})
    \to \FT_{E'/S'}(\un_{E'})
  \end{equation}
  which one easily checks is identified with the counit.
  Thus the claim is local on $S$ and we may assume that $S$ is affine.
  In particular we may, by choosing a presentation of $E$, write $E$ as the fibre of a morphism $d : E_+ \to E_-$ where $E_+$ is of amplitude $\ge 0$ and $E_-$ is of amplitude $\le 0$ (e.g. $d$ is the morphism
  \begin{equation}
    \begin{tikzcd}
      \cdots \ar{r} & E_{1} \ar{r}\ar{d} & E_0 \ar{r}\ar{d} & 0 \ar{d} & \\
      & 0 \ar{r} & E_{-1} \ar{r} & E_{-2} \ar{r} & \cdots &
    \end{tikzcd}
  \end{equation}
  where $\cdots \to E_1 \to E_0 \to E_{-1} \to \cdots$ is some presentation of $E$).
  We have the cartesian squares
  \begin{equation}\label{eq:nieceship}
    \begin{tikzcd}
      E \ar{r}{i}\ar{d}{\pi_E}
      & E_+ \ar{d}{d}
      \\
      S \ar{r}{0_{E_-}}
      & E_-,
    \end{tikzcd}
    \qquad
    \begin{tikzcd}
      E_-^{\vee} \ar{r}{d^\vee}\ar{d}{\pi_{E_-^{\vee}}}
      & E_+^{\vee} \ar{d}{i^\vee}
      \\
      S \ar{r}{0_{E^\vee}}
      & E^\vee
    \end{tikzcd}
  \end{equation}
  where $i$ is a closed immersion (since $E_-$ is of amplitude $\le 0$) and its dual $i^\vee$ is a smooth surjection (it is a torsor under $E_-^{\vee}$).
  We may thus check \eqref{eq:ideoplastia} is invertible after applying $i^{\vee,*}$ on the left; by base change and invertibility of $\Ex^{*!}$ this reduces to show that the morphism
  \begin{equation}\label{eq:Pellaea}
    \counit : d^{\vee}_! d^{\vee,!} \FT_{E_+} (i_! \un_E)
    \to \FT_{E_+} (i_! \un_E)
  \end{equation}
  is invertible.
  We claim that $\FT_{E_+} (i_! \un_E)$ is in the essential image of $d^\vee_!$.
  Since $\unit : \un \to d^{\vee,!} d^\vee_!(\un)$ is invertible (\corref{cor:chlorine}), it will follow from adjunction identities that \eqref{eq:Pellaea} is invertible.

  Using $\Ex^{*,\FT}$, $\Ex^*_!$, and the computation of $\FT_{E_-}(\un)$ (\propref{prop:suppinvol}, recall $E_-$ is of amplitude $\le 0$), we first compute:
  \begin{align}
    \FT_{E_+^{\vee}} (d^\vee_! \un_{E_-^{\vee}})
    &\simeq d^* \FT_{E_-} (\un_{E_-})\\
    &\simeq d^* 0_{E_-^{\vee},!}(\un_{E_-^{\vee}}) \vb{-E_-^\vee}\\
    &\simeq i_! (\un_E)\vb{-E_-^\vee}.
  \end{align}
  Using involutivity for $E_+^{\vee}$ (\corref{cor:conninvol}), we deduce the (canonical) isomorphism
  \begin{equation}\label{eq:hackneyer}
    \FT_{E_+}(i_! \un_E)
    \simeq \FT_{E_+}\FT_{E_+^{\vee}} (d^\vee_! \un)\vb{E_-^\vee}\\
    \simeq d^\vee_!(\un)\vb{E_-^\vee}\vb{-E_+}.
  \end{equation}
  The claim follows.

\ssec{Proof of Lemma~\ref{lem:L}}
\label{ssec:cosupp/L}

  We prove that $\sL^E := \pi_{E,!} \FT_{E^\vee}(\un)$ is $\otimes$-invertible.
  This is equivalent to the assertion that the evaluation morphism
  \begin{equation}\label{eq:shiny}
    \ev : \sL^E \otimes \uHom(\sL^E, \un_S) \to \un_S
  \end{equation}
  is invertible.
  If $f : S' \to S$ is a smooth morphism, then we have the canonical isomorphism
  \begin{equation*}
    f^*\uHom(\sL^E, \un_S)
    \simeq \uHom(f^*\sL^E, f^*\un_{S'})
  \end{equation*}
  under which the $*$-inverse image of \eqref{eq:shiny} is identified with
  \begin{equation*}
    f^*\sL^E \otimes \uHom(f^*\sL^E, f^*\un_{S'}) \to \un_{S'}.
  \end{equation*}
  Let $E' := E \fibprod_S S'$ and let $f_{E^\vee} : E'^\vee \to E^\vee$ be the base change of $f$.
  By \ssecref{ssec:basechangetwist1} we have $f^*\sL^E \simeq \sL^{E'}$.
  This shows that the question of invertibility of \eqref{eq:shiny} is local on $S$.
  In particular, we may assume that $E$ admits a presentation so that there are cartesian squares
  \begin{equation}
    \begin{tikzcd}
      E \ar{r}{i}\ar{d}{\pi_E}
      & E_+ \ar{d}{d}
      \\
      S \ar{r}{0_{E_-}}
      & E_-,
    \end{tikzcd}
    \qquad
    \begin{tikzcd}
      E_-^{\vee} \ar{r}{d^\vee}\ar{d}{\pi_{E_-^{\vee}}}
      & E_+^{\vee} \ar{d}{i^\vee}
      \\
      S \ar{r}{0_{E^\vee}}
      & E^\vee
    \end{tikzcd}
  \end{equation}
  where $E_+$ is of amplitude $\ge 0$ and $E_-$ is of amplitude $\le 0$, as above in \eqref{eq:nieceship}.
  For simplicity we may as well show the claim for $E^\vee$ instead, i.e., that $\sL^{E^\vee} = \pi_{E^\vee,!} \FT_{E}(\un)$ is $\otimes$-invertible.
  After $*$-inverse image along the smooth surjection $\pi_{E_-^{\vee}} : E_-^{\vee} \to S$, we have:
  \begin{align*}
    \pi_{E_-^{\vee}}^* \sL^{E^\vee}
    &\simeq \pi_{E_-^{\vee}}^* 0_{E^\vee}^! \FT_{E}(\un)\tag{\corref{cor:contractder}}\\
    &\simeq d^{\vee,!} i^{\vee,*} \FT_{E}(\un)\tag{$\Ex^{*!}$}\\
    &\simeq d^{\vee,!} \FT_{E_+}(i_!\un)\tag{\ref{eq:funA}}\\
    &\simeq d^{\vee,!} d^{\vee}_! (\un) \vb{E_-^\vee}\vb{-E_+}\tag{$\star$}\\
    &\simeq \un\vb{E_-^\vee}\vb{-E_+}\tag{\corref{cor:chlorine}},
  \end{align*}
  where $(\star)$ is from the isomorphism $\FT_{E_+}(i_!\un) \simeq d^\vee_!(\un)\vb{E_-^\vee}\vb{-E_+}$ \eqref{eq:hackneyer}.
  This is $\otimes$-invertible, so we conclude.


\section{Base change and functoriality, part 2}
\label{sec:funct2}

Using involutivity, we conclude the proofs of Propositions~\ref{prop:funbase} and \ref{prop:fun}.
Specifically, we will use the fact that the functor
\begin{equation*}
  \FT_{E^\vee}(\bullet)\tw{-E}
\end{equation*}
is inverse to $\FT_E$ (\corref{cor:inverse}).

\ssec{Proof of~\texorpdfstring{\eqref{eq:BC_*}}{BC\_*}}

  Let the notation be as in \propref{prop:funbase}.
  Recall the isomorphism
  \begin{equation*}
    f^*_E((-)\tw{E}) \simeq f_E^*(-)\tw{E'}
  \end{equation*}
  of \lemref{lem:twistbase}, proven in \ssecref{ssec:basechangetwist1}.
  Passing to right adjoints, we have:
  \begin{equation}\label{eq:horsepond}
    f_{E,*}(-)\tw{-E} \simeq f_{E,*}((-)\tw{-E'})
  \end{equation}

  Recall also the $*$-base change formula \eqref{eq:BC^*}
  \begin{equation*}
    f_{E^\vee}^* \FT_E \simeq \FT_{E'} f_E^*
  \end{equation*}
  proven in Subsect.~\ref{ssec:basechange1}.
  Passing to right adjoints yields
  \begin{equation*}
    f_{E,*} (\FT_{E'^\vee}(-)\tw{-E'})
    \simeq \FT_{E^\vee}(f_{E^\vee,*}(-))\tw{-E}.
  \end{equation*}
  Pulling out the twist using \eqref{eq:horsepond}, we deduce
  \begin{equation*}
    f_{E,*} (\FT_{E'^\vee}(-))
    \simeq \FT_{E^\vee}(f_{E^\vee,*}(-)).
  \end{equation*}
  Equivalently, replacing $E$ by $E^\vee$ gives the formula
  \begin{equation}
    f_{E^\vee,*} (\FT_{E'}(-))
    \simeq \FT_{E}(f_{E,*}(-)).
  \end{equation}

\ssec{Proof of \texorpdfstring{\eqref{eq:BC^!}}{(BC\^!)}}

  This follows from \eqref{eq:BC_!} exactly as above.

\ssec{Proof of~\texorpdfstring{\eqref{eq:funB}}{(Fun!)}}

  Passing to right adjoints from \eqref{eq:funA}
  \begin{equation*}
    \Ex^{*,\FT} : \phi^{\vee,*} \circ {\FT_{E'}} \to {\FT_{E}} \circ \phi_!
  \end{equation*}
  yields the canonical isomorphism
  \begin{equation}\label{eq:lanced}
    \phi^! \circ {\FT_{E^\vee}\tw{-E^\vee}} \to {\FT_{E'^\vee}\tw{-E'^\vee}} \circ \phi^\vee_*.
  \end{equation}
  Equivalently, applying this to $\phi^\vee$ in place of $\phi$ we get the canonical isomorphism
  \begin{equation}
    \Ex^{!,\FT} : \phi^{\vee,!} \circ {\FT_{E'}\tw{-E'}} \to {\FT_{E}\tw{-E}} \circ \phi_*.
  \end{equation}

\ssec{Proof of~\texorpdfstring{\eqref{eq:funA'}}{(Fun*)}}

  Begin with the isomorphism \eqref{eq:lanced} above.
  Applying $\FT_{E'}$ on the left and $\FT_{E}$ on the right, we get:
  \begin{equation*}
    \Ex^{\FT,!} : {\FT_{E'}} \circ \phi^!
    \to \phi^\vee_* \circ {\FT_{E}}.
  \end{equation*}

\ssec{Proof of~\texorpdfstring{\eqref{eq:funB'}}{(Fun!)}}

  Begin with the isomorphism \eqref{eq:funA}:
  \begin{equation*}
    \Ex^{*,\FT} : \phi^{\vee,*} \circ {\FT_{E'}} \to {\FT_{E}} \circ \phi_!.
  \end{equation*}
  Applying $\FT_{E'}\tw{-E}$ on the left and $\FT_{E}\tw{-E}$ on the right, we get:
  \begin{equation*}
    \Ex^{\FT,*} : {\FT_{E'}\tw{-E'}} \circ \phi^*
    \to \phi^\vee_! \circ {\FT_{E}\tw{-E}}.
  \end{equation*}

\ssec{Proof of Lemma~\ref{lem:twistbase}}
\label{ssec:basechangetwist2}

  Let $f : S' \to S$ be a morphism and $E' := E \fibprod_S S' \in \DVect(S')$.
  We prove the remaining isomorphisms of \lemref{lem:twistbase}.
  We already have the isomorphisms
  \begin{equation}\label{eq:cankerwort}
    f^*_E((-)\tw{E}) \simeq f_E^*(-)\tw{E'},
    \qquad
    f_{E,!}(-)\tw{E} \simeq f_{E,!}((-)\tw{E'})
  \end{equation}
  by \ssecref{ssec:basechangetwist1}.
  
  \sssec{$*$-Push}
  The claim
  \begin{equation*}
    f_{E,*}(-)\tw{E} \simeq f_{E,*}((-)\tw{E'})
  \end{equation*}
  which is equivalent by passage to left adjoints to
  \begin{equation*}
    f_E^*(-)\vb{-E'} \simeq f_E^*((-)\tw{-E}).
  \end{equation*}
  Applying $\tw{-E'}$ to both sides, this is equivalent to
  \begin{equation*}
    f_E^*(-) \simeq f_E^*((-)\tw{-E})\tw{E}.
  \end{equation*}
  But this holds by \eqref{eq:cankerwort}.

  \sssec{$!$-Pull}
  This follows from the $!$-push version by the same argument as above.

\appendix

\section{The contraction lemma}
\label{sec:equivariant}

  The following is well-known in the case of a separated morphism of schemes.
  Thanks to L.~Mann and J.~Scholbach for pointing out to me the much more general statement in \cite[Thm.~C.5.3]{DrinfeldGaitsgoryCompact} (and that the same argument works for an arbitrary topological weave) which we restate below.
  (In the original draft, we gave an argument just for the case of derived vector bundles.)

  \begin{prop}[Contraction lemma]\label{prop:contract}
    Let $p : X \to S$ be a morphism of derived Artin stacks and $s : S \to X$ a section.
    Suppose there is an $\A^1$-homotopy $\A^1 \times X \to X$ between $\id_X$ and $s \circ p$, so that the diagram
    \[ \begin{tikzcd}[matrix yscale=0.1,matrix xscale=1.3]
      X \ar{rd}{i_0}\ar[bend left,swap]{rrd}{s\circ p}
      &
      &
      \\
      & X \times \A^1 \ar{r}
      & X
      \\
      X \ar[swap]{ru}{i_1}\ar[bend right]{rru}{\id_X}
      &
      &
    \end{tikzcd} \]
    commutes.
    Then the canonical morphisms
    \begin{equation*}
      p_* \xrightarrow{\unit} p_*s_*s^* \simeq s^*,
      \qquad
      s^! \simeq p_!s_!s^! \xrightarrow{\counit} p_!
    \end{equation*}
    are invertible on $\Gm$-equivariant sheaves.
  \end{prop}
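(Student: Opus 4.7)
We treat the first statement; the second is formally analogous, using $\pi_!\pi^!\simeq\id$ (which follows from $\pi_*\pi^*\simeq\id$ and Poincaré duality for the smooth projection $\pi$) in place of $\pi_*\pi^*\simeq\id$. The overall strategy is to reduce the two isomorphisms to a direct consequence of the axiom of $\A^1$-homotopy invariance together with localization, exploiting the $\bG_m$-equivariance to rigidify the comparison between the two ``fibres'' of the homotopy.

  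Composing with the homotopy $h$ and using $h\circ i_0 = s\circ p$ and $h\circ i_1 = \id_X$, we obtain
  \begin{equation*}
    p^* s^* \simeq (s\circ p)^* \simeq (h\circ i_0)^* = i_0^*\circ h^*,
    \qquad
    \id \simeq (h\circ i_1)^* = i_1^*\circ h^*.
  \end{equation*}
  By adjunction, the first statement is thus equivalent to the assertion that $i_0^*\circ h^*\simeq i_1^*\circ h^*$ as functors $\D^{\bG_m}(X)\to\D^{\bG_m}(X)$, with the comparison realized by the relevant unit map. The projection $\pi : \A^1\times X\to X$ is a (trivial) vector bundle, so by axiom~\itemref{item:weave/htp} the unit $\id\to\pi_*\pi^*$ is invertible, i.e.\ $\pi^*$ is fully faithful. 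Moreover $\pi\circ i_0 = \pi\circ i_1 = \id_X$, so that $i_0^*\pi^* \simeq i_1^*\pi^* \simeq \id$.

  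The main technical step is therefore to show that for $\mathcal{F}\in\D^{\bG_m}(X)$, the object $h^*(\mathcal{F})\in\D^{\bG_m}(\A^1\times X)$ lies in the essential image of $\pi^*$; granted this, writing $h^*(\mathcal{F}) \simeq \pi^*(\mathcal{G})$ with $\mathcal{G} := \pi_*h^*(\mathcal{F})$, we deduce $i_0^*h^*(\mathcal{F})\simeq\mathcal{G}\simeq i_1^*h^*(\mathcal{F})$, as required. To prove the essential-image claim, consider the open--closed decomposition $j : \bG_m\times X \hook \A^1\times X \hookleftarrow \{0\}\times X : i_0$ and the associated localization triangle \eqref{eq:loc} applied to $h^*(\mathcal{F})$. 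The $\bG_m$-equivariance with $\bG_m$ acting on $\A^1$ by scaling makes the open orbit $\bG_m\times X$ a torsor trivialized by the unit $i_1$, so that equivariantly $[\bG_m\times X/\bG_m]\simeq [X/\bG_m]$ and $j^*h^*(\mathcal{F})$ is identified with $\mathcal{F}$ pulled back via projection. Combining this with the vanishing of the $i_0^*$ contribution after applying $\pi_!$ (a consequence of $\pi_!\pi^*$-type computations on $[\A^1/\bG_m]$ via homotopy invariance), one sees that the triangle exhibits $h^*(\mathcal{F})$ as $\pi^*$ of a canonically determined object of $\D^{\bG_m}(X)$.

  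I expect the hardest part to be in the last step: making the identification $h^*(\mathcal{F})\simeq \pi^*\pi_*h^*(\mathcal{F})$ coherent and compatible with the natural unit morphism of the original statement. Without the $\bG_m$-equivariance this fails (since $\A^1$-homotopic maps do not in general induce isomorphic pullbacks in a non-$\A^1$-local setting); the $\bG_m$-equivariance supplies precisely the rigidity needed, via the contractibility of $[\A^1/\bG_m]$ onto $B\bG_m$. This is essentially the form of the argument in \cite[Thm.~C.5.3]{DrinfeldGaitsgoryCompact}, which applies verbatim in the generality of a topological weave since it relies only on axioms \itemref{item:weave/loc} and \itemref{item:weave/htp}.
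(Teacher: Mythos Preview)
The paper itself gives no proof of this proposition: it simply records that the statement and its argument are taken from \cite[Thm.~C.5.3]{DrinfeldGaitsgoryCompact}, noting that the proof there goes through verbatim for any topological weave. Your proposal ultimately defers to the same reference, so at that level you and the paper agree.

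That said, your attempted sketch of the argument has a genuine gap. The step ``by adjunction, the first statement is thus equivalent to the assertion that $i_0^*\circ h^*\simeq i_1^*\circ h^*$'' is not justified: an isomorphism $p^*s^*\simeq\id$ (which is what $i_0^*h^*\simeq i_1^*h^*$ unpacks to) does not formally imply that the specific unit map $p_*\to s^*$ is invertible. To pass from the former to the latter one would already need $p^*$ to be fully faithful, which is part of what is being proved (it follows \emph{from} $p_*\simeq s^*$ via $s^*p^*=\id$, not the other way around). You do flag a closely related issue later (``making the identification \ldots compatible with the natural unit morphism''), which shows you sense the problem, but this means your sketch does not stand on its own before invoking the reference. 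The actual Drinfeld--Gaitsgory argument is organized somewhat differently: rather than first producing an abstract isomorphism of endofunctors of $\D^{\Gm}(X)$ and then trying to match it with the unit, one tracks the specific natural transformation through the $\A^1$-interpolation from the outset.
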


  \begin{cor}\label{cor:contractder}
    For every derived Artin stack $X$ and every derived vector bundle $E$ over $X$, the natural transformations
    \begin{align*}
      &\pi_{E,*} \xrightarrow{\unit} \pi_{E,*}0_{E,*}0_E^* \simeq 0_E^*\\
      &0_E^! \simeq \pi_{E,!}0_{E,!}0_E^! \xrightarrow{\counit} \pi_{E,!}
    \end{align*}
    are invertible on $\Gm$-equivariant sheaves.
    In particular, the functors $\pi_E^*$, $\pi_E^!$, $0_{E,*}$, and $0_{E,!}$ are all fully faithful on $\Gm$-equivariant sheaves.
  \end{cor}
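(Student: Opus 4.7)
The plan is to deduce this corollary as a direct application of the contraction lemma (\propref{prop:contract}), with $p = \pi_E$ and $s = 0_E$. The only thing needed is to exhibit an $\A^1$-homotopy on $E$ between $\id_E$ and $0_E \circ \pi_E$.

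The natural candidate is \emph{scalar multiplication}. Every derived vector bundle $E$ carries the $\bG_m$-scaling action $\A^1_S \times_S E \to E$ extending the $\bG_m$-action (indeed, since $E = \V(\sE)$ for a perfect complex $\sE$, the scaling action on sections of $\sE^\vee$ canonically extends along $\bG_m \hook \A^1$). Restricting to $t=1$ recovers $\id_E$, while restricting to $t=0$ recovers $0_E \circ \pi_E$. This scaling morphism is manifestly $\bG_m$-equivariant (with respect to the diagonal action on $\A^1_S \times_S E$ where $\bG_m$ acts on $\A^1_S$ with weight $1$ and on $E$ with its given scaling action). Thus the hypotheses of \propref{prop:contract} are satisfied after base change to $X$, so the unit $\pi_{E,*} \to 0_E^*$ and counit $0_E^! \to \pi_{E,!}$ are invertible on $\Gm$-equivariant sheaves.

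For the fully faithfulness assertions, it suffices to check that the relevant unit or counit morphisms are invertible. The invertibility of $\pi_{E,*} \to 0_E^* = 0_E^* \pi_E^* \pi_{E,*}$ (the first factor of which, via $\pi_{E,*}\pi_E^* \simeq \id$ by adjoint transposition, gives $\id \to 0_E^*\pi_E^*$) shows $\pi_E^*$ is fully faithful. Applying this to $E^\vee$ in the symmetric role and using the pairs of adjunctions $(\pi_E^*, \pi_{E,*})$, $(\pi_{E,!}, \pi_E^!)$, $(0_{E,!}, 0_E^!)$, $(0_E^*, 0_{E,*})$, one checks all four functors are fully faithful: indeed $\pi_{E,*}\pi_E^* \simeq 0_E^*\pi_E^* = \id$ gives fully faithfulness of $\pi_E^*$, dually $\pi_{E,!}\pi_E^! \simeq 0_E^!\pi_E^! = \id$ gives fully faithfulness of $\pi_E^!$, and the $0_{E,!}$, $0_{E,*}$ cases follow from $0_E^!0_{E,!} \simeq 0_E^!\pi_E^! \simeq \id$ (via the counit isomorphism) and its dual.

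There is no real obstacle here; the only point requiring a line of comment is that scalar multiplication makes sense for a derived vector bundle in full generality (not only for classical vector bundles). This is immediate from the description $E = \V(\sE)$, since the multiplication map $\sO \otimes \sE \to \sE$ (i.e., the $\sO$-module structure) dualizes to provide the scaling morphism $\A^1_S \times_S E \to E$ functorially in $\sE \in \Perf(S)$.
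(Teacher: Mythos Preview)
Your approach is essentially the same as the paper's: apply the contraction lemma with $p=\pi_E$, $s=0_E$, and scalar multiplication as the $\A^1$-homotopy, then deduce the fully faithfulness statements by evaluating the isomorphisms $\pi_{E,*}\simeq 0_E^*$ and $0_E^!\simeq \pi_{E,!}$ on the relevant functors. Your exposition of the second step is a bit garbled, though: the reference to $E^\vee$ is irrelevant, and where you write $0_E^!0_{E,!}\simeq 0_E^!\pi_E^!$ you presumably mean $0_E^!0_{E,!}\simeq \pi_{E,!}0_{E,!}$; the paper makes this step precise via a small commutative triangle checking that the resulting isomorphism really is the adjunction unit (resp.\ counit), which you should also verify rather than just asserting.
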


  \begin{proof}
    The first claim is a special case of \propref{prop:contract}.
    For every $\sF \in \D^{\Gm}(X)$ there is a commutative diagram
    \[ \begin{tikzcd}
      \sF \ar{r}{\mrm{unit}_{\pi_E}}\ar[equals]{rd}
      & \pi_{E,*}\pi_E^*(\sF) \ar{d}{\mrm{unit}_{0_E}}
      \\
      & 0_E^*\pi_E^*(\sF)
    \end{tikzcd} \]
    where the vertical arrow is invertible by the first claim.
    This shows that $\unit : \id \to \pi_{E,*} \pi_E^*$ is invertible on $\Gm$-equivariant sheaves.
    Similarly, on $\Gm$-equivariant sheaves, the counit $\pi_{E,!}\pi_E^! \to \id$ is identified with the tautological isomorphism $0^!\pi^! \simeq \id$; the counit $0_E^*0_{E,*} \to \id$ is identified with $\pi_{E,*} 0_{E,*} \simeq \id$; and the unit $\id \to 0_{E}^! 0_{E,!}$ is identified with $\id \simeq \pi_{E,!} 0_{E,!}$.
  \end{proof}

  \begin{cor}\label{cor:chlorine}
    Let $E \in \DVect(S)$.
    For any cartesian square
    \begin{equation*}
      \begin{tikzcd}
        X_0 \ar{r}{i}\ar{d}{f_0}
        & X \ar{d}{f}
        \\
        S \ar{r}{0_E}
        & E
      \end{tikzcd}
    \end{equation*}
    where $f$ is smooth, the unit $\un_X \to i^!i_!(\un_X)$ is invertible.
  \end{cor}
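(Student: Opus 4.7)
The plan is to combine Poincaré duality for the smooth morphism $f$ with the fully faithfulness of $0_{E,!}$ (Corollary~\ref{cor:contractder}). Since $f$ is smooth and the square is cartesian, $f_0$ is also smooth, and the relative tangent bundles are compatible: $i^*T_f \simeq T_{f_0}$.

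First, I would rewrite $i_!(\un_{X_0})$ using $*$-base change for the cartesian square together with $f_0^*(\un_S) \simeq \un_{X_0}$:
\begin{equation*}
  i_!(\un_{X_0}) \simeq i_!\,f_0^*(\un_S) \simeq f^*\,0_{E,!}(\un_S).
\end{equation*}
Applying $i^!$ and using Poincaré duality to rewrite $i^!\,f^* \simeq i^!\,f^!\vb{-T_f} \simeq (f \circ i)^!\vb{-T_f} = (0_E \circ f_0)^!\vb{-T_f} \simeq f_0^!\,0_E^!\vb{-T_f}$, I would then obtain
\begin{equation*}
  i^!\,i_!(\un_{X_0}) \simeq f_0^!\,0_E^!\,0_{E,!}(\un_S)\vb{-T_f}.
\end{equation*}

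Next, Corollary~\ref{cor:contractder} collapses $0_E^!\,0_{E,!}(\un_S)$ to $\un_S$, while Poincaré duality for $f_0$ gives $f_0^!(\un_S) \simeq \un_{X_0}\vb{T_{f_0}}$. Because $i^*T_f \simeq T_{f_0}$, the Thom twists $\vb{T_{f_0}}$ and $\vb{-T_f}$ cancel, and the composite becomes $\un_{X_0}$.

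The main obstacle is verifying that this chain of canonical isomorphisms genuinely coincides with the adjunction unit $\un_{X_0} \to i^!\,i_!(\un_{X_0})$. This reduces to the standard compatibility of the $*$-base change isomorphism $f^*\,0_{E,!} \simeq i_!\,f_0^*$ with the units of the $(0_{E,!},0_E^!)$ and $(i_!,i^!)$ adjunctions, together with functoriality of Poincaré duality under the composition $f \circ i = 0_E \circ f_0$. This is routine but tedious bookkeeping in the six-functor formalism rather than a conceptual difficulty.
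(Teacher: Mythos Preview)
Your argument is correct and is essentially the paper's proof with one layer unpacked. The paper applies $f_0^*$ to the invertible unit $\id \to 0_E^! 0_{E,!}$ from Corollary~\ref{cor:contractder} and then uses the exchange isomorphisms $\Ex^*_!$ and $\Ex^{*!}$ (the latter valid because $f$ is smooth) to identify the result with the unit $f_0^* \to i^! i_! f_0^*$; evaluating at $\un_S$ gives the claim. Your use of Poincar\'e duality twice, together with $i^*T_f \simeq T_{f_0}$, is exactly how one constructs $\Ex^{*!}: f_0^* 0_E^! \simeq i^! f^*$ for smooth $f$, so you have simply unfolded that isomorphism. The compatibility check you flag as the ``main obstacle'' is the same one implicit in the paper's phrase ``the result is identified with $\unit$''; citing $\Ex^{*!}$ directly makes this bookkeeping marginally lighter, but there is no substantive difference.
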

  \begin{proof}
    Apply $f_0^*$ on the left to the isomorphism $\unit : \id \to 0_E^! 0_{E,!}$ (\corref{cor:contractder}).
    Under the isomorphisms $\Ex^*_!$ and $\Ex^{*!}$ (the latter since $f$ is smooth), the result is identified with $\unit : f_0^* \to i^! i_! f_0^*$.
  \end{proof}


\section{Computations on \texorpdfstring{$\quo{\A^1}$}{A1/Gm} and \texorpdfstring{$\quo{\A^1 \times \quo{\A^1}}$}{A1/Gm x A1/Gm}}
\label{sec:j}

We lift some simple computations from \cite{Laumon} (namely, Lemmas~1.4, 3.2, 3.3, and 3.4 of \emph{op. cit.}) to the generality of topological weaves.

\ssec{The sheaf \texorpdfstring{$\quo{j}_*(\un)$}{j\_*(1)}}

  We adopt the following notation:
  \begin{equation*}
    \begin{tikzcd}
      S \ar{r}{i}\ar[equals]{rd}
      & \A^1_S \ar[leftarrow]{r}{j}\ar{d}{p}
      & \bG_{m,S} \ar{ld}{q}
      \\
      & S &
    \end{tikzcd}
  \end{equation*}
  where $i$ is the zero section and $j$ is its complement.
  We consider the $\Gm$-scaling quotient of this whole diagram, writing the resulting morphisms as $\quo{i} : \quo{S} \to \quo{\A^1_S}$, etc.

  We record some basic observations about the sheaf $j_*(\un) \in \D^{\Gm}(\A^1_S)$, or more precisely
  \begin{equation*}
    \quo{j}_*(\un) \in \D(\quo{\A^1_S})
  \end{equation*}
  where $\quo{j} : S = \quo{\bG_{m,S}} \hook \quo{\A^1_S}$.

  \begin{prop}\label{prop:j_*(1)}\leavevmode
    \begin{thmlist}
      \item
      \emph{Constructibility.}
      The sheaf $\quo{j}_*(\un)$ is constructible.

      \item
      \emph{Base change.}
      For any morphism $f : S' \to S$, let $j' : S' = \quo{\bG_{m,S'}} \hook \quo{\A^1_{S'}}$ denote the base change of $j$ along $f' : \quo{\A^1_{S'}} \to \quo{\A^1_S}$.
      Then the canonical morphism
      \begin{equation*}
        \Ex^*_* : f'^* \quo{j}_*(\un) \to j'_*(\un)
      \end{equation*}
      is invertible.

      \item\label{item:retolerate}
      There is an exact triangle
      \[
        \quo{j}_!
        \to \quo{j}_*
        \to (\quo{i} \circ \quo{q})_![1],
      \]
      and we have $\quo{p}_!\,\quo{j}_* \simeq 0$ in $\D(\quo{S})$.

      \item\label{item:binh}
      There is a canonical isomorphism
      \[ \quo{j}_*(\un) \simeq u_! j_{1,*}(\un)[1] \]
      in $\D(\quo{\A^1})$, where $j_1 : \A^1 \setminus \{1\} \to \A^1$ is the complement of the unit section and $u : \A^1_S \twoheadrightarrow \quo{\A^1_S}$ is the quotient morphism.
    \end{thmlist}
  \end{prop}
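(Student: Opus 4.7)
I would treat (iii) first, then deduce (i) and (ii) from it formally, and finally handle (iv) by a separate argument. The crucial technical input throughout (iii) is the contraction lemma (\propref{prop:contract}) applied to $\quo{p}: \quo{\A^1_S} \to \quo{S}$ with section $\quo{i}$, using the scalar multiplication $\A^1 \times \A^1_S \to \A^1_S$ as the $\Gm$-equivariant $\A^1$-homotopy between $\id$ and $\quo{i}\circ\quo{p}$. This yields, on $\Gm$-equivariant sheaves, the pair of identifications $\quo{i}^* \simeq \quo{p}_*$ and $\quo{i}^! \simeq \quo{p}_!$.

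For (iii), I would begin with the two localization triangles of \eqref{eq:loc}:
\begin{equation*}
  (A)\colon\ \quo{i}_*\,\quo{i}^!(\un) \to \un \to \quo{j}_*(\un),
  \qquad
  (B)\colon\ \quo{j}_!(\un) \to \un \to \quo{i}_*(\un).
\end{equation*}
Applying the natural transformation $(B)$ to $\quo{j}_*(\un)$ and using full faithfulness of $\quo{j}_*$ ($\quo{j}^*\quo{j}_* \simeq \id$) yields the desired triangle
$\quo{j}_!(\un) \to \quo{j}_*(\un) \to \quo{i}_*\quo{i}^*\quo{j}_*(\un)$. It remains to identify $\quo{i}^*\quo{j}_*(\un) \simeq \quo{q}_!(\un)[1]$. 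For this, apply $\quo{i}^*$ to $(A)$ and use contraction to replace $\quo{i}^!(\un)$ by $\quo{p}_!(\un)$, producing a triangle $\quo{p}_!(\un) \to \un \to \quo{i}^*\quo{j}_*(\un)$. Next apply $\quo{p}_!$ to $(B)$; using $\quo{p}\circ\quo{j} = q$ and $\quo{p}\circ\quo{i} = \id$ this produces a triangle $\quo{q}_!(\un) \to \quo{p}_!(\un) \to \un$. Matching the two resulting triangles along the map $\quo{p}_!(\un) \to \un$ (which coincide, via the contraction isomorphism, with the relevant (co)units) gives $\quo{i}^*\quo{j}_*(\un) \simeq \quo{q}_!(\un)[1]$. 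The second assertion $\quo{p}_!\,\quo{j}_*(\un) \simeq 0$ is immediate by applying $\quo{p}_!$ to $(A)$: the resulting leftmost map becomes $\quo{i}^!(\un) \to \quo{p}_!(\un)$, which is an isomorphism by contraction, forcing the cofiber to vanish.

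Given (iii), parts (i) and (ii) are formal. The triangle realizes $\quo{j}_*(\un)$ as an extension of $\quo{j}_!(\un)$ by $(\quo{i}\circ\quo{q})_!(\un)[1]$, both of which are $!$-pushforwards of the constructible unit along finite type morphisms; this gives (i). For (ii), each term in the triangle satisfies the universal $*$-base change formula: $\quo{j}_!$ and $\quo{q}_!$ via the six-functor formalism, and $\quo{i}_* = \quo{i}_!$ via proper base change (since $\quo{i}$ is a closed immersion). The naturality of the triangle in $S$ then upgrades these pointwise isomorphisms to $f'^*\quo{j}_*(\un) \simeq \quo{j'}_*(\un)$.

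For (iv), the strategy is to check the claimed isomorphism after pulling back along the smooth atlas $u: \A^1_S \to \quo{\A^1_S}$, noting that $u^*$ is fully faithful on $\Gm$-equivariant sheaves. On the left, $u^*\quo{j}_*(\un) \simeq j_*(\un_{\Gm_S})$ by $*$-base change along the open immersion $\quo{j}$. On the right, $u^*u_! j_{1,*}(\un)$ unwinds via base change along the cartesian square of $u$ with itself — whose fiber product is the $\Gm$-action groupoid $\Gm \times \A^1 \rightrightarrows \A^1$ — reducing the comparison to a computation about how $j_{1,*}(\un)$ varies under $\Gm$-translation. The main obstacle is precisely that translation by the unit is \emph{not} $\Gm$-equivariant; tracking its effect in the base change (together with projection along the smooth morphism $\Gm\times\A^1 \to \A^1$ of relative dimension $1$) is what produces the shift by $[1]$. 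This step mirrors the argument in \cite[Lem.~3.2]{Laumon} and is the most delicate piece of the proposition.
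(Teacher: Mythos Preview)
Your treatment of (iii), and the deduction of (i) and (ii) from it, is correct and essentially matches the paper. The paper streamlines one step: rather than matching two triangles, it applies $\quo{p}_!$ directly to the localization triangle $\quo{j}_! \to \quo{j}_* \to \quo{i}_*\quo{i}^*\quo{j}_*$, observes that the middle term vanishes via the contraction identity $\quo{p}_!\,\quo{j}_* \simeq \quo{i}^!\,\quo{j}_* \simeq 0$ (base change), and reads off $\quo{i}^*\quo{j}_*(\un) \simeq \quo{q}_!(\un)[1]$ from the resulting two-term triangle. Your route via two matched triangles is equivalent.

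For (iv) there is a genuine gap. The functor $u^* : \D(\quo{\A^1_S}) \to \D(\A^1_S)$ is \emph{conservative} (as $u$ is a smooth surjection), not fully faithful; already for $S$ a point, $\D(B\Gm) \to \D(\mathrm{pt})$ is not fully faithful. Conservativity lets you verify that a \emph{given} morphism is invertible after $u^*$, but it does not let you manufacture an isomorphism in $\D(\quo{\A^1_S})$ from one that exists only upstairs. Your outline never constructs a map between $\quo{j}_*(\un)$ and $u_! j_{1,*}(\un)[1]$ before pulling back, so the argument as written does not go through. Moreover, the computation of $u^* u_! j_{1,*}(\un)$ over the action groupoid $\Gm \times \A^1$ is more involved than you suggest.

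The paper's approach to (iv) is different and avoids the atlas pullback entirely. One first computes $\quo{j}^*\big(u_! j_{1,*}(\un)\big) \simeq \un[-1]$ by base change along the cartesian square with vertical maps $q : \Gm_S \to S$ and $u$, reducing to the elementary identity $q_!\, j^* j_{1,*}(\un) \simeq \un[-1]$. This produces, via the unit of $(\quo{j}^*,\quo{j}_*)$, a canonical morphism
\[
  u_! j_{1,*}(\un) \longrightarrow \quo{j}_*\,\quo{j}^*\big(u_! j_{1,*}(\un)\big) \simeq \quo{j}_*(\un)[-1].
\]
Invertibility is then checked by localization: it suffices that $\quo{i}^!\big(u_! j_{1,*}(\un)\big) = 0$. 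By the contraction lemma $\quo{i}^! \simeq \quo{p}_!$, so this reduces to $p_! j_{1,*}(\un) = 0$ on $\A^1_S$, which is another short localization computation. Thus the contraction lemma is the key input for (iv) just as for (iii), and the argument stays on $\quo{\A^1_S}$ throughout.
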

  \begin{proof}
  We have the localization triangle
  \begin{equation}\label{eq:harmonica}
    \quo{j}_!
    \simeq \quo{j}_! \quo{j}^* \quo{j}_*
    \xrightarrow{\counit} \quo{j}_*
    \xrightarrow{\unit} \quo{i}_* \quo{i}^* \quo{j}_*.
  \end{equation}
  Applying $\quo{p}_!$ yields
  \begin{equation*}
    \quo{q}_!
    \to \quo{p}_! \quo{j}_*
    \xrightarrow{\unit} \quo{p}_! \quo{i}_* \quo{i}^* \quo{j}_*
    \simeq \quo{i}^* \quo{j}_*.
  \end{equation*}
  We have $\quo{p}_! \quo{j}_* \simeq \quo{i}^! \quo{j}_* \simeq 0$ by the contraction lemma (\propref{prop:contract}) and base change formula.
  We deduce a canonical isomorphism
  \begin{equation*}
    \quo{i}^* \quo{j}_* 
    \simeq \quo{q}_![1].
  \end{equation*}
  In particular, we can rewrite \eqref{eq:harmonica} as an exact triangle
  \begin{equation*}
    \quo{j}_!
    \to \quo{j}_*
    \to \quo{i}_* \quo{q}_![1]
    \simeq \quo{i}_! \quo{q}_![1].
  \end{equation*}

  Since $\quo{j}_!$, $\quo{i}_* = \quo{i}_!$, and $\quo{q}_!$ preserve constructible objects, it follows that $\quo{j}_!$ preserves constructible objects.
  Similarly, the base change formulas for $\quo{j}_!$, $\quo{i}_!$, and $\quo{q}_!$ yield the claimed base change formulas for $\quo{j}_*(\un)$ (we omit verification of commutativity of some diagrams, expressing e.g. the compatibility of $\Ex^*_* : f'^* \quo{j_*}(\un) \to j'_*(\un)$ and $\Ex^*_! : j'_!(\un) \to f'^* \quo{j_!}(\un)$).

  For the final claim~\itemref{item:binh}, we begin by observing the canonical isomorphism
  \[ j^* u_! j_{1,*} (\un) \simeq \un[-1] \]
  using the base change formula $j^* u_! \simeq q_! j^*$ for the square
  \[ \begin{tikzcd}
    \bG_{m,S} \ar{r}{j}\ar{d}{q}
    & \A^1_S \ar{d}{u}
    \\
    S = \quo{\bG_{m,S}} \ar{r}{\quo{j}}
    & \quo{\A^1_S}
  \end{tikzcd} \]
  and the observation that
  \[ q_! j^* j_{1,*}(\un) \simeq \un[-1] \]
  which is a straightforward computation using the base change formula and localization.

  It will then suffice to show that the unit morphism
  \[ u_! j_{1,*} (\un) \to \quo{j}_*\quo{j}^*(u_! j_{1,*} (\un)) \simeq \quo{j}_*(\un) \]
  is invertible.
  By localization, this is equivalent to showing that $\quo{i}^! (u_! j_{1,*} (\un)) \simeq 0$.
  By the contraction lemma (\propref{prop:contract}), we have
  \[ \quo{i}^! (u_! j_{1,*} (\un)) \simeq \quo{p}_! u_! j_{1,*} (\un) \simeq \quo{q}_! p_! j_{1,*}(\un) = 0\]
  since $p_! j_{1,*}(\un) = 0 \in \D(S)$ by a straightforward localization argument.
  \end{proof}

\ssec{The square of \texorpdfstring{$\quo{j}_*(\un)$}{j\_*(1)}}

  \sssec{Künneth formula}

    \begin{lem}\label{lem:kunn}
      There is a canonical isomorphism
      \[ \quo{j}_*(\un) \boxtimes_S \quo{j}_*(\un) \simeq (\quo{j} \fibprod_S \quo{j})_*(\un) \]
      in $\D(\quo{\A^1_S} \fibprod_S \quo{\A^1_S})$.
    \end{lem}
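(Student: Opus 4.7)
The plan is to construct a canonical comparison map by adjunction and reduce its invertibility to a classical Künneth formula on the product of affine lines. The localization isomorphism $\quo{j}^*\quo{j}_*(\un)\simeq \un$ gives $(\quo{j}\fibprod_S\quo{j})^*(\quo{j}_*(\un)\boxtimes_S\quo{j}_*(\un))\simeq \un$, so by adjunction there is a canonical map
\begin{equation*}
  \alpha\colon \quo{j}_*(\un)\boxtimes_S\quo{j}_*(\un) \to (\quo{j}\fibprod_S\quo{j})_*(\un),
\end{equation*}
and it suffices to show $\alpha$ is invertible.

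The main idea is to apply the presentation $\quo{j}_*(\un)\simeq u_!j_{1,*}(\un)[1]$ from \propref{prop:j_*(1)}\itemref{item:binh}. Since $!$-pushforward always commutes with external tensor product via the automatic Künneth formula (a direct consequence of the projection formula and base change for $!$), the LHS identifies with $(u\fibprod_S u)_!\bigl(j_{1,*}(\un)\boxtimes_S j_{1,*}(\un)\bigr)[2]$. A parallel analysis of the RHS, running the contraction-lemma argument from the proof of \itemref{item:binh} on the two-fold product $\A^1_S\fibprod_S\A^1_S$, identifies $(\quo{j}\fibprod_S\quo{j})_*(\un)$ with $(u\fibprod_S u)_!(j_1\fibprod_S j_1)_*(\un)[2]$ in a way compatible with $\alpha$. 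The problem thus reduces to establishing the classical Künneth isomorphism
\begin{equation*}
  \beta\colon j_{1,*}(\un)\boxtimes_S j_{1,*}(\un) \to (j_1\fibprod_S j_1)_*(\un)
\end{equation*}
on $\A^1_S\fibprod_S\A^1_S$.

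To verify invertibility of $\beta$, I would stratify $\A^1_S\fibprod_S\A^1_S$ via the closed stratum $(1,1)\colon S\hook \A^1_S\fibprod_S\A^1_S$ together with its two partial strata $\{1\}\fibprod_S\A^1_S$ and $\A^1_S\fibprod_S\{1\}$. On the open stratum $(\A^1_S\setminus\{1\})\fibprod_S(\A^1_S\setminus\{1\})$ both sides restrict to $\un$ and $\beta$ is the identity; on each partial stratum the $*$-restriction of the LHS is computed directly via the localization triangle for $j_{1,*}(\un)$, while the $*$-restriction of the RHS is computed by base change, and the two agree. The main obstacle is the bookkeeping in the second paragraph: ensuring that the identifications of both sides with $(u\fibprod_S u)_!(j_1\fibprod_S j_1)_*(\un)[2]$ are coherently compatible with $\alpha$ and $\beta$. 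Once this coherence is in place, the stratum-by-stratum verification is routine and uses nothing beyond \propref{prop:j_*(1)} and standard six-functor machinery.
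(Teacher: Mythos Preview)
Your strategy of reducing to a K\"unneth formula for $j_{1,*}(\un)$ via \propref{prop:j_*(1)}\itemref{item:binh} is a genuine alternative to the paper's approach, but the execution has two gaps, and the detour ultimately does not simplify the problem.

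\emph{Step~2 is harder than advertised.}  The phrase ``running the contraction-lemma argument from the proof of \itemref{item:binh} on the two-fold product'' glosses over a real difficulty: in the one-variable case the closed complement of $\quo{\bG_{m,S}}$ in $\quo{\A^1_S}$ is the zero section of a line bundle, so the contraction lemma gives $\quo{i}^! \simeq \quo{p}_!$ in one stroke.  In the two-variable case the closed complement of $\quo{\bG_{m,S}}\fibprod_S\quo{\bG_{m,S}}$ is a \emph{union} of two divisors, and vanishing of $i_Z^!$ requires a two-step localization (one variable at a time) together with base-change properties of $j_{1,*}(\un)$ that you have not yet established.  This is roughly the same work the paper does in the proof of \lemref{lem:pnpqnq}.

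\emph{Step~3 is not routine as stated.}  Your claim that on the partial stratum $\{1\}\fibprod_S\A^1_S$ the $*$-restriction of $(j_1\fibprod_S j_1)_*(\un)$ ``is computed by base change'' is incorrect: the preimage of this stratum under $j_1\fibprod_S j_1$ is empty, so naive base change would give zero, which is wrong.  To actually compute this restriction you must factor $(j_1\fibprod_S j_1)_* = (\id\times j_1)_*\circ(j_1\times\id)_*$ and analyze the interaction with $(i_1\times\id)^*$, which again needs a specific base-change property of $j_{1,*}(\un)$ (the analogue of \propref{prop:j_*(1)} for $j_1$) that must be proven separately.  Once you unwind this, the argument becomes a comparison of cofibres of $(j_1)_!\boxtimes(j_1)_! \to (j_1)_*\boxtimes(j_1)_*$ and $(j_1\times j_1)_! \to (j_1\times j_1)_*$ --- exactly the technique the paper applies directly to $\quo{j}$.

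\emph{Comparison with the paper.}  The paper avoids your detour entirely: it works with the triangle $\quo{j}_! \to \quo{j}_* \to (\quo{i}\circ\quo{q})_![1]$ from \propref{prop:j_*(1)}\itemref{item:retolerate}, which expresses $\quo{j}_*(\un)$ as an extension of two $!$-pushforwards.  Since $!$-pushforward satisfies K\"unneth automatically, both cofibres can be computed and matched directly.  Your route through \itemref{item:binh} trades one K\"unneth problem for an equivalent one on $\A^1_S\fibprod_S\A^1_S$, plus the additional burden of Step~2 and the coherence you flag; it does not buy a genuinely easier endgame.
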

    \begin{proof}
      Consider the commutative diagram
      \[\begin{tikzcd}
        \quo{\bG_{m,S}} \ar[leftarrow]{r}{\pr_1}\ar{d}{\quo{j}}
        & \quo{\bG_{m,S}} \fibprod_S \quo{\bG_{m,S}} \ar{r}{\pr_2}\ar{d}{h}
        & \quo{\bG_{m,S}} \ar{d}{\quo{j}}
        \\
        \quo{\A^1_S} \ar[leftarrow]{r}{\pr_1}
        & \quo{\A^1_S} \fibprod_S \quo{\A^1_S} \ar{r}{\pr_2}
        & \quo{\A^1_S}
      \end{tikzcd}\]
      where $h := \quo{j}\fibprod_S\quo{j}$.
      There is a canonical morphism $\quo{j}_*(\un) \boxtimes_S \quo{j}_*(\un) \to h_*(\un)$, see e.g. \cite[2.1.19]{JY21}, which fits in a commutative diagram
      \[\begin{tikzcd}
        \quo{j}_!(\un) \boxtimes \quo{j}_!(\un) \ar{r}{\alpha}\ar[equals]{d}
        & \quo{j}_*(\un) \boxtimes \quo{j}_*(\un)\ar{d}
        \\
        h_!(\un) \ar{r}{\beta}
        & h_*(\un)
      \end{tikzcd}\]
      where the left-hand vertical arrow is the K\"unneth isomorphism for $!$-direct image, see e.g. \cite[Lem.~2.2.3]{JY21}, and the horizontal arrows are induced by the ``forget supports'' transformations for $j$ and $h$.
      If $\sK$ and $\sL$ denote the cofibres of $\alpha$ and $\beta$, respectively, it will suffice to show that the induced morphism $\sK \to \sL$ is invertible.

      The morphism $\alpha$ factors as the composite
      \[
        \quo{j}_!(\un) \boxtimes \quo{j}_!(\un)
        \xrightarrow{\alpha_1} \quo{j}_*(\un) \boxtimes \quo{j}_!(\un)
        \xrightarrow{\alpha_2} \quo{j}_*(\un) \boxtimes \quo{j}_*(\un).
      \]
      Computing the cofibres of $\alpha_1$ and $\alpha_2$ using \propref{prop:j_*(1)}\itemref{item:retolerate}, we get an exact triangle
      \begin{equation}\label{eq:cofalpha}
        (\quo{i} \circ \quo{q})_!(\un) \boxtimes \quo{j}_!(\un) \to \sK \to \quo{j}_*(\un) \boxtimes (\quo{i} \circ \quo{q})_!(\un).
      \end{equation}
      Similarly, since $h = (\id\times\quo{j})\circ(\quo{j}\times\id)$, $\beta$ factors as the composite
      \[
        (\id\times \quo{j})_!(\quo{j}\times \id)_!(\un)
        \xrightarrow{\beta_1} (\id\times \quo{j})_!(\quo{j}\times \id)_*(\un)
        \xrightarrow{\beta_2} (\id\times \quo{j})_*(\quo{j}\times \id)_*(\un).
      \]
      Computing the cofibres of $\beta_1$ and $\beta_2$ using \propref{prop:j_*(1)}\itemref{item:retolerate} (taking $S$ to be $\quo{\bG_{m,S}}$), we get the exact triangle
      \begin{equation}\label{eq:cofbeta}
        (\id\times \quo{j})_!(\quo{i}\circ\quo{q}\times\id)_!(\un)
        \to \sL
        \to (\id\times\quo{i}\circ\quo{q})_!(\quo{j}\times\id)_*(\un).
      \end{equation}

      The exact triangles \eqref{eq:cofalpha} and \eqref{eq:cofbeta} fit into a commutative diagram
      \[\begin{tikzcd}
        (\quo{i} \circ \quo{q})_!(\un) \boxtimes \quo{j}_!(\un) \ar{r}\ar[equals]{d}
        & \sK \ar{r}\ar{d}
        & \quo{j}_*(\un) \boxtimes (\quo{i} \circ \quo{q})_!(\un) \ar[equals]{d}
        \\
        (\id\times \quo{j})_!(\quo{i}\circ\quo{q}\times\id)_!(\un) \ar{r}
        & \sL \ar{r}
        & (\id\times\quo{i}\circ\quo{q})_!(\quo{j}\times\id)_*(\un).
      \end{tikzcd}\]
      The left-hand vertical arrow is the K\"unneth isomorphism for $!$-direct image, see e.g. \cite[Lem.~2.2.3]{JY21}.
      The right-hand vertical arrow is the isomorphism
      \begin{align*}
        \quo{j}_*(\un) \boxtimes (\quo{i} \circ \quo{q})_!(\un)
        &\simeq (\quo{j}\times\id)_*(\un) \otimes (\id\times \quo{i} \circ \quo{q})_!(\un)\\
        &\simeq (\id\times\quo{i} \circ \quo{q})_!(\id\times\quo{i} \circ \quo{q})^*(\quo{j}\times\id)_*(\un)\\
        &\simeq (\id\times\quo{i} \circ \quo{q})_!(\quo{j}\times\id)_*(\un)
      \end{align*}
      coming from the base change and projection formulas.
      The last isomorphism comes from the base change formula of \propref{prop:j_*(1)}, which we apply by taking $S$ to be $\quo{\A^1_S}$, $S'$ to be $\quo{\bG_{m,S}}$, and $f = \quo{i}\circ\quo{q}$.

      This shows that $\sK \to \sL$ is an isomorphism, as desired.
    \end{proof}

    \begin{rem}
      When (i) $S$ is locally noetherian, (ii) $S$ is defined over a base field $k$ which either admits resolution of singularities or whose characteristic is inverted in the weave $\D$, and (iii) $\D$ is continuous and constructibly generated, then \lemref{lem:kunn} can alternatively be deduced as follows.
      Indeed, by assumptions (i) and (iii), \cite[Prop.~3.13]{Weaves} says that it is enough to show the canonical morphism is invertible after $*$-inverse image to every residue field of $S$.
      As $\quo{j}_*(\un)$ is stable under base change in $S$ (\propref{prop:j_*(1)}), we may thus assume that $S$ is the spectrum of a field.
      In that case, the claim is a special case of the K\"unneth formula of \cite[Prop.~2.1.20]{JY21}, proven under assumption (ii).
    \end{rem}

  \sssec{}

  Consider the morphism
  \[ c = (\quo{\pr_1}, \quo{\pr_2}) : \quo{\A^2_S} \to \quo{\A^1_S} \fibprod_S \quo{\A^1_S} \]
  induced by the projections $\A^2 \to \A^1$ (which are $\Gm$-equivariant), which exhibits $\quo{\A^1_S} \fibprod_S \quo{\A^1_S}$ as the quotient of $\quo{\A^2_S}$ by the action $\lambda\cdot(x,y) = (x,\lambda\cdot y)$.

  Let $\Delta \subset \A^2_S$ denote the diagonal, $i_\Delta : \Delta \hook \A^2_S$ the inclusion, and $j_\Delta$ the complement.

  \begin{lem}\label{lem:pnpqnq}
    There is a canonical isomorphism
    \[ (\quo{j} \times \quo{j})_*(\un) \simeq c_! \quo{j}_{\Delta,*}(\un)[1] \]
  \end{lem}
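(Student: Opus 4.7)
The plan is to use base change to identify $\quo{j_\Delta, *}(\un)$ with a pullback of $\quo{j}_*(\un)$, combine with the Künneth formula of \lemref{lem:kunn}, and prove the resulting identity by constructing the morphism via adjunction and checking invertibility via localization (mimicking the strategy of \propref{prop:j_*(1)}\itemref{item:binh}).

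First, the subtraction map $\sigma_0 : \A^2_S \to \A^1_S$, $(x, y) \mapsto y - x$, is $\bG_m$-equivariant for diagonal scaling on the source, and $\Delta = \sigma_0^{-1}(0)$; passing to quotients yields a cartesian square
\[
\begin{tikzcd}
\quo{(\A^2_S \setminus \Delta)} \ar{r}\ar{d}{\quo{j_\Delta}} & S \ar{d}{\quo{j}} \\
\quo{\A^2_S} \ar{r}{\sigma} & \quo{\A^1_S}.
\end{tikzcd}
\]
The base change property of \propref{prop:j_*(1)} holds for arbitrary cartesian squares, since its proof only invokes base change of $!$-pushforwards; this yields $\quo{j_\Delta, *}(\un) \simeq \sigma^* \quo{j}_*(\un)$. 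Combined with \lemref{lem:kunn}, the claim reduces to producing a canonical isomorphism $c_! \sigma^* \quo{j}_*(\un)[1] \simeq \quo{j}_*(\un) \boxtimes_S \quo{j}_*(\un)$ in $\D(\quo{\A^1_S} \fibprod_S \quo{\A^1_S})$.

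To construct a morphism, write $h := \quo{j}\fibprod_S\quo{j}$, which is an open immersion. By adjunction, it suffices to produce an isomorphism $h^* c_! \quo{j_\Delta, *}(\un)[1] \simeq \un_S$. Base change in the cartesian square
\[
\begin{tikzcd}
\bG_{m,S} \ar{r}\ar{d} & \quo{\A^2_S} \ar{d}{c} \\
S \ar{r}{h} & \quo{\A^1_S} \fibprod_S \quo{\A^1_S}
\end{tikzcd}
\]
identifies the left-hand side with the $!$-pushforward to $S$ of $\quo{j_\Delta, *}(\un)|_{\bG_{m,S}}$. The image of $\bG_{m,S}$ meets $\quo{\Delta}$ along the unit section, so by base change this restriction is $(j_1')_*(\un)$ for $j_1' : \bG_{m,S} \setminus \{1\} \hookrightarrow \bG_{m,S}$, and its $!$-pushforward to $S$ gives $\un_S[-1]$ by exactly the identity ``$q_! j^* j_{1,*}(\un) \simeq \un_S[-1]$'' established in the proof of \propref{prop:j_*(1)}\itemref{item:binh}. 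After shifting by $[1]$, adjunction produces the desired morphism $\alpha : c_! \quo{j_\Delta, *}(\un)[1] \to h_*(\un)$.

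Invertibility of $\alpha$ follows from localization for the open--closed decomposition $h \sqcup i_Z$ of $\quo{\A^1_S} \fibprod_S \quo{\A^1_S}$: on the open part $h^*(\alpha)$ is the specified isomorphism by construction, and since $i_Z^! h_*(\un) = 0$ it suffices to show $i_Z^! c_! \quo{j_\Delta, *}(\un) \simeq 0$. Base change along the square with upper-left corner $W := c^{-1}(Z) \subset \quo{\A^2_S}$, the $\bG_{m,\text{diag}}$-quotient of the union of the two coordinate axes in $\A^2_S$, reduces this to a stratum-wise vanishing. Using the identification $\quo{j_\Delta, *}(\un) \simeq \sigma^* \quo{j}_*(\un)$, the two open axis strata contribute zero via the contraction lemma (\propref{prop:contract}) and the origin via the vanishing $\quo{p_!} \quo{j}_*(\un) \simeq 0$ from \propref{prop:j_*(1)}\itemref{item:retolerate}. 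The main obstacle is precisely the final stratum-wise analysis at the origin, where the $\bG_{m,\text{diag}}$-action has a non-trivial stabilizer and careful coordination between the stratification of $W$ and the restriction of $\sigma^* \quo{j}_*(\un)$ is required.
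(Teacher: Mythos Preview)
Your construction of the morphism $\alpha$ via adjunction is correct and essentially matches the paper's strategy. The divergence is in the open/closed decomposition you use to check invertibility: you take the open to be the image of $h = \quo{j} \times \quo{j}$ (both coordinates nonzero) and the closed $Z$ to be the union of the two axes, whereas the paper takes the closed to be only the origin $\quo{S}\times_S\quo{S}$ and the open $U$ its complement, which it then covers by two copies of $\quo{\A^1}$ and reduces to \propref{prop:j_*(1)}\itemref{item:binh}.

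Your approach, however, contains a genuine gap at the step ``base change along the square with upper-left corner $W := c^{-1}(Z)$''. The exchange $i_Z^! \, c_! \simeq (c|_W)_! \, i_W^!$ you are invoking is not valid here: $c$ is a $\Gm$-quotient map, hence smooth but \emph{not proper}, and $i_Z$ is not smooth, so neither standard hypothesis for this base change applies. In fact the two sides differ: on the punctured axis $Z_1 = \quo{S}\times_S\quo{\Gm} \simeq B\Gm_S$, one computes $(c|_{c^{-1}(Z_1)})_!\, (i_{c^{-1}(Z_1)})^! \sigma^* \quo{j}_*(\un) \simeq q_!(\un)\vb{-1}$ for $q:S \to B\Gm_S$ the canonical map, and $q_!(\un) \neq 0$. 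So your stratum-wise computation on $W$, even if carried out, would not compute $i_Z^! c_! \quo{j_{\Delta,*}}(\un)$. (The vanishing you want \emph{is} true, but for a different reason: on $U$ the paper identifies $j_b^*\sF$ with $j_{a,*}(\un)[-1]$, whose $!$-restriction to the punctured axes vanishes.)

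The paper avoids this trap precisely by shrinking the closed locus to the origin. There the contraction lemma converts $i_b^!$ into a $!$-\emph{pushforward} $(\quo{p}\times\quo{p})_!$, which does commute with $c_!$; one more application of the contraction lemma on $\quo{\A^2}$ then reduces to the trivial vanishing $\quo{i^{2,!}}\,\quo{j_{\Delta,*}}(\un)=0$. That manoeuvre --- trading $!$-restriction for $!$-pushforward before commuting past $c_!$ --- is the missing idea in your argument.
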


  \begin{rem}
    Let $e : \A^2_S \to \A^1_S$ denote the ``difference'' morphism, given informally by $(x,y) \mapsto x-y$.
    By smooth base change for the square
    \[ \begin{tikzcd}
      \quo{(\A^2_S\setminus\Delta)} \ar{r}{\quo{j_\Delta}}\ar{d}{\quo{e}}
      & \quo{\A^2_S} \ar{d}{\quo{e}}
      \\
      \quo{\bG_{m,S}} \ar{r}{\quo{j}}
      & \quo{\A^1_S}
    \end{tikzcd} \]
    we can write $\quo{e}^* \quo{j}_* (\un) \simeq \quo{j}_{\Delta,*}(\un)$ and hence also
    \begin{equation}
      (\quo{j} \times \quo{j})_*(\un) \simeq c_! \quo{e}^* \quo{j}_* (\un)[1].
    \end{equation}
  \end{rem}

  \begin{proof}[Proof of \lemref{lem:pnpqnq}]
    Consider the commutative diagram
    \[ \begin{tikzcd}
      \bG_{m,S} \fibprod_S \bG_{m,S} \ar{r}\ar[twoheadrightarrow]{d}
      & \A^2_S \setminus \{0\}_S \ar{r}{j^2}\ar[twoheadrightarrow]{d}
      & \A^2_S \ar[twoheadrightarrow]{d}\ar[leftarrow]{r}{i^2}
      & S\ar[twoheadrightarrow]{d}
      \\
      \{1\}_S \fibprod_S \bG_{m,S} \ar{r}\ar[twoheadrightarrow]{d}
      & \quo{(\A^2_S \setminus \{0\}_S)} \ar{r}{\quo{j^2}}\ar[twoheadrightarrow]{d}
      & \quo{\A^2_S} \ar[twoheadrightarrow]{d}{c}\ar[leftarrow]{r}{\quo{i^2}}
      & \quo{S}\ar[twoheadrightarrow]{d}{d}
      \\
      S \ar{r}{j_a}
      & U \ar{r}{j_b}
      & \quo{\A^1_S} \fibprod_S \quo{\A^1_S}\ar[leftarrow]{r}{i_b}
      & \quo{S} \fibprod_S \quo{S}
    \end{tikzcd} \]
    where the bottom row is the quotient of the middle one by the $\Gm$-action which scales the second coordinate (with weight $1$).
    In the two left-hand columns, the horizontal rows are factorizations of $j \times j$, $\quo{(j \times j)}$, and $\quo{j} \times \quo{j}$, respectively.
    In the two right-hand columns, the horizontal rows are complementary open/closed immersions.

    Set
    $$\sF := c_!\quo{j_{\Delta,*}}(\un) \in \D(\quo{\A^1_S}\fibprod_S\quo{\A^1_S}).$$
    We claim there are isomorphisms:
    \begin{enumerate}
      \item\label{item:aouuhp/i} $i_b^!(\sF) \simeq 0$,
      \item\label{item:aouuhp/j} $j_b^*(\sF) \simeq {j}_{a,*}(\un)[-1]$.
    \end{enumerate}
    By localization it will follow that the unit
    \[ \mrm{unit} : \sF \to j_{b,*}j_b^*(\sF) \simeq j_{b,*}j_{a,*}(\un)[-1] \simeq (j \times j)_*(\un)[-1] \]
    is invertible, as claimed.
    
    \emph{Proof of \itemref{item:aouuhp/i}}.
    Since $\quo{i^2}$ is the zero section of the vector bundle $\quo{p^2} : \quo{\A^2_S} \to \quo{S}$ (where $p^2 : \A^2_S \to S$ is the projection), the contraction lemma (\propref{prop:contract}) yields an isomorphism
    \[ \quo{i^{2,!}} \simeq \quo{p_{0,!}^2} \]
    and similarly
    \[ i_b^! \simeq (\quo{p} \times \quo{p})_!. \]
    We get
    \begin{align*}
      i_b^! c_!\, \quo{j_{\Delta,*}}
      &\simeq (\quo{p} \times \quo{p})_! c_! \quo{j_{\Delta,*}}\\
      &\simeq d_!\, \quo{p_{0,!}^2}\, \quo{j_{\Delta,*}}\\
      &\simeq d_!\, \quo{i^{2,!}}\, \quo{j_{\Delta,*}}
    \end{align*}
    where $d$ is the diagonal of $\quo{S}$ as in the diagram above.
    But $\quo{i^{2,!}} \quo{j_{\Delta,*}} \simeq 0$ by base change (as $0 \in \A^2$ is contained in $\Delta$).

    \emph{Proof of \itemref{item:aouuhp/j}}.
    Consider the diagram of cartesian squares
    \[ \begin{tikzcd}[matrix xscale=0.3]
      \Gm\setminus\{1\}\ar[equals]{r}\ar[hookrightarrow]{d}{j'_1}
      & \quo{(\Gm \times \Gm\setminus\Delta)} \ar{r}\ar[hookrightarrow]{d}
      & \quo{(\A^1 \times \Gm\setminus\Delta)} \ar[equals]{r}\ar[hookrightarrow]{d}
      & \A^1 \setminus \{1\} \ar{r}\ar[hookrightarrow]{d}{j_1}
      & \quo{(\A^2\setminus\Delta)} \ar[hookrightarrow]{d}{\quo{j_\Delta}}
      \\
      \Gm \ar[equals]{r}\ar{d}{q}
      & \quo{(\Gm \times \Gm)} \ar{r}\ar[twoheadrightarrow]{d}
      & \quo{(\A^1 \times \Gm)} \ar[equals]{r}\ar[twoheadrightarrow]{d}
      & \A^1 \ar{r}\ar[twoheadrightarrow]{d}{f}
      & \quo{\A^2} \ar[twoheadrightarrow]{d}{c}
      \\
      S \ar[equals]{r}
      & \quo{\Gm} \ar[hookrightarrow]{r}
      & \quo{\A^1} \ar[equals]{r}
      & \quo{\A^1} \ar[hookrightarrow]{r}
      & \quo{\A^1} \times \quo{\A^1}
    \end{tikzcd} \]
    where we omit the subscripts $_S$ for simplicity.
    By base change we get
    \[
      (\quo{j} \times \quo{j})^* c_! \,\quo{j_{\Delta,*}} (\un)
      \simeq q_{0,!} j'_{1,*} (\un)
      \simeq \un[-1]
    \]
    where the second isomorphism follows easily from localization.

    Since $\quo{j} \times \quo{j} = j_b \circ j_a$ this isomorphism gives by adjunction a morphism
    \[ j_b^*(\sF) \to j_{a,*}(\un)[-1] \]
    in $\D(U)$ which we claim is invertible.
    Write $U$ as the union of the two opens $[\quo{(\A^1\times\Gm)}/\Gm]$ and $[\quo{(\Gm\times\A^1)}/\Gm]$, where $[-/\Gm]$ is the quotient by the scaling action on the second coordinate.
    Over either open this morphism restricts to the isomorphism
    \[ u_! j_{1,*}(\un) \simeq \quo{j}_*(\un)[-1] \]
    constructed in \propref{prop:j_*(1)}.
  \end{proof}



\bibliographystyle{halphanum}

\begin{thebibliography}{EHKSY}

  \bibitem[AKLPR]{virloc} D.~Aranha, A.\,A.~Khan, A.~Latyntsev, H.~Park, C.~Ravi, \textit{Virtual localization revisited}.  \arXiv{2207.01652} (2022).

  \bibitem[BH]{BachmannHoyoisNorms} T.~Bachmann, M.~Hoyois, \textit{Norms in motivic homotopy theory}. Astérisque~{\bf{425}} (2021).
  
  \bibitem[BJ]{BorisovJoyce} D.~Borisov, D.~Joyce, \textit{Virtual fundamental classes for moduli spaces of sheaves on Calabi-Yau four-folds}, Geom. Topol.~{\bf{21}} (2017), no.~6, 3231--3311.

  \bibitem[CD]{CisinskiDegliseBook} D.-C.~Cisinski, F.~Déglise, \textit{Triangulated categories of mixed motives}. Springer Monogr. Math.~(2019).
  
  \bibitem[CK]{CiocanFontanineKapranov} I.~Ciocan-Fontanine, M.~Kapranov, \textit{Virtual fundamental classes via dg-manifolds}.  Geom. Topol.~{\bf{13}} (2009), no.~3, 1779--1804.

  \bibitem[DG]{DrinfeldGaitsgoryCompact} V.~Drinfeld, D.~Gaitsgory, \textit{Compact generation of the category of D-modules on the stack of G-bundles on a curve}.  Camb. J. Math.~{\bf{3}} (2015), no.~1-2, 19--125.

  \bibitem[DFJK]{DFJKMinus} F.~Déglise, J.~Fasel, F.~Jin, A.\,A.~Khan, \textit{On the rational motivic homotopy category}.  J.~Éc.~Polytech., Math.~{\bf{8}} (2021), 533--583.

  \bibitem[EM]{EvensMirkovic} S.~Evens, I.~Mirković, \textit{Fourier transform and the Iwahori-Matsumoto involution}.  Duke Math. J.~{\bf{3}} (1997), 435--464.

  \bibitem[FYZ]{FYZ} T.~Feng, Z.~Yun, W.~Zhang, \textit{Modularity of higher theta series I: cohomology of the generic fiber}.  \arXiv{2308.10979} (2023).

  \bibitem[JY]{JY21} F.~Jin, E.~Yang, \textit{K\"{u}nneth formulas for motives and additivity of traces}.  Adv. Math.~{\bf{376}} (2021), paper no.~107446, 84 p.

  \bibitem[KK]{dimredcoha} A.\,A.~Khan, T.~Kinjo, \textit{3d cohomological Hall algebras for local surfaces}.  Available at: \url{https://www.preschema.com/papers/dimredcoha.pdf} (2023).

  \bibitem[KR]{equilisse} A.\,A.~Khan, C.~Ravi, \textit{Equivariant generalized cohomology via stacks}.  \arXiv{2209.07801} (2022).

  \bibitem[KS]{KashiwaraSchapira} M.~Kashiwara, P. Schapira, \textit{Sheaves on manifolds} (1990).

  \bibitem[Kha1]{KhanVirtual} A.\,A.~Khan, \textit{Virtual fundamental classes of derived stacks I}. \arXiv{1909.01332} (2019).

  \bibitem[Kha2]{Weaves} A.\,A.~Khan, \textit{Weaves}. Available at: \url{https://www.preschema.com/papers/weaves.pdf} (2023).

  \bibitem[LZ]{LiuZheng} Y.~Liu, W.~Zheng, \textit{Enhanced six operations and base change theorem for higher Artin stacks}. \arXiv{1211.5948} (2012).

  \bibitem[Lau]{Laumon} G.~Laumon, \textit{Transformation de Fourier homogène}.  Bull. Soc. Math. Fr.~{\bf{131}} (2003), no.~4, 527--551.

  \bibitem[MR1]{MirkovicRiche} I.~Mirković, S.~Riche, \textit{Linear Koszul duality. II: Coherent sheaves on perfect sheaves}.  J. Lond. Math. Soc., II. Ser.~{\bf{93}} (2016), no.~1, 1--24.

  \bibitem[MR2]{MirkovicRicheChern} I.~Mirković, S.~Riche, \textit{Linear Koszul duality and Fourier transform for convolution algebras}.  Doc. Math.~{\bf{20}} (2015), 989--1038.

  \bibitem[OT]{OhThomas} J.~Oh, R.\,P.~Thomas, \textit{Counting sheaves on Calabi-Yau 4-folds, I}.  Duke Math. J.~{\bf{172}} (2023), no.~7, 1333--1409.

  \bibitem[Tod]{TodaCatLocal} Y.~Toda, \textit{On categorical Donaldson-Thomas theory for local surfaces}.  \arXiv{1907.09076} (2019).

  \bibitem[Wan]{Wang} J.~Wang, \textit{A new Fourier transform}.  Math. Res. Lett.~\textbf{22} (2015), no.~5, 1541--1562.

\end{thebibliography}

Institute of Mathematics,
Academia Sinica,
Taipei 10617,
Taiwan

\end{document}